\documentclass[12pt,reqno]{amsart}
\usepackage{a4wide}
\numberwithin{equation}{section}

\usepackage{amsmath}
\usepackage{comment}
\usepackage{color}
\usepackage{txfonts}
\usepackage{amsfonts}
\usepackage{amsmath,amsthm,amssymb,amscd}
\usepackage{latexsym}
\usepackage{hyperref}
\usepackage[numbers,sort&compress]{natbib}
\usepackage{hypernat}
\allowdisplaybreaks

\numberwithin{equation}{section}

\newtheorem{theorem}{Theorem}[section]
\newtheorem{proposition}[theorem]{Proposition}

\newtheorem{lemma}[theorem]{Lemma}

\theoremstyle{definition}

\newtheorem{remark}[theorem]{Remark}

\newcommand{\va}{\varepsilon}

\newcommand{\la}{\lambda}

\def\r{\mathbb{R}}

\newcommand{\fenshi}{\frac{\alpha+\gamma-2\beta}{\alpha\gamma-\beta^2}}
\title{Circle-like concentrated solutions for two-component Bose-Einstein condensates}

\author{Qidong Guo \textsuperscript{1}, Qiaoqiao Hua \textsuperscript{2}, and Chongyang Tian  \textsuperscript{3}}

\address{ \textsuperscript{1}School of Mathematical Science, Anhui University, Hefei, 230601, P.R. CHINA}

\address{ \textsuperscript{2}School of Mathematical Sciences, University of Science and Technology of China, Hefei, 230026, P.R. CHINA}

\address{ \textsuperscript{3} School  of Mathematics and Statistics, Central  China Normal University, Wuhan 430079, P. R. China }

\email{qdguo@ahu.edu.cn}

\email{qqhua@ustc.edu.cn}

\email{cytian@mails.ccnu.edu.cn}

\begin{document}
	
\begin{abstract}
We investigate the normalized solutions of the following two-component Bose-Einstein condensates (BEC) system
\[
\left\{ 
\begin{aligned}
	-\Delta u + (\lambda+P(x))u &= \alpha u^3 +\beta uv^2, && \text{in } \mathbb{R}^2,\\
	-\Delta v + (\lambda+Q(x))v &= \gamma v^3 +\beta u^2 v, && \text{in } \mathbb{R}^2,
\end{aligned} 
\right.
\]
 with $L^2$-constraint $$\int_{\mathbb{R}^2}(u^2+v^2)\,dx = 1.$$
For any $\alpha>0$, $\gamma > 0$ and $\ \beta \in (-\sqrt{\alpha\gamma},0)\cup(0,\min \{\alpha,\gamma\})\cup \left(\max \{\alpha,\gamma\} , + \infty\right)$, we establish the existence of synchronized solutions concentrating on high-dimensional subsets of $\mathbb{R}^2$ by employing a finite-dimensional reduction method combined with some local Pohozaev identities. More precisely, we construct vector radial solutions that concentrate on circles when
$ \frac{\alpha + \gamma - 2\beta}{\alpha\gamma - \beta^2}$ tends to zero.
Our results fill the blank in the system for high-dimensional concentrated normalized solutions.\\
		
		\noindent{\bf Key words:} Bose-Einstein condensates; finite-dimensional reduction method; normalized solutions; synchronized solutions. \\
		
		\noindent\textbf{Mathematics Subject Classification:} 35A01 · 35B25 · 35J20 · 35J60.
	\end{abstract}
		\maketitle
	\section{Introduction}

In this paper, we	investigate the existence of normalized solutions to the following coupled nonlinear Schr\"odinger system
	\begin{equation}\label{eqmain}
		\begin{cases}
			-\Delta u + (\la+P(x)) u= \alpha u^3 + \beta  uv^2,\  &\text{ in } \ \r^{2},\\
			-\Delta v +( \la+Q(x)) v= \gamma v^3 + \beta  u^2v,\  &\text{ in } \ \r^{2},\\
            \int_{\r^2}(u^2+v^2)dx = 1,  
		\end{cases} 
	\end{equation}
	with $\alpha>0$, $\gamma>0$, $\beta\in\mathbb{R}$ and $\lambda$ being a Lagrangian multiplier.

It is well-known that the system \eqref{eqmain} arises in several important physical contexts. It serves as a model for Bose-Einstein condensates (BEC) with two interacting components, where $P(x)$ and $Q(x)$  represent the external potentials, see \cite{D.S., T. Lin, G.M, C.J.}. The parameters $\alpha>0$ and $\gamma>0$ indicate that the intraspecies interaction among atoms within each component is attractive, while $\beta\in\r$  represents the interspecies interaction between components. The sign of $\beta$ determines whether the interaction is attractive ($\beta<0$ ) or repulsive ($\beta>0$) \cite{Y. Guo}. Due to the interspecies interaction between the two components, two-component Bose-Einstein condensates exhibit more complex phenomena than single-component Bose-Einstein condensates. These additional interaction effects introduce new phenomena and nontrivial challenges, further driving the need for comprehensive and systematic studies of two-component Bose–Einstein condensates.

The groundbreaking experiments that first observed Bose-Einstein condensation in dilute alkali metal vapors were successfully achieved in 1995. With these experimental breakthroughs, mathematicians' interest in the Gross-Pitaevskii (GP) framework \cite{E. Gross}
\begin{equation}\label{eq:GP_time}
i\partial_t \psi(x,t) = -\Delta \psi(x,t) + V(x)\psi(x,t) - a|\psi(x,t)|^2\psi(x,t), \quad x \in \mathbb{R}^2,
\end{equation}
subject to the normalization condition $\int_{\mathbb{R}^2}|\psi(x,t)|^2\,dx = 1$, proposed by Gross and Pitaevskii in the 1960s, was reignited.  Seeking solutions to system \eqref{eq:GP_time} of the form $\psi(x,t) = u(x)e^{-i\mu t}$, where $\mu$ denotes the chemical potential and $u(x)$ is a time-independent function, reduces the system \eqref{eq:GP_time} to the following nonlinear problem about $(u,\mu)$,
\begin{equation}\label{eq:stationary}
-\Delta u + V(x)u = au^3 + \mu u, \quad \text{in } \mathbb{R}^2,
\end{equation}
with the constraint
\begin{equation}\label{eq:normalization}
\int_{\mathbb{R}^2} u^2 dx= 1,
\end{equation}
which is a single-component Bose-Einstein condensate of system \eqref{eqmain} if we take $\la=-\mu$.
A standard variational approach to construct solutions of \eqref{eq:stationary}--\eqref{eq:normalization} involves minimizing the associated energy functional
\begin{equation*}
\label{eq:minimization}
I_a := \inf\left\{ \frac{1}{2}\int_{\mathbb{R}^2}\left(|\nabla u|^2 + V(x)u^2\right) dx - \frac{a}{4}\int_{\mathbb{R}^2}u^4dx : u \in H^1(\mathbb{R}^2), \int_{\mathbb{R}^2}u^2 dx = 1 \right\}.
\end{equation*}
 The asymptotic behaviors and local uniqueness of ground states to \eqref{eq:stationary}--\eqref{eq:normalization} have been studied in \cite{YGCL,YGRS,YGXZ} and the references therein. Any solutions with energy strictly greater than ground states are often called excited states which have been obtained by constructing multi-peak solutions in \cite{GHLY, LPWY} and radial solutions concentrating on spheres in \cite{CVPDE}, respectively. Moreover, Guo and Tian in \cite{JDE} prove the existence of solutions to the following general Schr\"odinger equation 
\begin{equation*}
-\Delta u + V(x)u = au^p + \mu u, \quad \text{in } \mathbb{R}^n,
\end{equation*}
with the constraint
\begin{equation*}
\int_{\mathbb{R}^n} u^2 dx= 1,
\end{equation*} where $n\geq2$ and $p>1$. 
Denoting $\epsilon=\frac{1}{\sqrt{-\mu}}$,
the solutions obtained in \cite{JDE} concentrate on the sphere $|x|=t_\epsilon$, with $t_\epsilon$ being the non-degenerate critical point of 
\[M_\epsilon(r):=\epsilon^{2(n-2)}r^{n-1}(1+\epsilon^2V(r))^{\frac{p+3}{2(p-1)}}.\]

Problems with the fractional Laplacian have been studied in \cite{GQWCH}. Guo, Wang and Yang have established results concerning the existence and local uniqueness of normalized $k$-peak solutions to the following fractional nonlinear Schr\"odinger equation
\[
\begin{cases}
(-\Delta)^s u + V(x)u = a u^p + \mu u, & x \in \mathbb{R}^N, \\
u \in H^s(\mathbb{R}^N),
\end{cases}
\]
under the mass constraint
\[
\int_{\mathbb{R}^N} u^2(x)\,dx = 1,
\]
where \(s \in (0,1)\), \(p \in (1, 2_s^* - 1)\), and \(2_s^* = \frac{2N}{N-2s}\) denotes the fractional critical Sobolev exponent.
For more concentration phenomena about single-component BEC or without $L^2$-constraint, we can refer to \cite{CMP,CPAM,KT,Duke,MY} and the references therein.

What is more, the existence, uniqueness and asymptotic behaviors of the ground state to system \eqref{eqmain} are proved in \cite{YGSLJWXZ,YGSLJW} by variational methods. We denote \begin{equation}
    \label{a***}
a^*=\int_{\r^2}w^2 dx,
\end{equation}
where $w$ is the unique positive radial solution of $-\Delta w+w=w^3$, in $\r^2$. Guo and Yang \cite{JDE1} use the finite dimensional reduction method to obtain the existence of peak solutions. In precise, they proved that,
as $\frac{\alpha+\gamma-2\beta}{\alpha\gamma-\beta^2}\to\frac{1}{ka^*}$, system \eqref{eqmain} has $k$-peak solutions concentrating on $k$ common critical points of $P(x)$ and $Q(x)$. However, the existence of high-dimensional concentrated solutions for system \eqref{eqmain} remains an open problem. 

In this paper, we focus on the following two questions:

\textbf{Question 1.} If problem \eqref{eqmain} exists a high-dimensional concentrated solution, then what necessary conditions will parameters $\alpha,\beta,\gamma$ and potentials $P,Q$ satisfy?

\textbf{Question 2.} Under above suitable conditions, does there exists a high-dimensional concentrated solution to problem \eqref{eqmain}?

	To analyze \textbf{Question 1}, we first consider the simple case $P(x)=Q(x)=0$ and introduce 
	\begin{equation}
	    \label{va}
\va:=\frac{\alpha+\gamma-2\beta}{\alpha\gamma-\beta^2}.
	\end{equation}
	Let $u(x)=(\frac{\lambda(\gamma-\beta)}{\alpha\gamma-\beta^2})^{\frac{1}{2}}w( \lambda^{\frac{1}{2}} x)$ and $v(x)=(\frac{\lambda(\alpha-\beta)}{\alpha\gamma-\beta^2})^{\frac{1}{2}}w(\lambda^{\frac{1}{2}} x)$. Then $(u,v)$ satisfies the following sysytem
	\[
	\begin{cases}
		-\Delta u + \la u= \alpha u^{3} + \beta uv^{2},\  &\text{ in } \ \r^{2},\\
		-\Delta v + \la v= \gamma v^{3} + \beta u^{2}v,\  &\text{ in } \ \r^{2},\\
	\end{cases} 
	\]
	and
	\[
	\int_{\mathbb R^2} u^2 dx=
	\frac{\gamma-\beta}{\alpha\gamma-\beta^2}\int_{\mathbb R^2} w^2 dx,
	\]
	\[
	\int_{\mathbb R^2} v^2 dx= \frac{\alpha-\beta}{\alpha\gamma-\beta^2}\int_{\mathbb R^N}w^2 dx.
	\]
	By the constraint in \eqref{eqmain}, we see that 
	\[
	\va \int_{\mathbb R^2} w^2 dx =1.
	\]
	Recall $a^*=\int_{\mathbb R^2} w^2 dx$ defined in \eqref{a***}. This shows that as $\lambda\to+\infty$, there must be $\va\to\frac{1}{a^*}$. Similarly, if one considers $k$-peak solutions, then there will be $\va\to\frac{1}{ka^*}$. This is consistent with the conclusion in \cite{JDE1}. In this paper, we are devoted to find a solution concentrating on a high dimensional subset of $\mathbb{R}^2$. Intuitively, the number of peaks is infinite. So, the parameter $\va$ should be close to $0$.

		Next, we consider non-trivial potential functions $P(x), Q(x)$. If  system \eqref{eqmain} possesses a radially symmetric solution concentrated on a circle, what necessary conditions should the potential functions $P(x), Q(x)$ satisfy? Inspired by the literature \cite{CMP}, firstly, the potentials should be radial functions, i.e., $P(x) = P(|x|), Q(x) = Q(|x|) $. Secondly, the function determined by $P(x), Q(x) $
		\begin{equation}
			\label{Mdingyi}
			M_\la(r):=r\left[1+\frac{1}{\la}\Big(\frac{\gamma-\beta}{\alpha+\gamma-2\beta} P(r) + \frac{\alpha-\beta}{\alpha+\gamma-2\beta} Q(r) \Big)\right]^{\frac{3}{2}}
		\end{equation}
		should have a sequence of critical points $r_{\lambda}$. Intuitively, this function is a generalization of the function
		\[
		M(r) = r^{N-1} V^{\sigma}(r)
		\]
		 in the literature \cite{CMP}, where $N$ is the spatial dimension, $\sigma = \frac{p+1}{p-1} - \frac{1}{2} $, and $p > 1$ is the highest exponent of the nonlinear terms.
		At this moment, we basically answer \textbf{Question 1}. We will state the detailed conclusions in Theorem \ref{diyigedingli} and provide a rigorous proof in Section \ref{biyaoxing}.
		
		Now let us introduce some notations. Throughout this paper, we define
		a norm in $H^{1}(\mathbb{R})\times H^{1}(\mathbb{R})$ by
		\[ \|(u,v)\|_{\la}^2 :=  \|u\|_{1,\la}^2 + \|v\|_{2,\la}^2 , \]
		where
		\[ \|u\|_{1,\la} = \Bigl(\int_{0}^{+\infty}r \bigl(|u'|^{2} +(\la+P(r)) u^{2} \bigr) dr \Bigr)^{\frac{1}{2}}, \quad
		\|v\|_{2,\la} = \Bigl(\int_{0}^{+\infty}r \bigl(|v'|^{2} +(\la+Q(r)) v^{2} \bigr) dr \Bigr)^{\frac{1}{2}},
		\]
		endowed with the inner product
		\[ \langle (u, v), (\phi,\psi)\rangle_{\la} =\int_{0}^{+\infty}r  \bigl( u'  \phi' +  v'  \psi' +(\la+P(r)) u \phi +(\la+Q(r)) v \psi \bigr) dr, \ \forall u, v, \phi, \psi \in H^{1}(\mathbb{R}).\]
		Let 
		\begin{equation*}
			\begin{array}{l}
				U_{r_{\la},\la}(r)= \sqrt{\la +P(r_\la)}
				U(\sqrt{\lambda+P(r_\la)} (r - r_{\la})),\\
				V_{r_{\la},\la}(r)= \sqrt{\la +Q(r_\la)}
				V(\sqrt{\lambda+Q(r_\la)} (r - r_{\la})),
			\end{array}
		\end{equation*} 
		where
		\begin{equation}\label{sol_lim}
			(U,V) =  \Bigg( \sqrt{\frac{\gamma-\beta}{\alpha\gamma-\beta^2}} w,  
			\sqrt{\frac{\alpha-\beta}{\alpha\gamma-\beta^2}} w
			\Bigg),
		\end{equation}
		is the unique radial positive solution of system
		\begin{equation}\label{eqlim}
			\begin{cases}
				- u'' +  u= \alpha u^{3} + \beta uv^{2},\  &\text{ in } \ \r,\\
				- v'' +  v= \gamma v^{3} + \beta u^{2}v,\  &\text{ in } \ \r,\\
			\end{cases} 
		\end{equation}
		with $\beta \in (-\sqrt{\alpha\gamma},0)\cup(0,\min\{\alpha,\gamma\})\cup (\max\{\alpha,\gamma\},+\infty)$ and 
		$w\in H^1(\r)$ is the unique radial positive solution of 
		\begin{equation}\label{w}
			- w''+ w=  w^{3}, \  \text{ in } \ \r. 
		\end{equation}
		Then $(U_{r_{\la},\la},V_{r_{\la},\la})$ satisfy the following system
		\begin{equation}\label{eqmain_approxi}
			\begin{cases}
				- u'' + (\la+P(r_\la)) u= \alpha u^3 + \beta  uv^2,\  &\text{ in } \ \r,\\
				- v'' +( \la+Q(r_\la)) v= \gamma v^3 + \beta  u^2v,\  &\text{ in } \ \r.\\
			\end{cases} 
		\end{equation}

		Our first result states that
		\begin{theorem}
			\label{diyigedingli}
			Let $\alpha>0$, $\gamma>0$, $\beta \in (-\sqrt{\alpha\gamma},0)\cup(0,\min \{\alpha,\gamma\})\cup \left(\max \{\alpha,\gamma\} , + \infty\right)$, $P(x)=P(|x|)$ and $Q(x)=Q(|x|)$ are $C^1$ functions with $P,Q,P^\prime,Q^\prime$ being bounded. Suppose 
			 $(u_\la,v_\la)$ is a concentrated radial solution of \eqref{eqmain} with the form
			\begin{equation}
				\label{tongshi}
				(u_\la(r),v_\la(r))=\big(U_{r_{\la},\la}(r),V_{r_{\la},\la}(r)\big)+(\omega_{1,\la}(r), \omega_{2,\la}(r)),
			\end{equation}
			and 
			\begin{equation}
				\label{yuxiang}\|(\omega_{1,\la}(r), \omega_{2,\la}(r))\|_\la=o\Big(\|\big(U_{r_{\la},\la}(r),V_{r_{\la},\la}(r)\big)\|_\la\Big)=o(\la^{\frac{3}{4}}r_{\la}^{\frac{1}{2}}).
			\end{equation}
			 Then it must holds that
			\[\va=\fenshi \to0,\,\,r_\la\to+\infty.\]
			Moreover, $r_\la$ satisfies
			\[M_\la^\prime(r_\la)=o(1)\,\,\text{or}\,\, (\gamma-\beta)P^\prime(r_\la)+(\alpha-\beta)Q^\prime(r_\la)=o(1).\]
		\end{theorem}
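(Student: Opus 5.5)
We plan to extract the three conclusions from two integral identities satisfied by any solution: the mass constraint, and a local Pohozaev identity in the radial variable. Throughout, write $\mu_1=\lambda+P(r_\lambda)$ and $\mu_2=\lambda+Q(r_\lambda)$. The norm $\|(U_{r_\la,\la},V_{r_\la,\la})\|_\la\sim \la^{3/4}r_\la^{1/2}$ appearing in \eqref{yuxiang} already presupposes $\la\to+\infty$, so that the profiles concentrate at scale $\la^{-1/2}$ around $r_\la$. We also use that $r_\la\gg\la^{-1/2}$, so the weight $r$ is essentially constant, equal to $r_\la$, on the support of the bump.

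\emph{Step 1: mass identity and $r_\la\to\infty$, $\va\to0$.} Write the constraint as $2\pi\int_0^\infty r(u_\la^2+v_\la^2)\,dr=1$ and insert \eqref{tongshi}. By \eqref{yuxiang}, the cross terms and the error terms are $o$ of the main term, since $\int r\omega^2\le \la^{-1}\|\omega\|^2$. The main term is computed after the change of variable $s=\sqrt{\mu_i}(r-r_\la)$:
\[
2\pi\int_0^\infty rU_{r_\la,\la}^2\,dr=2\pi\sqrt{\mu_1}\Big(r_\la\int_{\r}U^2\,ds+O(\la^{-1/2})\Big),
\]
and similarly for $V$, with exponentially small tails. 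By \eqref{sol_lim}, $\int_{\r} U^2+\int_{\r}V^2=\va\int_{\r} w^2$. This gives $2\pi\va\, r_\la\sqrt{\la}\,\|w\|_{L^2(\r)}^2(1+o(1))=1$, hence $\va\,r_\la\sqrt\la\to(2\pi\|w\|^2_{L^2(\r)})^{-1}$. It then remains to show $r_\la\to\infty$, which forces $\va\to0$. We obtain this from the Pohozaev balance in Step 2: if $r_\la$ stayed bounded, the dominant term $\la^{3/2}$ coming from the curvature/weight derivative could not be cancelled by the potential terms, which are only of order $\la^{1/2}r_\la$. If instead boundedness of $r_\la$ is contradicted through the mass, we would argue case by case on a subsequence.

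\emph{Step 2: local Pohozaev identity.} We multiply the $u$-equation by $r\,u_\la'$ and the $v$-equation by $r\,v_\la'$, add them, and integrate over $(0,\infty)$ (or over a neighbourhood $(r_\la-\delta,r_\la+\delta)$, with exponentially small boundary terms). Because the coupling is $\beta u^2v^2$, the nonlinearity integrates to $\frac{d}{dr}F$ with $F=\frac\alpha4u^4+\frac\gamma4v^4+\frac\beta2u^2v^2$. This yields
\[
\int_0^\infty\Big[\tfrac12(u'^2+v'^2)\cdot 0\ \text{(1D: } (ru'^2/2)'=\ldots)\Big]\,dr,
\]
that is, an identity of the form
\[
-\tfrac12\!\int (u'^2+v'^2)+\tfrac12\!\int\big((\la+P)u^2+(\la+Q)v^2\big)+\tfrac12\!\int r\,(P'u^2+Q'v^2)-\int F=0,
\]
obtained from the $r$-weight derivative. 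We combine it with the identity obtained by testing the equations with $u_\la,v_\la$ and with $r u_\la, r v_\la$ to eliminate the leading terms. The cleaner route is the weighted identity $\frac{d}{dr}$ of $M$: we test with $u_\la'$ and $v_\la'$ directly (the translational Pohozaev identity),
\[
\int_0^\infty \tfrac1{2r}(u'^2+v'^2)\,r\ldots
\]
and in practice we compute
\[
\int_0^\infty\Big(-\tfrac12(u'^2+v'^2)+\tfrac12(\la+P)u^2+\tfrac12(\la+Q)v^2-F\Big)dr+\tfrac12\int_0^\infty r\,(P'u^2+Q'v^2)\,dr=0 .
\]

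\emph{Step 3: expansion.} Substituting \eqref{tongshi} and rescaling, the first integral equals $\frac{\sqrt\la}{2}\times$ a combination of $\int w'^2,\int w^2,\int w^4$, with corrections from $P(r_\la)/\la$ and $Q(r_\la)/\la$. Using the 1D identities $\int w'^2=\frac13\int w^2$ and $\int w^4=\frac43\int w^2$, this combination equals $c\,\va\,\la^{1/2}\big[1+\frac{3}{2\la}\,\frac{(\gamma-\beta)P+(\alpha-\beta)Q}{\alpha+\gamma-2\beta}(r_\la)\big]+\ldots$. The second integral equals $c'\la^{1/2}r_\la\big[(\gamma-\beta)P'+(\alpha-\beta)Q'\big](r_\la)/(\alpha\gamma-\beta^2)$. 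Dividing by $r_\la\la^{1/2}$ and comparing with $M_\la'(r)=[\cdot]^{3/2}+\frac32 r[\cdot]^{1/2}\la^{-1}(\ldots)'$, we obtain $M_\la'(r_\la)=o(1)$. Alternatively, when the first term is negligible (which happens exactly when $r_\la\to\infty$ fast relative to $\la$), the conclusion reads $(\gamma-\beta)P'+(\alpha-\beta)Q'=o(1)$.

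\emph{Main obstacle.} The hardest part is controlling the contribution of the error $\omega$ in the Pohozaev identity to order $o(\la^{1/2})$ using only \eqref{yuxiang}. Quadratic terms in $\omega$ are fine, but linear terms must be handled by exploiting that $(U,V)$ solves \eqref{eqmain_approxi}, so that the linear terms reduce to $\int r(P(r)-P(r_\la))U\omega$, which is small via $|P(r)-P(r_\la)|\le C|r-r_\la|$. We expect the quantitative bookkeeping here, namely showing the resulting errors are $o(1)$ after dividing by $\la^{1/2}r_\la$, to be the delicate step, together with making the dichotomy in the conclusion precise.
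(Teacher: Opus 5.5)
You have the right ingredients (mass constraint, a Pohozaev-type balance, then a dichotomy on $r_\la/\la$, as in the paper), but two steps fail as written.

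First, you never prove $\sqrt{\la}\,r_\la\to+\infty$; you simply assume $r_\la\gg\la^{-1/2}$. This is not among the hypotheses, and everything else rests on it. It is needed for the mass expansion $2\pi\va\sqrt{\la}\,r_\la\int_{\r}w^2\,(1+o(1))=1$. It is also needed for every expansion in Steps 2--3 in which you freeze the weight at $r_\la$ and replace $\int_{-\sqrt{\la}r_\la}^{\infty}$ by $\int_{\r}$. In particular, deriving $r_\la\to\infty$ (and hence $\va\to0$) from the Pohozaev balance is circular, because that balance is only available once $\sqrt{\la}\,r_\la\to\infty$ is known. The paper proves it independently in Lemma~\ref{radialprop}:
test the equations with $ru_\la,rv_\la$; the term $-\frac1r u_\la'$ then cancels exactly, giving \eqref{identity1};
replace $(u_\la,v_\la)$ by the profile;
compare with the half-line identity obtained by testing \eqref{eqmain_approxi} with $rU_{r_\la,\la},rV_{r_\la,\la}$.
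The discrepancy is the boundary contribution $\frac12\big(U_{r_\la,\la}^2+V_{r_\la,\la}^2\big)(0)\approx\frac{\la}{2}(U^2+V^2)(\sqrt{\la}\,r_\la)$. This must be $o(\la^{3/2}r_\la)$, which is impossible if $\sqrt{\la}\,r_\la$ stays bounded. Once this is known, the mass identity alone gives $\va\to0$.

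Second, the identity you finally display is the Pohozaev identity for the flat operator $-u''$. Testing $-\frac1r u_\la'$ against $ru_\la'$ contributes $-\int_0^\infty u_\la'^2\,dr$, and you dropped it. With your other terms unchanged, the correct identity has $+\frac12\int_0^\infty(u_\la'^2+v_\la'^2)\,dr$, not $-\frac12$. This is not cosmetic: with your sign, the order-$\la^{3/2}$ terms cancel identically. Indeed, $-\frac12\int_{\r}(U'^2+V'^2)+\frac12\int_{\r}(U^2+V^2)-\int_{\r}F(U,V)=0$ is just the integrated first integral of \eqref{eqlim}, with your $F$.

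The lost term is exactly the curvature contribution that produces the $1$ in $M_\la'$. So Step 3's claim that the combination equals $c\,\va\la^{1/2}[1+\cdots]$ does not follow from your identity. The scale is also off: these unweighted integrals are of order $\va\la^{3/2}$, and it is the comparison of $\va\la^{3/2}$ with $\la^{1/2}r_\la$ that produces the factor $r_\la/\la$ in $M_\la'$. With the correct sign the leading balance is
\[
\va A\la^{3/2}+\frac{3A}{2(\alpha\gamma-\beta^2)}\big[(\gamma-\beta)P'(r_\la)+(\alpha-\beta)Q'(r_\la)\big]\la^{1/2}r_\la=o\big(\la^{3/2}\big)+o\big(\la^{1/2}r_\la\big).
\]
After division by $A\la^{3/2}/(\alpha\gamma-\beta^2)$, this is the paper's \eqref{2-13}. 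The paper reaches it with the multiplier $r^2u_\la'$, for which the gradient terms cancel exactly in dimension two, combined with \eqref{identity1}.

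Your corrected multiplier $ru_\la'$ is a legitimate alternative that skips that combination step. However, it produces unweighted integrals such as $\int_0^\infty \omega_{1,\la}'^2\,dr$, which the $r$-weighted norm does not control near $r=0$, so you would need the decay estimates of Lemma~\ref{jieyoujie}. Conversely, the linear terms in $\omega$ that you flag as the main obstacle are handled in the paper simply by Cauchy--Schwarz and \eqref{yuxiang}, with no reduction via \eqref{eqmain_approxi}.
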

\begin{remark}
	In \eqref{tongshi}, $r_\lambda\in\mathbb{R}$ is the common maximum point of $u_\lambda$ and $v_\lambda$. We call these solutions \textit{synchronized solutions}. On the other hand, if the solutions concentrate at different points, we call them \textit{segregated solutions}, see \cite{QGQH, GQZCX, WLSP} for example.
\end{remark}

	As for \textbf{Question 2},  what we need to answer is that whether the solution will exhibit high-dimensional concentration under suitable conditions.
	In order to prove the existence of solutions to system \eqref{eqmain}, we will apply the Lyapunov-Schmidt reduction method.  However, there are two key issues that need to be resolved. One is to show the invertibility of the associated linearized operator. The other is to prove the solvability of the Lagrangian multiplier $\la$.
	The key to deal with the first issue lies in the study of the non-degeneracy of solutions to the limit problem \eqref{eqlim}.
	In terms of the high dimensional case
		\begin{equation}\label{ssss}
		\begin{cases}
			-\Delta u+  u= \alpha u^{3} + \beta uv^{2},\  &\text{ in } \ \r^N,\\
			- \Delta v+  v= \gamma v^{3} + \beta u^{2}v,\  &\text{ in } \ \r^N,\\
		\end{cases} 
	\end{equation}
	one can refer to  \cite{Ambrosetti2006,Bartsch2006,T. Lin,Sirakov2007} for the existence of ground states and bound states. In \cite{Wei2012}, J. Wei and W. Yao 
	proved that the solutions to system \eqref{ssss} are unique for sufficiently small $\beta > 0$, 
	while the solution of \eqref{ssss} is a unique positive radial solution for $\beta > \max\{\alpha, \gamma\}$.
For $N\leq3$, Dancer and Wei studied the spike solutions of the system \eqref{ssss}
with perturbation of the terms $\Delta u, \Delta v$ and gave a nondegenerate result with $\beta>0$ (refer to Theorem 1.3 in \cite{Dancer2009}). 
Later, Peng and Wang  in \cite{SPZW} improved this nondegenerate result for a wider range of $\beta$ when $N=3$.
We similarly rewrite the nondegenerate result about \eqref{sol_lim} as follows. 
\begin{lemma}[cf. Proposition 2.3 in \cite{SPZW}] \label{beta}
	There exists a decreasing sequence
	$\{\beta_k\}  \subset (-\sqrt{\alpha\gamma}, 0) $ with
	$\beta_k\to -\sqrt{\alpha\gamma}$ as $k\to\infty$ such that for $\beta\in (-\sqrt{\alpha\gamma}, 0) \cup (0,\min\{\alpha,\gamma\}) \cup
	(\max\{\alpha,\gamma\}, \infty)$ and $\beta \neq \beta_k$ for any $k$, $(U, V )$ is non-degenerate for the system \eqref{eqlim}
	in $H^1(\r)\times H^1(\r)$ in the sense that the kernel is given by span$\{(\theta(\beta)\frac{\partial W}{\partial r} , \frac{\partial W}{\partial r} )\}$, where $\theta(\beta) \neq 0$.
\end{lemma}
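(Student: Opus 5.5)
We need to prove the nondegeneracy lemma for the one-dimensional system \eqref{eqlim}. The plan is to diagonalize the linearized operator at $(U,V)$ using the fact that $(U,V)=(a w, b w)$ with $a=\sqrt{\frac{\gamma-\beta}{\alpha\gamma-\beta^2}}$ and $b=\sqrt{\frac{\alpha-\beta}{\alpha\gamma-\beta^2}}$. The admissible range of $\beta$ is exactly what makes $a,b$ real and positive, and one checks that $\alpha a^2+\beta b^2=1=\gamma b^2+\beta a^2$.

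\textbf{Writing the linearized system as a matrix problem.} Linearizing at $(U,V)$ gives
\[
-\phi''+\phi=w^2\big(3\alpha a^2\phi+\beta b^2\phi+2\beta ab\,\psi\big),\qquad -\psi''+\psi=w^2\big(3\gamma b^2\psi+\beta a^2\psi+2\beta ab\,\phi\big).
\]
This has the form $-\Phi''+\Phi=w^2 A\Phi$ with $\Phi=(\phi,\psi)^T$ and a constant symmetric matrix
\[
A=\begin{pmatrix}1+2\alpha a^2 & 2\beta ab\\ 2\beta ab & 1+2\gamma b^2\end{pmatrix}.
\]
Here I used $3\alpha a^2+\beta b^2=1+2\alpha a^2$.

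\textbf{Identifying the eigenvalues of $A$.} The vector $(a,b)$ is an eigenvector with eigenvalue $3$:
\[
(1+2\alpha a^2)a+2\beta ab^2=a+2a(\alpha a^2+\beta b^2)=3a,
\]
and the second component works the same way. Since $\operatorname{tr}A=2+2(\alpha a^2+\gamma b^2)$, the other eigenvalue is
\[
\mu=\operatorname{tr}A-3=2(\alpha a^2+\gamma b^2)-1,
\]
with eigenvector $(b,-a)$. Substituting $a,b$ gives
\[
\alpha a^2+\gamma b^2=\frac{\alpha\gamma-2\alpha\beta+\alpha\gamma-\gamma\beta}{\alpha\gamma-\beta^2}\cdot\ldots,
\]
and more precisely $\mu=\frac{\alpha\gamma+\beta^2-2\beta(\alpha+\gamma)+2\alpha\gamma}{\alpha\gamma-\beta^2}$ after simplification, an explicit rational function of $\beta$.

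\textbf{Decoupling and the first component.} Write $\Phi=f\,(a,b)+g\,(b,-a)$. The system then decouples into the scalar equations
\[
-f''+f=3w^2f,\qquad -g''+g=\mu w^2 g\quad\text{on }\mathbb{R}.
\]
The first equation is the linearization of \eqref{w}, whose $H^1$ kernel is span$\{w'\}$ by the classical one-dimensional nondegeneracy (an ODE argument with a second solution that grows).

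\textbf{The second component and the exceptional values $\beta_k$.} The equation for $g$ is a weighted eigenvalue problem for the Pöschl–Teller potential $w^2=2\operatorname{sech}^2$. It has $H^1$ solutions exactly at a discrete set of values $\mu=\mu_j=\frac{j(j+1)}{2}$, $j\ge0$. These values correspond to $\mu=3$ with $j=2$ giving ... so we must rule out $\mu\in\{\mu_j\}$. This is the main obstacle: the condition $\mu(\beta)=\mu_j$ must be shown to fail on $(0,\min)\cup(\max,\infty)$ and to hold only along a sequence $\beta_k\to-\sqrt{\alpha\gamma}$ in the negative range.

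I would first analyze the monotonicity and range of $\mu(\beta)$ on each interval. In the positive ranges I expect $\mu<1$ and $\mu\neq0$, so only $j=0$, i.e. $\mu=0$, could be a problem, and that is excluded since $\beta\neq0$ there. As $\beta\to-\sqrt{\alpha\gamma}$, $\mu\to+\infty$ monotonically, so the preimages of the $\mu_j$ form the decreasing sequence $\beta_k$.

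Away from these values $g=0$. Hence $\Phi=w'(a,b)$, which is the claimed kernel with $\theta=a/b\neq0$.
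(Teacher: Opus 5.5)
Your skeleton is sound, and it is more than the paper provides: the paper gives no argument and only transfers Proposition 2.3 of Peng--Wang from $\mathbb{R}^3$. With $(U,V)=(aw,bw)$ the linearization is $-\Phi''+\Phi=w^2A\Phi$. The eigenpairs of $A$ are $3,(a,b)$ and $\mu,(b,-a)$ with $\mu=\frac{3\alpha\gamma-2\beta(\alpha+\gamma)+\beta^2}{\alpha\gamma-\beta^2}$. The equation $-g''+g=\mu w^2g$ with $w^2=2\operatorname{sech}^2 x$ is explicitly solvable.

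The gap is in the step that decides which $\beta$ are degenerate, and it has two parts.

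First, the exceptional set is $\{\frac{j(j+1)}{2}:j\geq1\}=\{1,3,6,10,\dots\}$, not $j\geq0$. The lowest weighted eigenvalue is $1$, with eigenfunction $w>0$. So for every $\mu<1$, in particular $\mu=0$, testing with $g$ gives $\int(g'^2+g^2)=\mu\int w^2g^2$ and forces $g=0$. Your stated reason for discarding $\mu=0$, namely $\beta\neq0$, is a non sequitur: $\mu=0$ has nothing to do with $\beta=0$. In fact $\mu=0$ occurs at $\beta=\alpha+\gamma+\sqrt{\alpha^2-\alpha\gamma+\gamma^2}\in(\max\{\alpha,\gamma\},\infty)$. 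With your list of exceptional values, you would have produced a degenerate point in an interval where the lemma asserts non-degeneracy.

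Second, the claim $\mu<1$ is false on $(0,\min\{\alpha,\gamma\})$. There $1<\mu<3$; for example $\alpha=\gamma=1$ gives $\mu=\frac{3-\beta}{1+\beta}$. So $\mu$ lies exactly between the two dangerous values $1$ and $3$, and your argument never compares $\mu$ with them. This interval is therefore not covered at all.

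What is missing are two identities:
$\mu-1=\frac{2(\alpha-\beta)(\gamma-\beta)}{\alpha\gamma-\beta^2}$ and $\mu-3=\frac{2\beta(2\beta-\alpha-\gamma)}{\alpha\gamma-\beta^2}$.
They give:
\begin{itemize}
\item $\mu<1$ on $(\max\{\alpha,\gamma\},\infty)$, hence $g=0$ there;
\item $1<\mu<3$ on $(0,\min\{\alpha,\gamma\})$, with $\mu=3$ and $g=w'$ exactly at the excluded decoupled value $\beta=0$;
\item $\mu>3$ on $(-\sqrt{\alpha\gamma},0)$.
\end{itemize}

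On the last interval you must check the monotonicity you only ``expect''. We have $\mu'(\beta)=-\frac{2[(\alpha+\gamma)\beta^2-4\alpha\gamma\beta+\alpha\gamma(\alpha+\gamma)]}{(\alpha\gamma-\beta^2)^2}<0$ for $\beta<0$, since each term in the bracket is then positive. Also $\mu\to3$ as $\beta\to0^-$ and $\mu\to+\infty$ as $\beta\to-\sqrt{\alpha\gamma}^{+}$. Hence $\beta_k$ is the unique root of $\mu(\beta)=\frac{(k+2)(k+3)}{2}$ (that is, $6,10,15,\dots$), which is a decreasing sequence tending to $-\sqrt{\alpha\gamma}$.

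With these corrections, your conclusion $\ker=\operatorname{span}\{(aw',bw')\}$, $\theta=a/b\neq0$, holds for every other admissible $\beta$.
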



 \noindent The second issue is to show that there exists a $\lambda_{\varepsilon}$ tending to $+\infty$ for any $ \varepsilon$ close to $0$. Based on many research background on normalization problems, such as \cite{GHLY,QGQH}, we only need to obtain the relationship between $\lambda$ and $\varepsilon$ through some analytical tools such as Pohozaev identities, and then we can prove that the system \eqref{eqmain} has radial solutions concentrated at $ r_{\varepsilon}$ which tends to $+\infty $. We will solve the relationship between $\lambda$ and $\varepsilon$ by calculating the $L^2$-energy of the constructed solution (see Section \ref{cz2}). Then the vector solution $(u_\la,v_\la)$ can be rewritten as $(u_\va,v_\va)$. Precisely,
 \begin{equation}
 	\label{tongshi1}
 	(u_\va(r),v_\va(r))=\big(U_{r_{\va},\la_\va}(r),V_{r_{\va},\la_\va}(r)\big)+\omega_\va,
 \end{equation}
 and 
 \begin{equation}
 	\label{yuxiang1}\|\omega_\va\|_{\la_\va}=o\Big(\|\big(U_{r_{\va},\la_\va}(r),V_{r_{\va},\la_\va}(r)\big)\|_{\la_\va}\Big)=o(\la_\va^{\frac{3}{4}}r_{\va}^{\frac{1}{2}}),
 \end{equation}
 where $r_\va:=r_{\la_\va}$, $\omega_\va(r):=(\omega_{1,\va}(r), \omega_{2,\va}(r)):= (\omega_{1,\la_\va}(r), \omega_{2,\la_\va}(r))$.

In order to construct concentrated solutions to system \eqref{eqmain} when $\va>0$ is sufficiently small, we define 
\begin{equation}
    \label{Mdingyi1}
M_\va(r):=M_{\la_\va}(r) = r\left[1+\frac{1}{\la_\va}\Big(\frac{\gamma-\beta}{\alpha+\gamma-2\beta} P(r) + \frac{\alpha-\beta}{\alpha+\gamma-2\beta} Q(r) \Big)\right]^{\frac{3}{2}},
\end{equation}
and assume that the function $M_\va(r)$ has a sequence of non-degenerate critical points which tends to infinity (see \cite{CVPDE,JDE,CMP}). Namely, there exist a sequence $\{y_\va\}\subset \r^+$ and some positive constants $C_0$, $C_1$ and $C_2$ such that 
\begin{equation}
    \label{M1}
    M_\va^\prime(y_\va)=0\,\,and \,\,|M_\va^{\prime\prime}(y_\va)|>C_0, \text{ for all } \va\leq \va_0,
\end{equation}
and
\begin{equation}
    \label{M2}
   C_1\la_\va \leq y_\va\leq C_2\la_\va,
\end{equation}
where $y_\va$ denotes $y_{\la_\va}$.

We answer \textbf{Question 2} by proving the following theorem.
\begin{theorem}
\label{diergedingli}
Let $\alpha>0$, $\gamma>0$, $\beta$ satisfy the condition in Lemma \ref{beta}, $P(x)=P(|x|)$ and $Q(x)=Q(|x|)$ are $C^1$ functions with $P,Q,P^\prime,Q^\prime$ being bounded. If $M_\va(r)$ satisfies \eqref{M1} and \eqref{M2}, then there exists an $\va_0>0$ such that for any $\va\in(0,\va_0]$, problem \eqref{eqmain} has a vector radial solution $(u_{\varepsilon}, v_{\varepsilon})$ of the form \eqref{tongshi1}-\eqref{yuxiang1} satisfying that
\[
\la_\va \in \left(\left( 8A  \pi \varepsilon\right)^{-\frac{1}{\frac{3}{2} + \theta}}, \left( 4A  \pi \varepsilon\right)^{-\frac{1}{\frac{3}{2} - \theta}} \right) , \ 
|r_\va-y_\va|=O\Big(\la_{\va}^{-\frac{1}{2}+\theta} \Big) , 
\text{ and } \|\omega_\va\|_{\la_\va} = O\Big( \la_{\va}^{-\frac{1}{4}+\frac{\theta}{2}}\Big),
\]
where $\theta>0$ is a small constant.
\end{theorem}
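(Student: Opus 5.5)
We will prove Theorem \ref{diergedingli} by a Lyapunov--Schmidt reduction in the radial variable with $\la$ kept as a free parameter; the constraint is imposed only at the end. Fix a large $\la$ and a point $r$ with $|r-y_\va|\le \la^{-\frac12+\theta}$, and look for radial solutions $(u,v)=(U_{r,\la},V_{r,\la})+\omega$ of the first two equations of \eqref{eqmain}. Here $\omega$ lies in the space $E_{r,\la}$, which is the $\langle\cdot,\cdot\rangle_\la$-orthogonal complement of the approximate kernel $\big(\theta(\beta)\partial_r U_{r,\la},\partial_r V_{r,\la}\big)$ given by Lemma \ref{beta}. The functional is weighted by $r$ (the 2D measure in polar coordinates). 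Since $r\sim\la$ is huge, the term $\frac1r u'$ in the radial Laplacian and the variation $P(s)-P(r)$ over the concentration scale $|s-r|\sim\la^{-1/2}$ are small perturbations. We also multiply $U,V$ by a cutoff near $s=r$ so that everything lives away from the origin.

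\textbf{Step 1 (invertibility).} Show that the linearized operator $L_{r,\la}$, projected onto $E_{r,\la}$, satisfies $\|L_{r,\la}\omega\|\ge c\|\omega\|_\la$ uniformly. We argue by contradiction and blow up at scale $\la^{-1/2}$ around $r$. In the limit the weight $r$ factors out, and the limit problem is \eqref{eqlim} on $\mathbb R$. Its kernel is one-dimensional by Lemma \ref{beta}, which gives the contradiction. Away from $r$ the coercivity comes from the term $\la+P$.

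\textbf{Step 2 (error estimate and contraction).} Estimate the error $R=\nabla I(U_{r,\la},V_{r,\la})$. Its main pieces are $\frac1s U'_{r,\la}$ and $(P(s)-P(r))U_{r,\la}$, together with the analogous terms for $V$. Using $r\sim\la$ and the boundedness of $P'$ and $Q'$, each piece is bounded by roughly $\la^{-\frac14+\frac\theta2}$ in the weighted norm; the loss $\theta$ accommodates the displacement $|r-y_\va|$. The contraction mapping theorem then gives a unique $\omega_{r,\la}\in E_{r,\la}$ with $\|\omega_{r,\la}\|_\la=O(\la^{-\frac14+\frac\theta2})$, depending continuously on $(r,\la)$.

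\textbf{Step 3 (reduced equation in $r$).} What remains is the one-dimensional multiplier equation. We rewrite it as a local Pohozaev identity: multiply the equations by $(u'+\ldots)$, or equivalently test with $\partial_r$ and integrate over a neighborhood of $r$. This yields
\[
c_\beta\,\la^{-\frac12+\cdots}\,M_\la'(r)+\text{h.o.t.}=0 .
\]
The function $M_\la$ in \eqref{Mdingyi} arises exactly because the energy of the profile behaves like $r(\la+\bar P(r))^{3/2}$, where $\bar P=\frac{\gamma-\beta}{\alpha+\gamma-2\beta}P+\frac{\alpha-\beta}{\alpha+\gamma-2\beta}Q$ is the weighted mix of potentials coming from \eqref{sol_lim}. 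The nondegeneracy \eqref{M1} then gives a zero $r_\la$ with $|r_\la-y_\va|=O(\la^{-\frac12+\theta})$, via a degree or intermediate value argument (it is scalar). We must check that the error terms, which are quadratic in $\omega$ plus the $1/r$ corrections, are $o(|M''|\,\la^{-\frac12+\theta})$. \textbf{We expect this to be the main obstacle}: the Pohozaev identity has to be computed to sufficient precision that the $\omega$-contributions do not swamp $M'$.

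\textbf{Step 4 (constraint).} Compute the mass of the solution:
\[
\int_{\r^2}(u^2+v^2)=2\pi\int_0^\infty s(u^2+v^2)\,ds=2\pi r_\la\la^{1/2}A\,\va^{-1}\cdot\va\ldots
\]
More precisely, by \eqref{sol_lim} the mass equals $2\pi A\,\va\, r_\la\la^{1/2}(1+o(1))$, where $A=\int_{\mathbb R}w^2$. With $r_\la\sim\la$ this is $\sim 2\pi A\va\la^{3/2}$. The map $\la\mapsto$ mass is continuous. At the endpoints $\la=(8A\pi\va)^{-1/(3/2+\theta)}$ and $\la=(4A\pi\va)^{-1/(3/2-\theta)}$ the mass lies respectively below and above $1$; the $\theta$ absorbs the constants $C_1,C_2$ and the $o(1)$ terms. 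The intermediate value theorem then yields $\la_\va$ in the stated interval with total mass $1$, which completes the proof.
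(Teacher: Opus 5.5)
Your proposal follows the paper's proof essentially step for step. The paper proves invertibility of $L_\la$ on the complement of the translation direction by blow-up at scale $\la^{-\frac12}$ and Lemma \ref{beta}, then uses the bound $\|l_\la\|_\la=O(\la^{-\frac14+\frac\theta2})$ with a contraction argument. Next, a Pohozaev-type identity reduces the multiplier equation to $M_\la'(r_\la)=O(\la^{-\frac12})$, and hence via \eqref{M1} to a zero within $\la^{-\frac12+\theta}$ of $y_\va$; the paper multiplies the equations by $r^2u_\la'$, $r^2v_\la'$ and combines with \eqref{identity1}, which to leading order is your translation test. Finally it applies the intermediate value theorem in $\la$ to the mass.

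Two small remarks:
\begin{itemize}
\item The step you flag as the main obstacle is mild. By Lemma \ref{poho2estimate}, the $\omega$-terms contribute only $O(\la^{1+\theta})$ against the main order $\la^{\frac32}r_\la$; the dominant $O(\la^{-\frac12})$ error comes from the profile itself.
\item In the statement $A=\int_{\mathbb R}|w'|^2$, so by Lemma \ref{pohozaev} the mass is $2\pi\va\cdot 3A\,\la^{\frac12}r_\la(1+o(1))$. This differs from your $A=\int_{\mathbb R}w^2$, but the discrepancy is harmless because, as you note, the $\pm\theta$ in the endpoint exponents absorbs all constants.
\end{itemize}
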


\begin{remark}
    As in \cite{JDE} or \cite{CVPDE}, if we take $P(r)=Q(r)=\sin r$, then $M_\va(r)$ satisfies \eqref{M1} and \eqref{M2}. In this case, $M_\va(r)$ admits multiple distinct non-degenerate critical points provided that $\va$ is sufficiently small. This yields the existence of both multi-peak solutions and multiple single-peak solutions. Furthermore, if we suppose $N(\va)$ denotes the number of single-peak solutions to \eqref{eqmain}, then $N(\va)$ is a non-decreasing function and as $\va\to 0$, $N(\va)\to+\infty$.
\end{remark}

\begin{remark}
	To the best of our knowledge, it seems to be the first result on high-dimensional concentrated solutions for nonlinear Schr\"{o}dinger systems which we obtain via the construction of radial solutions concentrated on a circle. Furthermore, inspired by the recent work of \cite{Duan}, we anticipate that non-radial solutions concentrated on general closed curves can also be constructed.
\end{remark}

This paper is organized as follows. In Section \ref{biyaoxing}, we will make full use of various identities and estimates to prove Theorem \ref{diyigedingli}. In Section \ref{cz1} we investigate the existence of solutions to system \eqref{eqmain} without $L^2$-constraint, and in Section \ref{cz2} we are devoted to establish the relationship between $\lambda$ and $\varepsilon$ and give the proof of Theorem \ref{diergedingli}.

    \section{Necessity }\label{biyaoxing}
	In this section, we will find the necessary conditions that determine the concentration property of radial solutions to problem \eqref{eqmain}. At the starting of this section, we 
	deduce some identities with respect to $w(r), U(r)$ and $V(r)$.
	\begin{lemma}\label{pohozaev}
		It holds
		\begin{equation}\label{pohozaev_single}
			3 \int_0^{+\infty} w^4(r)dr = 4 \int_0^{+\infty} w^2(r)dr = 12 \int_0^{+\infty} |w'(r)|^2 dr.
		\end{equation}
	\end{lemma}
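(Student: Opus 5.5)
Here $w$ is the positive even solution of \eqref{w} on $\mathbb{R}$, so $\int_0^{+\infty}$ is half of the integral over $\mathbb{R}$. Explicitly $w(r)=\sqrt{2}\,\mathrm{sech}(r)$. The plan is to obtain two independent linear relations among
\[
A_2=\int_0^{+\infty}w^2\,dr,\qquad A_4=\int_0^{+\infty}w^4\,dr,\qquad D=\int_0^{+\infty}|w'|^2\,dr,
\]
from the equation alone: one by testing with $w$, one from a first integral. Because $w'(0)=0$ and $w,w'$ decay exponentially, every boundary term at $0$ and at $+\infty$ vanishes.

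\textbf{Energy identity.} Multiply \eqref{w} by $w$ and integrate over $(0,+\infty)$. Integrating by parts gives $-\int_0^{+\infty} w''w\,dr = D - [w'w]_0^{+\infty} = D$, since $w'(0)=0$. Hence
\[
D + A_2 = A_4.
\]

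\textbf{First integral (one-dimensional Pohozaev identity).} Multiply \eqref{w} by $w'$. This gives
\[
\frac{d}{dr}\Big(-\tfrac12 (w')^2+\tfrac12 w^2-\tfrac14 w^4\Big)=0,
\]
and the decay at infinity fixes the constant as zero, so
\[
(w')^2 = w^2-\tfrac12 w^4 \quad\text{pointwise on } \mathbb{R}.
\]
Integrating over $(0,+\infty)$ yields
\[
D = A_2-\tfrac12 A_4.
\]

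\textbf{Solving the linear system.} Subtracting the two relations gives $2A_2=\tfrac32 A_4$, i.e. $3A_4=4A_2$. Then $D=A_2-\tfrac23 A_2=\tfrac13 A_2$, so $12D=4A_2$. Together these are exactly \eqref{pohozaev_single}.

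As a check, the explicit form gives $A_2=2$, $A_4=\tfrac83$, $D=\tfrac23$, which satisfy all three equalities. The only point needing care is the vanishing of the integration constant and of the boundary terms. This follows from the exponential decay of $w$ and $w'$, together with evenness, which gives $w'(0)=0$. No real obstacle is expected.
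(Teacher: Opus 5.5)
Your proof is correct and follows essentially the same route as the paper: test with $w$ to get $D+A_2=A_4$, obtain a second identity $D=A_2-\tfrac12 A_4$, and solve the resulting linear system. The only difference is cosmetic: you get the second relation from the pointwise first integral (multiplier $w'$), whereas the paper uses the Pohozaev multiplier $rw'$, and in one dimension these give the same identity after one integration by parts; your treatment of the boundary terms via $w'(0)=0$ and exponential decay is sound.
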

	
	\begin{proof}
		Since $w(r) \in H^1(\mathbb{R})$ solves \eqref{w},
		multiplying $w$ on both sides of  \eqref{w} and integrating over $(0, +\infty)$, we get
		\[
		\int_0^{+\infty} |w'(r)|^2 dr + \int_0^{+\infty} w^2(r) dr = \int_0^{+\infty} w^4(r) dr.
		\]
		Similarly, multiplying $rw'$ on both sides of  \eqref{w} and integrating over the same interval, we have
		\[
		- \int_0^{+\infty} |w'(r)|^2 dr + \int_0^{+\infty} w^2(r) dr = \frac{1}{2} \int_0^{+\infty} w^4(r) dr.
		\]
		Hence \eqref{pohozaev_single} holds by solving above systems.
	\end{proof}
	\begin{lemma}\label{pohozaev_system}
		It follows that
		\begin{equation}\label{pohozaev1}
			\int_{0}^{+\infty}(U^{2}+V^{2})dr = \frac{3}{4}\int_{0}^{+\infty} (\alpha U^{4} + \gamma V^4 +2\beta U^{2}V^{2}) dr ,
		\end{equation}
	and
	\begin{equation}\label{pohozaev2}
		\int_{0}^{+\infty}(U^{2}+V^{2})dr = 3 \int_{0}^{+\infty}(|U'|^{2}+|V'|^{2})dr.
	\end{equation}
	\end{lemma}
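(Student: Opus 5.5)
Since $(U,V)=(c_1 w, c_2 w)$ with $c_1^2=\frac{\gamma-\beta}{\alpha\gamma-\beta^2}$ and $c_2^2=\frac{\alpha-\beta}{\alpha\gamma-\beta^2}$ by \eqref{sol_lim}, the plan is to reduce both identities to Lemma \ref{pohozaev}. No new Pohozaev argument for the system is needed.

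First compute the quartic combination:
\[
\alpha U^4+\gamma V^4+2\beta U^2V^2=(\alpha c_1^4+\gamma c_2^4+2\beta c_1^2c_2^2)\,w^4 .
\]
Factoring $c_1^2$ from the first and last terms and $c_2^2$ from the middle ones gives
\[
\alpha c_1^4+\gamma c_2^4+2\beta c_1^2c_2^2=c_1^2(\alpha c_1^2+\beta c_2^2)+c_2^2(\gamma c_2^2+\beta c_1^2).
\]
By the definitions of $c_1,c_2$,
\[
\alpha c_1^2+\beta c_2^2=\frac{\alpha\gamma-\alpha\beta+\alpha\beta-\beta^2}{\alpha\gamma-\beta^2}=1,
\]
and similarly $\gamma c_2^2+\beta c_1^2=1$. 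These two equations are exactly the algebraic conditions making $(c_1w,c_2w)$ a solution of \eqref{eqlim}. Hence
\[
\alpha U^4+\gamma V^4+2\beta U^2V^2=(c_1^2+c_2^2)w^4,
\]
while $U^2+V^2=(c_1^2+c_2^2)w^2$ and $|U'|^2+|V'|^2=(c_1^2+c_2^2)|w'|^2$.

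Both identities then follow by integrating over $(0,+\infty)$, dividing by the common factor $c_1^2+c_2^2$, and applying \eqref{pohozaev_single}:
\begin{itemize}
\item \eqref{pohozaev1} becomes $\int_0^{+\infty} w^2=\frac34\int_0^{+\infty} w^4$, which is $4\int_0^{+\infty} w^2=3\int_0^{+\infty} w^4$.
\item \eqref{pohozaev2} becomes $\int_0^{+\infty} w^2=3\int_0^{+\infty}|w'|^2$, which is $4\int_0^{+\infty} w^2=12\int_0^{+\infty}|w'|^2$.
\end{itemize}

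The only point needing care is that the factor $c_1^2+c_2^2=\va$ is positive, so that dividing by it is legitimate. For $\beta$ in the admissible range, $c_1^2,c_2^2>0$: both numerator and denominator are positive when $\beta<\min\{\alpha,\gamma\}$ with $\beta^2<\alpha\gamma$, and both are negative when $\beta>\max\{\alpha,\gamma\}$. So the factor is positive and the proof is complete.

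Alternatively, one could test \eqref{eqlim} directly with $(U,V)$ and with $(rU',rV')$ and solve the resulting linear system as in Lemma \ref{pohozaev}. The substitution above is shorter and avoids any boundary terms at $r=0$.
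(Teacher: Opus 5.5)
Your proof is correct, but it takes a different route from the paper. The paper never uses the explicit form \eqref{sol_lim}. Write $K=\int_0^{+\infty}(|U'|^2+|V'|^2)\,dr$, $L=\int_0^{+\infty}(U^2+V^2)\,dr$ and $N=\int_0^{+\infty}(\alpha U^4+\gamma V^4+2\beta U^2V^2)\,dr$. Testing \eqref{2-0} against $(U,V)$ gives $K+L=N$. Testing against $(rU',rV')$, the two coupling terms combine into $-\beta\int_0^{+\infty} r(U^2V^2)'\,dr=\beta\int_0^{+\infty}U^2V^2\,dr$, which gives $L-K=\frac12 N$. Solving this $2\times 2$ system yields $L=\frac34 N$ and $K=\frac14 N$.

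You instead use that $(U,V)=(c_1w,c_2w)$ is synchronized and that $\alpha c_1^2+\beta c_2^2=\gamma c_2^2+\beta c_1^2=1$. This collapses every integrand to $c_1^2+c_2^2=\va$ times the corresponding scalar integrand, so the lemma is just Lemma \ref{pohozaev} multiplied by $\va$. Your argument is shorter and purely algebraic, but it depends on the explicit formula. The paper's argument does not, and would apply verbatim to any even, decaying solution of \eqref{eqlim}, synchronized or not.

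Two small remarks. First, you never need to divide by $c_1^2+c_2^2$: multiplying the scalar identities by it suffices, so the positivity check is not logically required. It is true, and it is needed anyway for $c_1,c_2$ to be real. Second, your route does not really avoid boundary terms at $r=0$; it inherits them from the proof of Lemma \ref{pohozaev} over $(0,+\infty)$. There they vanish because $w$ is even, exactly as they vanish for $U,V$ in the paper's computation.
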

	
	\begin{proof}
		By equation \eqref{eqlim}, $(U(r),V(r)) \in H^{1}(\mathbb{R})\times H^{1}(\mathbb{R})$ satisfies 
		\begin{equation}\label{2-0}
			\begin{cases}
				- U'' +  U= \alpha U^{3} + \beta UV^{2},\  &\text{ in } \ \r,\\
				- V'' +  V= \gamma V^{3} + \beta U^{2}V,\  &\text{ in } \ \r.\\
			\end{cases}
		\end{equation}
		Multiplying $U$ on both sides of $\eqref{2-0}_1$ and integrating over $(0,+\infty)$, 
        we thus obtain that
		\begin{equation}\label{2-1}
			\int_{0}^{+\infty}(|U'|^{2}+U^{2})dr 
			= \int_{0}^{+\infty} (\alpha U^{4} +\beta U^{2}V^{2}) dr. 
		\end{equation}
	Similarly, multiplying $V$ on both sides of $\eqref{2-0}_2$ and integrating over $(0,+\infty)$, one can show that
	\begin{equation}\label{2-2}
		\int_{0}^{+\infty}(|V'|^{2}+V^{2})dr 
		= \int_{0}^{+\infty} (\gamma V^{4} +\beta U^{2}V^{2}) dr. 
	\end{equation}
From \eqref{2-1} and \eqref{2-2}, we have
\begin{equation}\label{2-3}
	\int_{0}^{+\infty}(|U'|^{2}+|V'|^{2})dr +\int_{0}^{+\infty}(U^{2}+V^{2})dr 
	= \int_{0}^{+\infty} (\alpha U^{4} + \gamma V^4 +2\beta U^{2}V^{2}) dr. 
\end{equation}

By a similar argument, multiplying $rU'$ and $rV'$ on both sides of $\eqref{2-0}_1$ and $\eqref{2-0}_2$ respectively, then integrating over $(0,+\infty)$ yields that
\begin{equation}\label{2-4}
	-\int_{0}^{+\infty}|U'|^{2} dr + \int_{0}^{+\infty}U^{2}dr 
	= \frac{1}{2} \int_{0}^{+\infty}\alpha U^4 dr -2 \int_{0}^{+\infty} \beta r UU'V^2 dr,
\end{equation}
and
\begin{equation}\label{2-5}
	-\int_{0}^{+\infty}|V'|^{2} dr + \int_{0}^{+\infty}V^{2}dr 
	= \frac{1}{2} \int_{0}^{+\infty}\gamma V^4 dr -2 \int_{0}^{+\infty} \beta r U^2VV' dr.
\end{equation}
Therefore, equations \eqref{2-4} and \eqref{2-5} give rise to
\begin{equation}\label{2-6}
	\begin{split}
		-\int_{0}^{+\infty}(|U'|^{2} +|V'|^{2}) dr + \int_{0}^{+\infty}(U^{2} +V^{2})dr 
		&= \frac{1}{2} \int_{0}^{+\infty}(\alpha U^4 +\gamma V^4) dr - \int_{0}^{+\infty} \beta r (U^2V^2)' dr \\
		&= \frac{1}{2} \int_{0}^{+\infty}(\alpha U^4 +\gamma V^4) dr + \int_{0}^{+\infty} \beta U^2V^2 dr.
	\end{split}
\end{equation}
Combining \eqref{2-3} and \eqref{2-6}, we obtain \eqref{pohozaev1} and \eqref{pohozaev2}.
	\end{proof}
	
	Firstly, we consider the radial solution of problem \eqref{eqmain} without $L^2$-constraint:
	\begin{equation}\label{eqmain_without}
			\begin{cases}
					-\Delta u + (\la+P(|x|)) u= \alpha u^3 + \beta  uv^2,\  &\text{ in } \ \r^{2},\\
				-\Delta v +( \la+Q(|x|)) v= \gamma v^3 + \beta  u^2v,\  &\text{ in } \ \r^{2}.\\
				\end{cases} 
		\end{equation}
	Recall that $(U,V) \in H^{1}(\mathbb{R})\times H^{1}(\mathbb{R})$ is the radial positive solution of problem \eqref{eqlim}. Assume that $(u_{\la}, v_{\la})$ is a radial solution of problem \eqref{eqmain_without} with the form
	\begin{equation}\label{u_la}
		(u_\la(r),v_\la(r))=(U_{r_{\la},\la}(r),V_{r_{\la},\la}(r))+(\omega_{1,\la}(r), \omega_{2,\la}(r)),
	\end{equation}
where $$U_{r_{\la},\la}(r)= \sqrt{\la +P(r_\la)} U(\sqrt{\lambda+P(r_\la)} (r - r_{\la})),$$
$$V_{r_{\la},\la}(r)= \sqrt{\la +Q(r_\la)} V(\sqrt{\lambda+Q(r_\la)} (r - r_{\la}))$$ and 
\begin{equation}
    \label{yx}\|(\omega_{1,\la}(r), \omega_{2,\la}(r))\|_\la=o(\|(U_{r_{\la},\la}(r),V_{r_{\la},\la}(r))\|_\la)=o(\la^{\frac{3}{4}}r_{\la}^{\frac{1}{2}}).
\end{equation}

	Now we give the a prior estimate of $(u_{\la},v_\la)$, which is similar to Lemma 2.1.2 in \cite{CPY}. We omit the proof.
	
	\begin{lemma}\label{jieyoujie}
		Suppose that $(u_{\la},v_{\la})$ is a positive radial solution of form \eqref{u_la}. Then there exist $\eta > 0$ and $C > 0$, such that
		\begin{equation}\label{decay1}
			u_{\la}(x), v_{\la}(x)\leq C\sqrt{\lambda} e^{-\eta \sqrt{\lambda} \,|\, |x| - |x_{\la}| \,| }, \quad \text{for any } x\in \mathbb{R}^{2} . 
		\end{equation}
	Moreover, we have
	\begin{equation}\label{decay2}
	|u'(x)|,|v'(x)| \leq C e^{-\frac{1}{4}\eta \sqrt{\lambda} \delta} \ \text{ with } |x|=r, |x_{\la}| = r_{\la} \text{ and } |r-r_\la|=\delta.
	\end{equation}
	\end{lemma}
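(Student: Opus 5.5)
We prove Lemma \ref{jieyoujie} with a comparison (maximum principle) argument on the radial ODE, using the smallness assumption \eqref{yx} to control the solution away from the circle $r=r_\la$.

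\textbf{Step 1: uniform control of the remainder.} Rescale by setting $\tilde u(s)=\la^{-1/2}u_\la(r_\la+\la^{-1/2}s)$, and similarly $\tilde v$. By \eqref{yx}, the remainder $(\tilde\omega_1,\tilde\omega_2)$ has $H^1$ norm $o(1)$ on every unit interval in $s$. The factor $r_\la^{1/2}$ in \eqref{yx} compensates for the weight $r$ in the norm, since $r$ is comparable to $r_\la$ near the circle. Far from the circle the bound is even better, because $r\ge c$ there, or, near the origin, one uses the two-dimensional Sobolev inequality with the weight. The rescaled equations are
\[
-\tilde u''-\tfrac{1}{\sqrt\la\, r}\tilde u'+\bigl(1+P/\la\bigr)\tilde u=\alpha\tilde u^3+\beta\tilde u\tilde v^2 ,
\]
and similarly for $\tilde v$. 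The drift coefficient is bounded once $r\gtrsim \la^{-1/2}$. Local elliptic estimates (Moser iteration) then turn the smallness of $\tilde\omega$ in $H^1$ on unit intervals into smallness in $L^\infty$. Since $U$ and $V$ decay exponentially, this gives
\[
\tilde u,\ \tilde v\le \delta_0 \quad \text{for } |s|\ge R ,
\]
with $\delta_0$ as small as we like, provided $R$ is large and $\la$ is large. Near $r=0$ one can alternatively use that the radial functions are bounded in $L^\infty$ by the weighted $H^1$ norm on annuli.

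\textbf{Step 2: comparison.} On the region $|s|\ge R$ we have $\alpha\tilde u^2+|\beta|\tilde v^2\le C\delta_0^2<1/4$. Hence $\tilde u$ satisfies $L\tilde u\le 0$, where
\[
L:=-\Delta+\bigl(\la+P-\tfrac{\la}{4}\bigr)\cdot
\]
is written in the original variables. The same holds for $\tilde v$ after adding the two equations, or by using positivity. Since $P$ is bounded, the potential is at least $\la/2$ for large $\la$. We take as barrier
\[
\phi=C\sqrt\la\, e^{-\eta\sqrt\la\,|r-r_\la|},\qquad \eta<\tfrac12 .
\]
Computing in two dimensions,
\[
\Delta\phi=\phi''+\phi'/r=\eta^2\la\phi\mp\eta\sqrt\la\,\phi/r .
\]
For $r\ge r_\la$ the drift term has the helpful sign. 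For $r<r_\la$ it equals $+\eta\sqrt\la\,\phi/r$, which is dominated by $\la\phi/4$ as long as $r\ge C\la^{-1/2}$. The inner ball $r\le C\la^{-1/2}$ is handled separately: there $u_\la$ is already exponentially small by Step 1 when $r_\la\sqrt\la\to\infty$, since $\phi$ is essentially constant on that ball. So $\phi$ is a supersolution on $\{|r-r_\la|\ge R\la^{-1/2}\}$. It dominates $u_\la$ on the boundary $|r-r_\la|=R\la^{-1/2}$ by Step 1, after enlarging $C$. Both functions tend to $0$ at infinity, so the maximum principle yields \eqref{decay1} there. On the band $|r-r_\la|\le R\la^{-1/2}$, \eqref{decay1} is trivial from the $L^\infty$ bound $\sqrt\la\,(\|U\|_\infty+o(1))$.

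\textbf{Step 3: derivative bound.} For \eqref{decay2}, apply interior gradient estimates to the rescaled equation on a unit $s$-interval centred at distance $\delta$ from $r_\la$. This gives
\[
|u_\la'|\le C\la\sup|\tilde u|\le C\la\, e^{-\eta\sqrt\la\,\delta/2} .
\]
The polynomial factor $\la$ is absorbed by $e^{-\eta\sqrt\la\,\delta/4}$ provided $\sqrt\la\,\delta\gg\log\la$, which is the regime in which \eqref{decay2} is used (for instance $\delta$ fixed). Alternatively, integrate the ODE, $(ru')'=r(\cdots)u$, from infinity.

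The main obstacle is Step 1: converting the global smallness \eqref{yx}, which is only relative to $\la^{3/4}r_\la^{1/2}$, into pointwise smallness uniformly in $r$, including near the origin where the weight $r$ degenerates.
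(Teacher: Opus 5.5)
\textbf{Step 1 has a genuine gap on the inner side of the circle, and Step 2 depends on it.} If you rescale \eqref{yx} exactly as you do, you get
\[
\int_{-\sqrt{\lambda}r_\lambda}^{+\infty}\frac{r_\lambda+\lambda^{-1/2}s}{r_\lambda}\bigl(|\tilde\omega'|^2+\tilde\omega^2\bigr)\,ds=o(1).
\]
So on a unit $s$-interval around a radius $r$ you only obtain $\sup|\tilde\omega|\le o(1)\,(r_\lambda/r)^{1/2}$. This is small for $r\ge c\,r_\lambda$. But your claim that away from the circle ``the bound is even better, because $r\ge c$'' is backwards for $r<r_\lambda$: there the weight $r/r_\lambda$ is less than $1$. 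Once $r/r_\lambda$ is of the order of the $o(1)$ in \eqref{yx} or smaller, you get no smallness at all, and in the paper's regime $r_\lambda\to\infty$.

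Near the origin the two-dimensional Sobolev inequality does not help either. For $\hat\omega(y)=\lambda^{-1/2}\omega(y/\sqrt{\lambda})$ one has $\|\hat\omega\|_{H^1(\mathbb{R}^2)}^2\le C\lambda^{-1}\|\omega\|_\lambda^2=o(\lambda^{1/2}r_\lambda)$, which is unbounded. Yet your comparison in Step 2 needs $\alpha u_\lambda^2+|\beta|v_\lambda^2\le\lambda/4$ on the whole disc $\{|x|\le r_\lambda-R\lambda^{-1/2}\}$.

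\textbf{Why this cannot be patched from \eqref{yx} alone.} The hypothesis \eqref{yx} simply does not see concentration in the inner region. An extra circular bump at a radius $\rho$ with $\rho/r_\lambda\to0$ costs only of order $\lambda^{3/2}\rho$ in $\|\cdot\|_\lambda^2$. A point spike of the form $\sqrt{\lambda}\,W(\sqrt{\lambda}|x|)$ at the origin, with $W$ the two-dimensional ground state, costs only $O(\lambda)$. Both are $o(\lambda^{3/2}r_\lambda)$, so either can sit inside $\omega_\lambda$, yet each violates \eqref{decay1}.

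\textbf{What you need instead.} You have to import an extra hypothesis. One option is $\|\omega_\lambda\|_{L^\infty}=o(\sqrt{\lambda})$. Another is $\|\omega_\lambda\|_\lambda=o(\lambda^{1/2})$, which the solutions of Section \ref{cz1} satisfy, since there $\|\omega_\lambda\|_\lambda=O(\lambda^{-1/4+\theta/2})$. Under the second option $\hat\omega\to0$ in $H^1(\mathbb{R}^2)$, and elliptic regularity for the rescaled equation upgrades this to $\|\hat\omega\|_{L^\infty}\to0$. Your comparison strategy then goes through.

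\textbf{A smaller problem in Step 2.} The barrier $C\sqrt{\lambda}\,e^{-\eta\sqrt{\lambda}(r_\lambda-r)}$ is not a supersolution near $r=0$: the term $\phi'/r$ blows up, and $\phi$ has a convex conical point at the origin. Your patch, that $u_\lambda$ is already exponentially small there by Step 1, is circular, since Step 1 gives at best $u_\lambda\le\delta_0\sqrt{\lambda}$. On the inner disc, use instead the smooth barrier $C\sqrt{\lambda}\,I_0(\mu|x|)/I_0(\mu r_1)$ with $\mu^2=\lambda/2$ and $r_1=r_\lambda-R\lambda^{-1/2}$. It solves $-\Delta\phi+\mu^2\phi=0$ and is bounded by $C\sqrt{\lambda}\,e^{-\eta\sqrt{\lambda}(r_1-|x|)}$ for any $\eta<1/\sqrt{2}$.

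Your restriction of \eqref{decay2} to $\sqrt{\lambda}\,\delta\gg\log\lambda$ is correct and necessary, because $|u_\lambda'|$ is of order $\lambda$ when $\delta\sim\lambda^{-1/2}$.
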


	Using Lemma \ref{jieyoujie}, we can obtain the following estimate on the error term $(\omega_{1,\la},\omega_{2,\la})$. The proof is similar to the proof of Lemma 2.3 in \cite{CVPDE} and we omit it.
	
	\begin{lemma}
		There exists $\eta \in (0,1)$ such that
		\begin{equation}\label{decay_w}
			\omega_{1,\la}(r), \omega_{2,\la}(r) = O\Big(\sqrt{\la} e^{-\eta \sqrt{\la}|r - r_{\la}|}\Big), \ \text{ for any } r\geq 0.
		\end{equation}
	\end{lemma}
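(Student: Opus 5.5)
The plan is to prove \eqref{decay_w} by a comparison (maximum principle) argument, following Lemma 2.3 of \cite{CVPDE}. We split $r$ into a far region, $|r-r_\la|\ge R/\sqrt\la$, and a near region, $|r-r_\la|\le R/\sqrt\la$, where $R>0$ is a large fixed constant.

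\emph{Far region.} Since $\omega_{1,\la}=u_\la-U_{r_\la,\la}$, we bound each piece separately. For $u_\la$, Lemma \ref{jieyoujie} gives
\[
u_\la(r)\le C\sqrt\la\, e^{-\eta\sqrt\la|r-r_\la|}.
\]
For $U_{r_\la,\la}$, recall from \eqref{sol_lim} that $U$ is a constant multiple of $w$, and $w$ decays like $e^{-|t|}$. Since $\la+P(r_\la)\ge \la/2$ for large $\la$, we get
\[
U_{r_\la,\la}(r)\le C\sqrt\la\, e^{-\sqrt{\la/2}\,|r-r_\la|}.
\]
The same two bounds hold for $v_\la$ and $V_{r_\la,\la}$. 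Shrinking $\eta$ so that $\eta<\min\{1/\sqrt2,\ \eta_{\text{Lemma}}\}$, and taking $\eta<1$, the difference satisfies $|\omega_{i,\la}(r)|\le C\sqrt\la\, e^{-\eta\sqrt\la|r-r_\la|}$ on this region. This step is just the triangle inequality.

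\emph{Near region.} Here the claimed bound reduces to $|\omega_{i,\la}|\le C'\sqrt\la$, because the exponential factor is bounded below by $e^{-\eta R}$ there. This also follows from the triangle inequality: Lemma \ref{jieyoujie} gives $u_\la,v_\la\le C\sqrt\la$ on all of $\mathbb R^2$, and trivially $U_{r_\la,\la},V_{r_\la,\la}\le C\sqrt\la$, since $U$ and $V$ are bounded.

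\emph{Fallback.} If the precise decay rate in Lemma \ref{jieyoujie} turned out not to be directly usable, I would redo the argument by comparison. In the region $|r-r_\la|\ge R/\sqrt\la$, the functions $u_\la,v_\la$ are small compared with $\sqrt\la$. Hence, using $\beta v^2\le|\beta| v^2$, one has
\[
-\Delta u_\la+\tfrac{\la}{2}u_\la\le 0 \quad\text{there.}
\]
One then compares $u_\la$ with the radial supersolution $C\sqrt\la\,(e^{-\eta\sqrt\la(r-r_\la)}+e^{-\eta\sqrt\la(r_\la-r)})$. For $r$ near $r_\la\sim\la$, the drift term $\frac1r\partial_r$ in the radial Laplacian is $O(\la^{-1})\cdot\sqrt\la$ relative to the other terms, so it is harmless. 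This closes the comparison for $\eta<1/\sqrt2$.

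The only genuine obstacle is this uniformity: checking that the boundary values on $|r-r_\la|=R/\sqrt\la$ are $\le C\sqrt\la$, and handling $r$ near $0$, where the $\frac1r$ term is singular. In the region near $r=0$, however, the bound is exponentially small anyway, so the supersolution dominates trivially there.
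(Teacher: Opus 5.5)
Your proposal is correct and follows the paper's intended route. The paper omits the proof and says only that the estimate follows from Lemma \ref{jieyoujie}; your bounds $|\omega_{1,\la}|\le u_\la+U_{r_\la,\la}$ and $|\omega_{2,\la}|\le v_\la+V_{r_\la,\la}$, combined with $w(t)\le Ce^{-|t|}$ and a smaller $\eta$, give the estimate for every $r\ge 0$ at once, so the near/far split and the comparison fallback are unnecessary (and the fallback's claim that $u_\la,v_\la$ are small in the far region is itself what Lemma \ref{jieyoujie} provides).
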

	
	
	Noting that if $u$ is radial, it holds that $\Delta u = u'' + \frac{1}{r} u'$. Then problem \eqref{eqmain} can be rewritten as
	\begin{equation}\label{eqmain_radial}
		\begin{cases}
			- u'' - \frac{1}{r}u' + (\la+P(r)) u= \alpha u^3 + \beta  uv^2,\  &\text{ in } \ \r,\\
			- v'' - \frac{1}{r}v'  +( \la+Q(r)) v= \gamma v^3 + \beta  u^2v,\  &\text{ in } \ \r,\\
            2\pi\int_{0}^{+\infty}r(u^2+v^2)dr = 1.  
		\end{cases} 
	\end{equation}
	In the following, we have the identity about $u_{\la}$ and $v_{\la}$.
	\begin{proposition}\label{identityprop1}
		If $(u_{\la},v_{\la})$ is a solution of \eqref{eqmain_radial} without the $L^2$-constraint, then we find
		\begin{equation}\label{identity1}
			\int_{0}^{+\infty}r\big( |u_\la'|^2+|v_\la'|^2 +(\la+P(r))u_\la^2 +(\la+Q(r))v_\la^2\big) dr 
			= \int_{0}^{+\infty} r(\alpha u_\la^4 +\gamma v_\la^4 +2\beta u_\la^2v_\la^2) dr.
		\end{equation}
	\end{proposition}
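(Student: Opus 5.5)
This is the weak-form (Nehari-type) identity for the radial system, so the plan is to test each equation of \eqref{eqmain_radial} against its own unknown with the weight $r$. I would multiply the first equation by $r u_\la$ and the second by $r v_\la$, integrate over $(0,+\infty)$, and add the two results. The key observation is that the radial operator has divergence form,
\[
- u'' - \frac{1}{r}u' = -\frac{1}{r}\big(r u'\big)',
\]
so that $-\big(r u_\la'' + u_\la'\big)u_\la = -\big(r u_\la'\big)' u_\la$.

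On a truncated interval $(\delta,R)$, integration by parts then gives
\[
\int_\delta^R -\big(r u_\la'\big)' u_\la\,dr = \int_\delta^R r|u_\la'|^2\,dr - \Big[r u_\la' u_\la\Big]_\delta^R ,
\]
and the same formula holds for $v_\la$. The remaining terms, $r(\la+P)u_\la^2$ and $r(\la+Q)v_\la^2$ on the left and $r(\alpha u_\la^4+\beta u_\la^2v_\la^2)$ and $r(\gamma v_\la^4+\beta u_\la^2 v_\la^2)$ on the right, need no integration by parts. When the two equations are added, the two coupling terms combine into $2\beta r u_\la^2 v_\la^2$, which is exactly the right-hand side of \eqref{identity1}.

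The only step requiring care is showing that the boundary terms vanish as $\delta\to0$ and $R\to\infty$.
\begin{itemize}
\item At $r=\delta\to 0$: the functions $u_\la,v_\la$ are radial solutions of an elliptic equation in $\r^2$ and hence smooth at the origin, so $u_\la'$ and $u_\la$ are bounded near $0$. Therefore $\delta u_\la'(\delta)u_\la(\delta)\to0$.
\item At $r=R\to\infty$: by Lemma \ref{jieyoujie}, namely the exponential decay \eqref{decay1} of $u_\la$ together with the bound \eqref{decay2} on $u_\la'$, we get $R\,u_\la'(R)u_\la(R)\le CR\sqrt{\la}e^{-\eta\sqrt\la (R-r_\la)}\to0$.
\end{itemize}
The same decay estimates ensure that every integral in \eqref{identity1} is finite; for the potential terms this also uses the boundedness of $P$ and $Q$. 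Letting $\delta\to0$ and $R\to\infty$ therefore yields \eqref{identity1}.

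The main, though mild, obstacle is justifying these boundary limits and integrability. If one prefers not to invoke Lemma \ref{jieyoujie}, the alternative is to use $u_\la,v_\la\in H^1_{rad}(\r^2)$: then $r u_\la u_\la'\in L^1(0,\infty)$, so $\liminf_{R\to\infty} R|u_\la(R)u_\la'(R)|=0$, and passing to the limit along a suitable sequence $R_n\to\infty$ gives the same conclusion.
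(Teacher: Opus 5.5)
Your proposal is correct and follows the paper's proof exactly: multiply the two equations of \eqref{eqmain_radial} by $r u_\la$ and $r v_\la$, integrate by parts over $(0,+\infty)$, and add. The only difference is that you explicitly justify why the boundary terms $r u_\la' u_\la$ and $r v_\la' v_\la$ vanish at $0$ and at $+\infty$, using regularity at the origin and either Lemma \ref{jieyoujie} or the $H^1$ integrability argument. The paper leaves this step implicit.
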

	
	\begin{proof}
		Multiplying $r u_{\la}$ and $r v_{\la}$ on both sides of $\eqref{eqmain_radial}_1$ and $\eqref{eqmain_radial}_2$ respectively and integrating from 0 to $+\infty$, we obtain \eqref{identity1}.
	\end{proof}
	
	Next, we can compute each term in \eqref{identity1}.
	
	\begin{proposition}\label{identityprop1jisuan}
		Let $(u_{\la},v_{\la})$ be a radial solution of \eqref{eqmain_radial} with the form \eqref{u_la}-\eqref{yx}, then we have the following estimates
		\begin{equation}\label{2-7}
			\begin{split}
				\int_{0}^{+\infty}r (|u_{\la}'|^{2}+|v_{\la}'|^{2}) d r =& \int_{0}^{+\infty}r (|U_{r_{\la},\la}'|^{2}+|V_{r_{\la},\la}'|^{2}) d r + O\Big( \la^{\frac{3}{4}}r_{\la}^{\frac{1}{2}} \|(\omega_{1,\la}, \omega_{2,\la})\|_\la+ \|(\omega_{1,\la}, \omega_{2,\la})\|_\la^2 \Big),
			\end{split}
		\end{equation}
	\begin{equation}\label{2-8}
		\begin{split}
				\int_{0}^{+\infty}r[(\la+P(r))u_{\la}^{2}+(\la+Q(r))v_{\la}^{2} ]dr 
			=& \int_{0}^{+\infty}r[(\la+P(r))U_{r_{\la},\la}^{2}+(\la+Q(r))V_{r_{\la},\la}^{2}] dr \\ 
			&+ O\Big( \la^{\frac{3}{4}}r_{\la}^{\frac{1}{2}} \|(\omega_{1,\la}, \omega_{2,\la})\|_\la+ \|(\omega_{1,\la}, \omega_{2,\la})\|_\la^2 \Big),
		\end{split}
	\end{equation}
and	\begin{equation}\label{2-9}
		\begin{split}
			\int_{0}^{+\infty}r (\alpha u_{\la}^{4} +\gamma v_{\la}^{4} + 2\beta u_{\la}^{2}v_{\la}^{2}) d r 
			=& \int_{0}^{+\infty}r (\alpha U_{r_{\la},\la}^{4} +\gamma V_{r_{\la},\la}^{4} + 2\beta U_{r_{\la},\la}^{2}V_{r_{\la},\la}^{2}) d r  \\ 
			&+ O\Big( \la^{\frac{3}{4}}r_{\la}^{\frac{1}{2}} \|(\omega_{1,\la}, \omega_{2,\la})\|_\la+ \|(\omega_{1,\la}, \omega_{2,\la})\|_\la^2 \Big).
		\end{split}
	\end{equation}

	\end{proposition}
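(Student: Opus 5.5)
The plan is to substitute the decomposition \eqref{u_la} into each integral, expand algebraically, and control every cross term by the Cauchy--Schwarz inequality in the weighted space with measure $r\,dr$. Throughout I write $\omega=(\omega_{1,\la},\omega_{2,\la})$ and use $\|\omega\|_\la$ for its norm. The basic input is a size estimate for the approximate solution. Substituting $s=\sqrt{\la+P(r_\la)}(r-r_\la)$, and using that $U$ decays exponentially while $r_\la$ is large compared with $\la^{-1/2}$, one gets
\[
\int_0^{+\infty} r\big(|U_{r_\la,\la}'|^2+(\la+P(r))U_{r_\la,\la}^2\big)dr=O(\la^{\frac32}r_\la),
\]
and the same bound holds for $V_{r_\la,\la}$. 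Hence $\|(U_{r_\la,\la},V_{r_\la,\la})\|_\la=O(\la^{\frac34}r_\la^{\frac12})$, which is the normalisation already used in \eqref{yx}.

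For \eqref{2-7} and \eqref{2-8}, expand the quadratic forms: $|u_\la'|^2=|U_{r_\la,\la}'|^2+2U_{r_\la,\la}'\omega_{1,\la}'+|\omega_{1,\la}'|^2$, and analogously for the potential terms. Because $P$ and $Q$ are bounded, $\la+P$ and $\la+Q$ are comparable to $\la$ for large $\la$, so $\|\cdot\|_{1,\la}$ and $\|\cdot\|_{2,\la}$ control the relevant weighted $L^2$ norms. Cauchy--Schwarz then bounds the cross terms by
\[
\|(U_{r_\la,\la},V_{r_\la,\la})\|_\la\|\omega\|_\la=O(\la^{\frac34}r_\la^{\frac12}\|\omega\|_\la),
\]
and the pure $\omega$ terms are at most $\|\omega\|_\la^2$.

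For \eqref{2-9}, expand $u_\la^4=(U+\omega_1)^4$ and $u_\la^2v_\la^2$ into terms with one, two, three or four factors of $\omega$.
\begin{itemize}
\item The linear terms, for example $\int r\,U^3\omega_1\,dr$, are bounded by $\big(\int rU^6\big)^{1/2}\big(\int r\omega_1^2\big)^{1/2}$. Here $\int rU^6\,dr=O(\la^{\frac52}r_\la)$ and $\big(\int r\omega_1^2\big)^{1/2}\le\la^{-\frac12}\|\omega\|_\la$, so the product is $O(\la^{\frac34}r_\la^{\frac12}\|\omega\|_\la)$.
\item The quadratic terms, such as $\int rU^2\omega_1^2\,dr$, are bounded by $\|U\|_\infty^2\int r\omega_1^2\,dr\le C\la\cdot\la^{-1}\|\omega\|_\la^2=O(\|\omega\|_\la^2)$.
\item The cubic and quartic terms cannot be handled by a crude Sobolev embedding in the weighted space, so I use the pointwise decay \eqref{decay_w}: $|\omega_{i,\la}|\le C\sqrt\la$. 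This gives $\omega^4\le C\la\,\omega^2$ and $|U\omega^3|\le C\la\,\omega^2$, so these terms are also $O(\|\omega\|_\la^2)$.
\end{itemize}
Mixed $\beta$ terms like $U^2\omega_2^2$ or $UV\omega_1\omega_2$ are treated in exactly the same way.

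The main obstacle is the step for the higher-order terms in \eqref{2-9}. Sobolev embeddings in the measure $r\,dr$ would lose powers of $r_\la$, so the argument has to rely on the $L^\infty$ bound $O(\sqrt\la)$ for $\omega$ coming from \eqref{decay_w} (equivalently, from Lemma \ref{jieyoujie}). With that bound, every nonlinear term reduces to a multiple of $\la\int r\omega^2\,dr\le C\|\omega\|_\la^2$, and collecting all the estimates yields \eqref{2-7}--\eqref{2-9}.
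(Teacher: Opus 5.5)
Your proposal is correct and follows the paper's proof essentially step by step. You expand the squares and fourth powers and bound the cross terms by Cauchy--Schwarz using $\|(U_{r_\la,\la},V_{r_\la,\la})\|_\la=O(\la^{3/4}r_\la^{1/2})$; for the quadratic, cubic and quartic terms in \eqref{2-9} you use the pointwise bounds $U_{r_\la,\la}\le C\sqrt{\la}$ and \eqref{decay_w} to reduce everything to $\la\int_0^{+\infty} r\,\omega^2\,dr\le C\|\omega\|_\la^2$, exactly as the paper does. The only cosmetic difference is in the linear term: you apply Cauchy--Schwarz directly through $\int_0^{+\infty} rU_{r_\la,\la}^6\,dr$, whereas the paper first uses $U_{r_\la,\la}^2\le C\la$; both give the same $O(\la^{3/4}r_\la^{1/2}\|\omega\|_\la)$ bound.
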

	
	\begin{proof}
		By H\"older's inequality, we have
		\begin{equation*}
			\begin{split}
				&\int_{0}^{+\infty}r (|u_{\la}'|^{2}+|v_{\la}'|^{2}) d r \\
				= & \int_{0}^{+\infty}r (|U_{r_{\la},\la}'|^{2} +|V_{r_{\la},\la}'|^{2}) d r 
				+ 2 \int_{0}^{+\infty}r (U_{r_{\la},\la}'\omega_{1,\la}' + V_{r_{\la},\la}'\omega_{2,\la}') d r 
				 + \int_{0}^{+\infty}r ( |\omega_{1,\la}'|^{2} + |\omega_{2,\la}'|^{2}) d r \\
				=& \int_{0}^{+\infty}r (|U_{r_{\la},\la}'|^{2} +|V_{r_{\la},\la}'|^{2}) d r  + O\big(\|(U_{r_{\la},\la},V_{r_{\la},\la})\|_\la \|(\omega_{1,\la}, \omega_{2,\la})\|_\la 
				+ \|(\omega_{1,\la}, \omega_{2,\la})\|_\la^2 \big) \\
				=& \int_{0}^{+\infty}r (|U_{r_{\la},\la}'|^{2} +|V_{r_{\la},\la}'|^{2}) d r  
				+ O\big( \la^{\frac{3}{4}}r_\la^{\frac{1}{2}} \|(\omega_{1,\la}, \omega_{2,\la})\|_\la 
				+ \|(\omega_{1,\la}, \omega_{2,\la})\|_\la^2 \big),
			\end{split} 
		\end{equation*}     
which yields \eqref{2-7}. In a similar way, we obtain \eqref{2-8} by computing
	\begin{equation*}
	\begin{split}
		&\int_{0}^{+\infty}r[(\la+P(r))u_{\la}^{2}+(\la+Q(r))v_{\la}^{2} ]dr \\ 
		=& \int_{0}^{+\infty}r[(\la+P(r))U_{r_{\la},\la}^{2}+(\la+Q(r))V_{r_{\la},\la}^{2}] dr 
		+ O\Big( \la^{\frac{3}{4}}r_{\la}^{\frac{1}{2}} \|(\omega_{1,\la}, \omega_{2,\la})\|_\la+ \|(\omega_{1,\la}, \omega_{2,\la})\|_\la^2 \Big).
	\end{split}
	\end{equation*}
Using \eqref{decay_w}, it is straightforward to check that
	\begin{equation}\label{2-15}
	\begin{split}
		\int_{0}^{+\infty} ru_{\la}^{4}  dr
		=& \int_{0}^{+\infty}r \Big(U_{r_{\la},\la}^{4} 
		+ 4 U_{r_{\la},\la}^{3} \omega_{1,\la} 
		+ 6 U_{r_{\la},\la}^{2} \omega_{1,\la}^2 
		+ 4 U_{r_{\la},\la} \omega_{1,\la}^3 
		+ \omega_{1,\la}^4 \Big) dr  \\
		=& \int_{0}^{+\infty}r U_{r_{\la},\la}^{4} dr 
		+ O\Big( \la^{\frac{3}{4}}r_{\la}^{\frac{1}{2}} \|(\omega_{1,\la}, \omega_{2,\la})\|_\la +\|\omega_{1,\la} \|_{1,\la}^2\Big),
	\end{split}
	\end{equation}
where we have employed the fact
		$$\int_{0}^{+\infty} rU_{r_{\la},\la}^{3} \omega_{1,\la}d r 
		\leq C \la \int_{0}^{+\infty} rU_{r_{\la},\la} \omega_{1,\la}d r 
		\leq C \|U_{r_{\la},\la}\|_{1,\la} \| \omega_{1,\la} \|_{1,\la} 
		\leq C  \la^{\frac{3}{4}}r_{\la}^{\frac{1}{2}} \|\omega_{1,\la} \|_{1,\la},  $$
		$$\int_{0}^{+\infty} rU_{r_{\la},\la}^{2} \omega_{1,\la}^2d r 
		\leq C\int_{0}^{+\infty}r \la \omega_{1,\la}^2 dr 
		\leq C \|\omega_{1,\la} \|_{1,\la}^2, $$
		$$\int_{0}^{+\infty} rU_{r_{\la},\la} \omega_{1,\la}^3 d r 
		\leq C\int_{0}^{+\infty}r \la \omega_{1,\la}^2 dr 
		\leq C \|\omega_{1,\la} \|_{1,\la}^2, $$
		$$\int_{0}^{+\infty} r \omega_{1,\la}^4d r 
		\leq C\int_{0}^{+\infty}r \la \omega_{1,\la}^2 dr 
		\leq C \|\omega_{1,\la} \|_{1,\la}^2.$$
Similar to the computations of \eqref{2-15}, we can estimate $\int_{0}^{+\infty} rv_{\la}^{4}  dr$ and $\int_{0}^{+\infty} ru_{\la}^{2}v_\la^2  dr$, and finally derive \eqref{2-9}.
	\end{proof}
	
	Using above analysis, we can deduce a preliminary necessary condition.
	
	\begin{lemma}\label{radialprop}
		Let $(u_{\la},v_{\la})$ be a radial solution of \eqref{eqmain_radial} with the form \eqref{u_la}-\eqref{yx}, then we are able to obtain
		\begin{equation}\label{radial}
			\la^{\frac{1}{2}}r_\la \to +\infty, \quad \text{ as } \la \to +\infty.
		\end{equation}
		Moreover, if $(u_{\la},v_{\la})$ satisfies the $L^2$-constraint, then it must hold that $\frac{\alpha+\gamma-2\beta}{\alpha\gamma-\beta^2} \to 0$.
	\end{lemma}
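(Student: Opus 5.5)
Both claims will come from evaluating the identity \eqref{identity1} and the $L^2$-constraint on the ansatz \eqref{u_la}, using the one-dimensional identities of Lemma \ref{pohozaev_system}. Throughout, let $\mu_1=\la+P(r_\la)$ and $\mu_2=\la+Q(r_\la)$, so that $\mu_i=\la(1+O(\la^{-1}))$ because $P$ and $Q$ are bounded.

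\emph{Step 1: the leading terms.} In each integral of Proposition \ref{identityprop1jisuan} substitute $r=r_\la+s/\sqrt{\mu_i}$. Then $r=r_\la+O(|s|/\sqrt{\la})$, and the integrals are taken over $s\in(-\sqrt{\mu_i}\,r_\la,+\infty)$. One finds
\[
\int_0^{+\infty} r|U_{r_\la,\la}'|^2\,dr=\mu_1^{3/2}\int_{-\sqrt{\mu_1}r_\la}^{+\infty}\Big(r_\la+\frac{s}{\sqrt{\mu_1}}\Big)|U'(s)|^2\,ds .
\]
Analogous formulas hold for the mass terms and the quartic terms. For the mixed term $U_{r_\la,\la}^2V_{r_\la,\la}^2$ the two scalings differ by $O(\la^{-1/2})$, which only produces lower-order errors.

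\emph{Step 2: proof of \eqref{radial}, by contradiction.} Suppose $\sqrt{\la}\,r_\la\to L<\infty$ along a subsequence. Set $t_\la=\sqrt{\la}\,r_\la$. Divide \eqref{identity1} by $\la^{3/2}$, and also divide the associated Pohozaev-type identity by $\la^{3/2}$; that identity is obtained by multiplying \eqref{eqmain_radial} by $r^2u_\la'$ and $r^2v_\la'$. The $\omega$-errors are $o(\la^{3/2}r_\la)=o(\la)$, so after division they vanish in the limit. In the limit the rescaled functions satisfy two identities for the truncated profile on $(-L,\infty)$ with weight $(L+s)$. These correspond to a genuinely two-dimensional radial solution of \eqref{ssss} with $N=2$. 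On the other hand, the one-dimensional profile $(U,V)(\cdot-L)$ solves the one-dimensional system \eqref{eqlim}, not the two-dimensional one: the term $\frac1r u'$ is of the same order as $u''$ when $r\sim\la^{-1/2}$. Hence the ansatz with $\|\omega\|=o(\la^{3/4}r_\la^{1/2})$ cannot hold. I expect this to be the main obstacle. The cleanest route is to test the rescaled equation against the rescaled $U'$: this yields $\int \frac{U'^2}{L+s}\,ds\ne 0$ in the limit, whereas it should equal $o(1)$, a contradiction. The case $L=0$ is treated in the same way, with $U'$ cut off near the origin.

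\emph{Step 3: $\va\to0$.} Once $t_\la\to\infty$, the lower limit $-\sqrt{\mu_i}r_\la\to-\infty$. Together with the exponential decay of $U$ and $V$, and since $\int sU^2\,ds=0$ by evenness, this gives
\[
2\pi\int_0^{+\infty}r\big(U_{r_\la,\la}^2+V_{r_\la,\la}^2\big)dr=2\pi\, r_\la\,\la^{1/2}\big(1+o(1)\big)\int_{\r}(U^2+V^2)\,ds .
\]
By \eqref{sol_lim}, $\int_\r(U^2+V^2)\,ds=\va\int_\r w^2\,ds$. The cross term with $\omega$ is bounded by $\la^{-1/2}\cdot\la^{3/4}r_\la^{1/2}\cdot o(\la^{3/4}r_\la^{1/2})=o(\la r_\la)$. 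This is not yet smaller than the main term; to sharpen it, use the pointwise decay \eqref{decay_w} together with Cauchy–Schwarz in $L^2(r\,dr)$: $\|\omega\|_{L^2(r\,dr)}\le\la^{-1/2}\|\omega\|_\la=o(\la^{1/4}r_\la^{1/2})$, while $\|U_{r_\la,\la}\|_{L^2(r\,dr)}\sim\la^{1/4}r_\la^{1/2}$. So the mass equals $2\pi\va\, r_\la\la^{1/2}\int_\r w^2\,(1+o(1))$. The constraint then gives
\[
\va=\frac{1+o(1)}{2\pi\, r_\la\sqrt{\la}\int_\r w^2\,ds}\to0
\]
by \eqref{radial}.
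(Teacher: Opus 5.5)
\textbf{Step 3 is fine; Step 2 has a genuine gap.}

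Your Step 3 is essentially the paper's mass computation and is correct. Cauchy--Schwarz in $L^2(r\,dr)$ alone gives the $o(\la^{1/2}r_\la)$ error, so \eqref{decay_w} is not needed. If that error is only absolute rather than relative to $\va\la^{1/2}r_\la$, the identity $\frac{1}{2\pi}=\la^{1/2}r_\la\big(\va\int_{\r}w^2+o(1)\big)$ still forces $\va\to0$.

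The gap is Step 2, where the contradiction is asserted rather than derived. Three things go wrong.
\begin{itemize}
\item The normalisation is wrong. If $\sqrt{\la}\,r_\la$ stays bounded, every main term of \eqref{identity1} is of order $\la$, e.g.\ $\int_0^{+\infty}r|U_{r_\la,\la}'|^2dr=\mu_1\int_{-\sqrt{\mu_1}r_\la}^{+\infty}(\sqrt{\mu_1}r_\la+s)|U'(s)|^2ds$. Dividing by $\la^{3/2}$ therefore sends every term to $0$, not just the errors.
\item Two integral identities satisfied by the truncated profile do not make it a two-dimensional solution, so nothing follows from that observation as stated.
\item Your concrete route fails. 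For $L>0$ you have $U'(-L)\neq 0$, so $\int_{-L}^{+\infty}\frac{U'(s)^2}{L+s}\,ds=+\infty$, with a logarithmic divergence at $s=-L$, i.e.\ at $r=0$. Equivalently, $\frac1r U_{r_\la,\la}'$ is not integrable against $U_{r_\la,\la}'$ near the origin. Splitting $\frac1r u_\la'=\frac1r U_{r_\la,\la}'+\frac1r\omega_{1,\la}'$ then creates two divergent pieces, and $\int\frac1r\omega_{1,\la}'U_{r_\la,\la}'\,dr$ is not controlled by $\|\omega\|_\la$. A cut-off at scale $\la^{-1/2}$ near $r=0$ is needed precisely because of the case $L>0$. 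For $L=0$ the limit integral converges since $U'(0)=0$, so you have this reversed.
\end{itemize}

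The missing step is a comparison you almost wrote down: test the one-dimensional profile equation against the same weight as the Nehari identity. The paper multiplies \eqref{eqmain_approxi} by $rU_{r_\la,\la}$ and $rV_{r_\la,\la}$ and integrates over $(0,+\infty)$. Since $\int_0^{+\infty}r(-U_{r_\la,\la}'')U_{r_\la,\la}\,dr=\int_0^{+\infty}r|U_{r_\la,\la}'|^2dr+\int_0^{+\infty}U_{r_\la,\la}U_{r_\la,\la}'\,dr$, subtracting this from \eqref{identity1} leaves only a boundary term at the origin. Here one uses Proposition \ref{identityprop1jisuan} and $\int_0^{+\infty}r|P(r)-P(r_\la)|U_{r_\la,\la}^2dr\le C\la^{1/2}r_\la$. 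The result is
\[
-\tfrac12\big(U_{r_\la,\la}^2(0)+V_{r_\la,\la}^2(0)\big)=o(\la^{3/2}r_\la),
\qquad U_{r_\la,\la}^2(0)=(\la+P(r_\la))\,U^2\big(\sqrt{\la+P(r_\la)}\,r_\la\big),
\]
which is \eqref{2-17} and is impossible if $\la^{1/2}r_\la$ is bounded. In your limiting language, this is the mismatch $\frac12\big(U^2(L)+V^2(L)\big)>0$ between the weighted Nehari identity on $(-L,\infty)$ and the 1D equation tested with $(L+s)U$.

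Your idea that ``the limit must be a 2D solution'' can also be made rigorous by a weak-limit argument. Since $\la^{-1}\|\omega\|_\la^2\to0$, the rescaled $u_\la$ converges in $H^1(\r^2)$ to $U(|y|-L)$. Testing with radial functions supported in an annulus away from $0$ then forces $U'\equiv0$ there, a contradiction.
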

	
	\begin{proof}
		From \eqref{u_la}-\eqref{yx}, Proposition \ref{identityprop1} and Proposition \ref{identityprop1jisuan}, we get
		\begin{equation*}
			\begin{split}
				{}&\int_{0}^{+\infty}r (|U_{r_{\la},\la}'|^{2}+|V_{r_{\la},\la}'|^{2}) d r 
				+\int_{0}^{+\infty}r[(\la+P(r))U_{r_{\la},\la}^{2}+(\la+Q(r))V_{r_{\la},\la}^{2}] dr \\
				={}& \int_{0}^{+\infty}r (\alpha U_{r_{\la},\la}^{4} +\gamma V_{r_{\la},\la}^{4} + 2\beta U_{r_{\la},\la}^{2}V_{r_{\la},\la}^{2}) d r
				+o(\la^{\frac{3}{2}}r_\la),
			\end{split}
		\end{equation*}
		which combining with \eqref{eqmain_approxi} gives us that
		\begin{equation}\label{2-16}
			\begin{split}
				\int_{0}^{+\infty}r (U_{r_{\la},\la} U_{r_{\la},\la}' + V_{r_{\la},\la}V_{r_{\la},\la}') d r  
				=o(\la^{\frac{3}{2}}r_\la),
			\end{split}
		\end{equation}
	where we used 
		\[
		\int_{0}^{+\infty}r (P(r)-P(r_\la)) U_{r_{\la},\la}^2 dr 
		\leq C \la^{-1}	\int_{0}^{+\infty}r \la U_{r_{\la},\la}^2 dr  
		\leq C \la^{-1}	 \|U_{r_{\la},\la}\|_{1,\la}^2  \leq C \la^{\frac{1}{2}}r_\la.
		\]
		Then using integration by parts in \eqref{2-16}, we deduce that
		\begin{equation}\label{2-17}
			\begin{split}
				U^{2}\Big( \sqrt{\la+P(r_\la)} r_{\la}\Big) + V^{2}\Big( \sqrt{\la+Q(r_\la)} r_{\la}\Big) 
				= o(\la^{\frac{1}{2}}r_\la).
			\end{split}
		\end{equation}
		If $\la^{\frac{1}{2}}r_\la \leq C_{0}$ for some fixed constant $C_0$, then \eqref{2-17} gives that $U^{2}(C_{0})+V^2(C_0) = 0$, which is impossible. Hence, we find \eqref{radial}. 
		
		Moreover, from the $L^2$-constraint, see $\eqref{eqmain_radial}_3$, one can check that
		\begin{equation}
			\begin{split}\label{L2}
				\frac{1}{2\pi} 
				={}& \int_{0}^{+\infty}r(u_\la^2+v_\la^2)dr \\
				={}& \int_{0}^{+\infty}r  (U_{r_{\la},\la}^2 +V_{r_{\la},\la}^2 )dr 
				+o(\la^{\frac{1}{2}}r_\la) \\
				={}& \la \int_{0}^{+\infty}r  \Big[U^{2}\Big( \sqrt{\la+P(r_\la)} (r-r_{\la})\Big) + V^{2}\Big( \sqrt{\la+Q(r_\la)} (r-r_{\la})\Big)\Big] dr 
				+o(\la^{\frac{1}{2}}r_\la) \\
				={}& \la r_\la \int_{0}^{+\infty}  \Big[U^{2}\Big( \sqrt{\la+P(r_\la)} (r-r_{\la})\Big) + V^{2}\Big( \sqrt{\la+Q(r_\la)} (r-r_{\la})\Big)\Big] dr 
				+o(\la^{\frac{1}{2}}r_\la) \\
				={}&  \frac{\alpha+\gamma-2\beta}{\alpha\gamma-\beta^2} \la^{\frac{1}{2}}r_\la \int_{-\infty}^{+\infty} w^2 dr 
				+o(\la^{\frac{1}{2}}r_\la) \\
			\end{split}
		\end{equation}
		which combining with \eqref{radial} gives that $\va=\frac{\alpha+\gamma-2\beta}{\alpha\gamma-\beta^2} \to 0$.
	\end{proof}

    \begin{remark}
     More precisely, \eqref{L2} can be improved to that
        \begin{equation}\label{L2improve}
            \frac{1}{2\pi\va} = \la^{\frac{1}{2}}r_\la \int_{-\infty}^{+\infty} w^2 dr 
				+o(\la^{\frac{1}{2}}r_\la). 
        \end{equation}
        In fact, the constructed solution $(u_\la,v_\la)$ can be refined as $u_\la= \sqrt{\frac{\gamma-\beta}{\alpha\gamma-\beta^2}} \bigg( \frac{1}{\sqrt{\frac{\gamma-\beta}{\alpha\gamma-\beta^2}}} U_{r_\la,\la} +\tilde{\omega}_{1,\la} \bigg)$ and $v_\la= \sqrt{\frac{\alpha-\beta}{\alpha\gamma-\beta^2}} \bigg( \frac{1}{\sqrt{\frac{\alpha-\beta}{\alpha\gamma-\beta^2}}} V_{r_\la,\la} +\tilde{\omega}_{2,\la} \bigg)$. Obviously, $\|(\tilde{\omega}_{1,\la}, \tilde{\omega}_{2,\la})\|_\la^2=\frac{\alpha\gamma-\beta^2}{\gamma-\beta}\|\omega_{1,\lambda}\|_{1,\lambda}^2+\frac{\alpha\gamma-\beta^2}{\alpha-\beta}\|\omega_{2,\lambda}\|_{2,\lambda}^2$.
    \end{remark}
	
	\begin{proposition}\label{lemmapoho}
		Let $(u_{\la},v_{\la})$ be a radial solution to \eqref{eqmain_radial} of the form \eqref{u_la}-\eqref{yx}, then the following conclusions hold
		\begin{equation}\label{2-10}
			\int_{0}^{+\infty}r(|u_{\la}'|^{2}+|v_{\la}'|^{2}) d r 
			= \frac{\alpha+\gamma-2\beta}{\alpha\gamma-\beta^2}A \la^{\frac{3}{2}}r_{\la} + o\big(\la^{\frac{3}{2}}r_{\la} \big),
		\end{equation}
	\begin{equation}\label{2-11}
		\int_{0}^{+\infty} r^2 [P'(r)u_\la^2+Q'(r)v_\la^2] dr
		= \frac{3A}{\alpha\gamma-\beta^2} [(\gamma-\beta)P'(r_\la)+(\alpha-\beta)Q'(r_\la)] \la^{\frac{1}{2}}r_{\la}^2 + o\big(\la^{\frac{1}{2}}r_{\la}^2 \big),
	\end{equation}
and\begin{equation}\label{2-12}
	\begin{split}
		\int_{0}^{+\infty} r (\alpha u_\la^4 + \gamma v_\la^4
		+2\beta u_\la^2 v_\la^2) dr 
		= 4\frac{\alpha+\gamma-2\beta}{\alpha\gamma-\beta^2}A \la^{\frac{3}{2}}r_{\la} + o\big(\la^{\frac{3}{2}}r_{\la} \big),
	\end{split}
\end{equation}
where $A:=\int_{-\infty}^{+\infty} |w'(r)|^2 dr.$
\end{proposition}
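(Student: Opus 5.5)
The plan is to insert the decomposition \eqref{u_la} into each integral and split off the error: Proposition \ref{identityprop1jisuan} controls the perturbation $(\omega_{1,\la},\omega_{2,\la})$ in \eqref{2-10} and \eqref{2-12}, while \eqref{2-11} needs a separate estimate. The main terms are then computed explicitly by the change of variables $s=\sqrt{\mu}(r-r_\la)$, where $\mu=\la+P(r_\la)$ or $\mu=\la+Q(r_\la)$. Throughout I use $\mu^{\frac{3}{2}}=\la^{\frac{3}{2}}(1+O(\la^{-1}))$, $\la^{\frac12}r_\la\to+\infty$ from Lemma \ref{radialprop}, and the constants from the limit problem.

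\textbf{Constants from the limit problem.} From \eqref{sol_lim} and $\int_{\r}|w'|^2=A$,
\[\int_{\r}(|U'|^2+|V'|^2)=\va A .\]
From Lemma \ref{pohozaev_system}, taken on the whole line by evenness, $\int_{\r}(\alpha U^4+\gamma V^4+2\beta U^2V^2)=\frac43\int_{\r}(U^2+V^2)=4\va A$. From Lemma \ref{pohozaev_single}, $\int_{\r}w^2=3A$, hence
\[\int_{\r}U^2=\frac{3A(\gamma-\beta)}{\alpha\gamma-\beta^2},\qquad \int_{\r}V^2=\frac{3A(\alpha-\beta)}{\alpha\gamma-\beta^2}.\]

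\textbf{Proof of \eqref{2-10} and \eqref{2-12}.} By \eqref{2-7} and \eqref{yx}, the error is $O(\la^{\frac34}r_\la^{\frac12})\cdot o(\la^{\frac34}r_\la^{\frac12})=o(\la^{\frac32}r_\la)$. For the main term,
\[\int_0^{\infty}r|U_{r_\la,\la}'|^2dr=\mu^{\frac32}\int_{-\sqrt\mu r_\la}^{\infty}\Big(r_\la+\frac{s}{\sqrt\mu}\Big)|U'(s)|^2ds=\la^{\frac32}r_\la\int_{\r}|U'|^2+O(\la)+O\big(\la^{\frac32}r_\la e^{-c\sqrt\la r_\la}\big),\]
using the exponential decay of $U'$. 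Both remainders are $o(\la^{\frac32}r_\la)$ because $\la^{\frac12}r_\la\to\infty$. Adding the analogous $V$-term gives \eqref{2-10}.

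For \eqref{2-12} the same argument applies via \eqref{2-9}, with one extra point. In the mixed term $U_{r_\la,\la}^2V_{r_\la,\la}^2$ the two profiles carry different scalings, $\sqrt{\la+P(r_\la)}$ and $\sqrt{\la+Q(r_\la)}$. Their ratio is $1+O(\la^{-1})$, so replacing one by the other costs a relative $O(\la^{-1})$, using that $U'$ decays exponentially. This leads to $\la^{\frac32}r_\la\cdot 4\va A$.

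\textbf{Proof of \eqref{2-11} (main obstacle).} Here the weight is $r^2$ and the coefficient $P'$ is not of size $\la$, so Proposition \ref{identityprop1jisuan} does not apply directly. I would write
\[r^2P'u_\la^2=r^2P'\big(U_{r_\la,\la}^2+2U_{r_\la,\la}\omega_{1,\la}+\omega_{1,\la}^2\big)\]
and use boundedness of $P'$ together with the pointwise decay of Lemma \ref{jieyoujie} and \eqref{decay_w}. These give $r^2\le Cr_\la\, r\,(1+|r-r_\la|)$ effectively on the region carrying mass, with exponentially small tails. Then, by Cauchy--Schwarz,
\[\int r^2|U_{r_\la,\la}\omega_{1,\la}|\le Cr_\la\la^{-1}\|U_{r_\la,\la}\|_{1,\la}\|\omega_{1,\la}\|_{1,\la}=o(\la^{\frac12}r_\la^2),\]
and similarly $\int r^2\omega_{1,\la}^2\le Cr_\la\la^{-1}\|\omega_{1,\la}\|^2_{1,\la}=o(\la^{\frac12}r_\la^2)$. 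The factor $(1+|r-r_\la|)$ is absorbed by the exponential decay, at the cost of possibly shrinking $\eta$.

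For the main term,
\[\int_0^\infty r^2P'(r)U_{r_\la,\la}^2dr=\sqrt\mu\int\Big(r_\la+\frac{s}{\sqrt\mu}\Big)^2P'\Big(r_\la+\frac{s}{\sqrt\mu}\Big)U^2(s)\,ds.\]
Replacing $P'(r_\la+s/\sqrt\mu)$ by $P'(r_\la)$ needs $P'$ to be uniformly continuous on the relevant range, which I expect is the delicate point. I would obtain it from $P\in C^1$ with $P'$ bounded, if necessary assuming uniform continuity of $P'$ (or $P'$ Lipschitz). Failing that, I would split into $|s|\le\la^{\frac14}$, where $s/\sqrt\mu\to0$, and the exponentially small complement, and argue along the sequence $r_\la$. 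This yields $P'(r_\la)\la^{\frac12}r_\la^2\int_{\r}U^2+o(\la^{\frac12}r_\la^2)$. Combining with the $Q$-term and the value of $\int_{\r}U^2$ above gives \eqref{2-11}.
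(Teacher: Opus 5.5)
Your argument is essentially the paper's: \eqref{2-10} and \eqref{2-12} via Proposition \ref{identityprop1jisuan} plus the rescaling $s=\sqrt{\mu}(r-r_\la)$, and \eqref{2-11} via Cauchy--Schwarz on the cross terms plus replacing $P'(r)$ by $P'(r_\la)$ in the main term. The only difference is that you obtain the constant $4\va A$ in one step from \eqref{pohozaev1}, where the paper sums \eqref{2-24}--\eqref{2-26}.

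On the point you flag, the paper's \eqref{2-18} tacitly uses $|P'(r)-P'(r_\la)|\le C|r-r_\la|$, i.e.\ a Lipschitz (or at least uniformly continuous) $P'$, and this is not among the stated hypotheses. Your fallback of splitting and arguing along the sequence only works when $r_\la$ stays bounded. When $r_\la\to\infty$, continuity alone gives no uniform modulus (think of $P'(r)=\sin r^2$), so that extra assumption is genuinely needed.
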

	
	\begin{proof}
		By a direct calculation, we deduce
		\begin{equation}\label{rU'2}
			\begin{split}
				 \int_{0}^{+\infty}r|U_{r_{\la},\la}'|^{2}d r 
				={}& (\la+P(r_\la))^2  \int_{0}^{+\infty}r  \Big| U'\Big( \sqrt{\la+P(r_\la)} (r-r_{\la})\Big) \Big|^2 dr \\
				={}& (\la+P(r_\la))^{\frac{3}{2}} r_{\la}  \int_{0}^{+\infty}  | U' |^2 dr + O(\la) \\
				={}& \frac{\gamma-\beta}{\alpha\gamma-\beta^2} \la^{\frac{3}{2}} r_{\la} \Big(1+\frac{P(r_\la)}{\la}\Big)^{\frac{3}{2}}   \int_{-\infty}^{+\infty} |w'(r)|^2 dr + O(\la)  \\
				={}& \frac{\gamma-\beta}{\alpha\gamma-\beta^2} A \la^{\frac{3}{2}} r_{\la} + O(\la) .
\end{split}
\end{equation}
	From a similar argument, it yields
	\begin{equation}
    \label{rV'2}
		\begin{split}
			\int_{0}^{+\infty}r|V_{r_{\la},\la}'|^{2}d r 
			=& \frac{\alpha-\beta}{\alpha\gamma-\beta^2} A \la^{\frac{3}{2}} r_{\la} + O(\la). 
		\end{split}
	\end{equation}
Using \eqref{2-7}, \eqref{rU'2} and \eqref{rV'2}, We obtain \eqref{2-10} via the following step 
		\begin{equation*}
			\begin{split}
			 \int_{0}^{+\infty}r(|u_{\la}'|^{2}+|v_{\la}'|^{2}) d r 
			 &= \int_{0}^{+\infty}r(|U_{r_{\la},\la}'|^{2}+|V_{r_{\la},\la}'|^{2}) d r 
			 + o\big(\la^{\frac{3}{2}}r_{\la} \big) \\
			&= \frac{\alpha+\gamma-2\beta}{\alpha\gamma-\beta^2}A \la^{\frac{3}{2}}r_{\la} + o\big(\la^{\frac{3}{2}}r_{\la} \big).
		\end{split}
	\end{equation*}

		Combining with Lemma \ref{pohozaev}, we further obtain
		\begin{equation}\label{2-18}
			\begin{split}
				\int_{0}^{+\infty}r^{2}P'(r)U_{r_{\la},\la}^{2}d r 
				={}& (\la+P(r_\la))  \int_{0}^{+\infty}r^{2}P'(r_{\la})U^{2}\Big( \sqrt{\la+P(r_\la)} (r-r_{\la})\Big) d r  \\
				&+ O\Big( \la \int_{0}^{+\infty}r^{2}\cdot |r - r_{\la}|\cdot U^{2}\Big( \sqrt{\la+P(r_\la)} (r-r_{\la})\Big) d r\Big)\\
				={}& P'(r_\la)r_\la^2(\la+P(r_\la))^{\frac{1}{2}} \int_{-\infty}^{+\infty} U^2 dr +O(r_\la^2)   \\
				={}& 3\frac{\gamma-\beta}{\alpha\gamma-\beta^2} AP'(r_\la) \la^{\frac{1}{2}} r_\la^2 + O(r_\la^2),
			\end{split}
		\end{equation}
		\begin{equation}\label{2-19}
			\begin{split}
				\int_{0}^{+\infty}r^{2}P'(r)U_{r_{\la},\la} \omega_{1,\la} d r 
				\leq &  C \frac{1}{\la} 	\Big(\int_{0}^{+\infty}r^{3} \la |P'(r)|^2 U_{r_{\la},\la}^2  d r \Big)^{\frac{1}{2}} \|\omega_{1,\la}\|_{1,\la} 
				\leq C\la^{-\frac{1}{4}} r_\la^{\frac{3}{2}} \|\omega_{1,\la}\|_{1,\la}  , 
			\end{split}
		\end{equation}
	and 
		\begin{equation}\label{2-20}
			\begin{split}
				\int_{0}^{+\infty}r^{2}P'(r) \omega_{1,\la}^2 d r 
				=&  \frac{1}{\la} \int_{0}^{+\infty}r^{2}  P'(r) \la   \omega_{1,\la}^2  d r \\
				=&  \frac{r_\la}{\la} \int_{0}^{+\infty}r  P'(r) \la   \omega_{1,\la}^2  d r 
				+ \frac{1}{\la} \int_{0}^{+\infty}r(r-r_\la)  P'(r) \la   \omega_{1,\la}^2  d r \\
				=& O\Big( \frac{r_\la}{\la} \|\omega_{1,\la}\|_{1,\la} ^2\Big).
	\end{split}
 \end{equation}
		Along the same lines, we have
		\begin{equation}\label{2-21}
			\begin{split}
				\int_{0}^{+\infty}r^{2}Q'(r)V_{r_{\la},\la}^{2}d r 
				=& 3\frac{\alpha-\beta}{\alpha\gamma-\beta^2} AQ'(r_\la) \la^{\frac{1}{2}} r_\la^2 + O(r_\la^2),
			\end{split}
		\end{equation}
		\begin{equation}\label{2-22}
			\begin{split}
				\int_{0}^{+\infty}r^{2}Q'(r)V_{r_{\la},\la} \omega_{2,\la} d r 
				\leq C\la^{-\frac{1}{4}} r_\la^{\frac{3}{2}} \|\omega_{2,\la}\|_{2,\la}  , 
			\end{split}
		\end{equation}
		and 		\begin{equation}\label{2-23}
			\begin{split}
				\int_{0}^{+\infty}r^{2}Q'(r) \omega_{2,\la}^2 d r 
				=& O\Big( \frac{r_\la}{\la} \|\omega_{2,\la}\|_{2,\la} ^2\Big).
	\end{split}
\end{equation}
	Thus \eqref{2-11} follows from \eqref{2-8} and \eqref{2-18}-\eqref{2-23}.
		
		Additionally, by \eqref{2-9} and Lemma \ref{pohozaev}, we calculate that
		\begin{equation}\label{2-24}
				\int_{0}^{+\infty}ru_\la^{4}d r 
				= \int_{0}^{+\infty}r U_{r_{\la},\la}^4 dr + o(\la^{\frac{3}{2}} r_\la ) 
				=4 \Big(\frac{\gamma-\beta}{\alpha\gamma-\beta^2}\Big)^2 A \la^{\frac{3}{2}} r_\la  + o(\la^{\frac{3}{2}} r_\la ),
		\end{equation}
		\begin{equation}\label{2-25}
			\begin{split}
				\int_{0}^{+\infty}rv_\la^{4}d r 
				=&4 \Big(\frac{\alpha-\beta}{\alpha\gamma-\beta^2}\Big)^2 A \la^{\frac{3}{2}} r_\la  + o(\la^{\frac{3}{2}} r_\la ),
			\end{split}
		\end{equation}
and		\begin{equation}\label{2-26}
			\begin{split}
				\int_{0}^{+\infty}ru_\la^{2}v_\la^{2}d r 
				=&4 \frac{(\alpha-\beta)(\gamma-\beta)}{(\alpha\gamma-\beta^2)^2} A \la^{\frac{3}{2}} r_\la  + o(\la^{\frac{3}{2}} r_\la ).
			\end{split}
		\end{equation}
		Consequently, \eqref{2-12} follows from \eqref{2-24}-\eqref{2-26}.
	\end{proof}
	
	Furthermore, we can establish the following Pohozaev identity on $(u_{\la},v_{\la})$.
	
	\begin{proposition}\label{poho}
		If $(u_{\la},v_{\la})$ is a solution of \eqref{eqmain_radial}, then we have
		\begin{equation}\label{poho2}
			\begin{split}
				2 \int_{0}^{+\infty} r [|u_\la'|^2+|v_\la'|^2 ]dr 
				- \int_{0}^{+\infty} r^2 [P'(r)u_\la^2+Q'(r)v_\la^2] dr
				=&  \int_{0}^{+\infty} r (\alpha u_\la^4 + \gamma v_\la^4
				+2\beta  r u_\la^2 v_\la^2) dr .
			\end{split}
		\end{equation}
	\end{proposition}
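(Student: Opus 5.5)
The plan is to derive a Pohozaev-type identity by multiplying the radial equations \eqref{eqmain_radial} by $r^2u_\la'$ and $r^2v_\la'$, and then to eliminate the mass terms $\la\int r u_\la^2$ and $\int rP u_\la^2$ using the Nehari-type identity \eqref{identity1} of Proposition \ref{identityprop1}. A useful feature of dimension two is that the Laplacian contributes nothing to this identity. Indeed,
\[
-\big(u''+\tfrac1r u'\big) r^2u' = -\big(\tfrac12 r^2 |u'|^2\big)' + r|u'|^2 - r|u'|^2 = -\big(\tfrac12 r^2 |u'|^2\big)'.
\]
Hence after integration over $(0,+\infty)$ only boundary terms remain.

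The steps, in order, are as follows.

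First, I multiply $\eqref{eqmain_radial}_1$ by $r^2u_\la'$ and $\eqref{eqmain_radial}_2$ by $r^2v_\la'$, add the two equations, and integrate over $(0,R)$.

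Second, I treat each term separately.

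\emph{Potential terms.} Writing $(\la+P)u u' r^2 = (\la+P) r^2 (u^2/2)'$ and integrating by parts gives
\[
-\int r(\la+P)u_\la^2\,dr - \frac12\int r^2P'u_\la^2\,dr,
\]
and the analogous expression with $Q$ and $v_\la$.

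\emph{Self-interaction terms.} Since $\alpha u^3 r^2u' = \alpha r^2 (u^4/4)'$, integration by parts gives $-\frac{\alpha}{2}\int r u_\la^4\,dr$, and similarly $-\frac{\gamma}{2}\int r v_\la^4\,dr$.

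\emph{Coupling terms.} The key point is that the coupling terms combine into an exact derivative:
\[
\beta r^2(u v^2 u' + u^2 v v') = \beta r^2\big(\tfrac12 u^2v^2\big)',
\]
which yields $-\beta\int r u_\la^2v_\la^2\,dr$. This is the same cancellation that was used in \eqref{2-6}.

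Collecting everything, I expect to obtain
\[
2\int_0^{+\infty} r\big[(\la+P)u_\la^2+(\la+Q)v_\la^2\big]dr+\int_0^{+\infty} r^2\big[P'u_\la^2+Q'v_\la^2\big]dr=\int_0^{+\infty} r\big(\alpha u_\la^4+\gamma v_\la^4+2\beta u_\la^2v_\la^2\big)dr .
\]

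Third, I substitute \eqref{identity1} for the first integral on the left, that is,
\[
\int_0^{+\infty} r\big[(\la+P)u_\la^2+(\la+Q)v_\la^2\big]dr=\int_0^{+\infty} rF\,dr-\int_0^{+\infty} r\big(|u_\la'|^2+|v_\la'|^2\big)dr,
\]
where $F=\alpha u_\la^4+\gamma v_\la^4+2\beta u_\la^2v_\la^2$. Rearranging then gives \eqref{poho2}. I read the factor $r$ in front of $u_\la^2v_\la^2$ in the statement as a misprint, since the computation produces $2\beta u_\la^2v_\la^2$ inside $\int r(\cdots)\,dr$.

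The only genuine obstacle is justifying that all boundary terms vanish. These are
\[
\tfrac12 r^2|u_\la'|^2,\qquad \tfrac12 r^2(\la+P)u_\la^2,\qquad \tfrac{\alpha}{4}r^2u_\la^4,\qquad \tfrac{\beta}{2}r^2u_\la^2v_\la^2,
\]
together with their $v_\la$ counterparts.

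\emph{At $r=0$.} Each boundary term carries the factor $r^2$, and $u_\la$, $u_\la'$ are bounded near the origin because $u_\la$ is a regular radial solution, with $u_\la'(0)=0$. Hence all these terms vanish as $r\to0$.

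\emph{As $r=R\to+\infty$.} Here I use the exponential decay estimates \eqref{decay1}–\eqref{decay2} of Lemma \ref{jieyoujie}, together with the boundedness of $P$ and $Q$. These show that every boundary term is $O\big(R^2e^{-cR}\big)\to0$ for some $c>0$.

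The same decay also guarantees that all the integrals appearing in the identity converge, in particular the ones containing $r^2P'$ and $r^2Q'$, which is where the boundedness of $P'$ and $Q'$ enters. Letting $R\to+\infty$ therefore completes the argument.
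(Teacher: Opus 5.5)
Your proposal is correct and follows essentially the same route as the paper. You multiply by $r^2u_\lambda'$ and $r^2v_\lambda'$ and integrate by parts: the two Laplacian contributions cancel, and the coupling terms combine into $-\beta\int_0^{+\infty} r^2(u_\lambda^2v_\lambda^2)'\,dr=2\beta\int_0^{+\infty} r\,u_\lambda^2v_\lambda^2\,dr$. This gives the paper's intermediate identity \eqref{2-289}, after which you eliminate the mass terms using \eqref{identity1}, exactly as the paper does; you are also right that the extra factor $r$ in $2\beta r u_\lambda^2v_\lambda^2$ is a misprint, since the paper's own derivation yields $\int_0^{+\infty} r(\alpha u_\lambda^4+\gamma v_\lambda^4+2\beta u_\lambda^2v_\lambda^2)\,dr$.
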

	
	\begin{proof}
		Multiplying $r^{2}u_{\la}'$, $r^{2}v_{\la}'$ on both sides of $\eqref{eqmain_radial}_1$, $\eqref{eqmain_radial}_2$ respectively,  and then integrating over $[0,+\infty)$, we arrive at
		\begin{equation}\label{2-27}
			\int_{0}^{+\infty} r^2 \Big[-u_\la''-\frac{1}{r}u_\la'+(\la+P(r))u_\la -\alpha u_\la^3 -\beta u_\la v_\la^2 \Big] u_\la' dr =0,
		\end{equation}
	and\begin{equation}\label{2-271}
		\int_{0}^{+\infty} r^2 \Big[-v_\la''-\frac{1}{r}v_\la'+(\la+Q(r))v_\la -\gamma v_\la^3 -\beta u_\la^2 v_\la \Big] v_\la' dr =0.
	\end{equation}
		By integration by parts and the decay of $u_\la$, we have
		\begin{equation*}
			-\int_{0}^{+\infty} r^2 u_\la'' u_\la' dr  
			= \int_{0}^{+\infty} r |u_\la'|^2 dr,
		\end{equation*}
		\begin{equation*}
			\int_{0}^{+\infty} r^2 (\la+P(r))u_\la u_\la' dr  
			= - \int_{0}^{+\infty} r (\la+P(r))u_\la^2  dr
			 - \frac{1}{2} \int_{0}^{+\infty} r^2 P'(r) u_\la^2  dr   ,
		\end{equation*}
		\begin{equation*}
			\int_{0}^{+\infty} r^2 u_\la^3 u_\la' dr  
			= - \frac{1}{2} \int_{0}^{+\infty} r  u_\la^4  dr   ,
		\end{equation*}
		and
		\begin{equation*}
			\int_{0}^{+\infty} r^2 u_\la v_\la^2 u_\la' dr  
			=  \frac{1}{2} \int_{0}^{+\infty} r^2  (u_\la^2)' v_\la^2  dr  .
		\end{equation*}
		Therefore, \eqref{2-27} becomes
		\begin{equation}\label{2-28}
			 2 \int_{0}^{+\infty} r (\la+P(r))u_\la^2  dr 
			 = \alpha \int_{0}^{+\infty} r  u_\la^4 dr
			 	- \int_{0}^{+\infty} r^2 P'(r) u_\la^2  dr 
			 	- \beta  \int_{0}^{+\infty} r^2  (u_\la^2)' v_\la^2  dr.
		\end{equation}		
		A parallel argument gives
		\begin{equation}\label{2-29}
			2 \int_{0}^{+\infty} r (\la+Q(r))v_\la^2  dr 
			= \gamma \int_{0}^{+\infty} r  v_\la^4 dr
			- \int_{0}^{+\infty} r^2 Q'(r) v_\la^2  dr 
			- \beta  \int_{0}^{+\infty} r^2  u_\la^2 (v_\la^2)'  dr.
		\end{equation}
		Adding equations \eqref{2-28} and \eqref{2-29} yields 
		\begin{equation}\label{2-289}
			\begin{split}
				2 \int_{0}^{+\infty} r \big[(\la+P(r))u_\la^2  + (\la+Q(r))v_\la^2 \big] dr 
				=&  \int_{0}^{+\infty} r  (\alpha u_\la^4 + \gamma v_\la^4) dr
				+ 2\beta  \int_{0}^{+\infty} r  u_\la^2 v_\la^2  dr \\
				&- \int_{0}^{+\infty} r^2 [P'(r) u_\la^2 + Q'(r) v_\la^2]  dr.
			\end{split}
		\end{equation}
		Then combining \eqref{identity1} with \eqref{2-289}, we conclude \eqref{poho2}.
	\end{proof}
	
	We are now ready to establish Theorem \ref{diyigedingli}.
	
	\begin{proof}[\textbf{Proof of Theorem \ref{diyigedingli}}]
		First, from Lemma \ref{radialprop}, we find $\frac{\alpha+\gamma-2\beta}{\alpha\gamma-\beta^2} \to 0$ and
		\[
		\la^{\frac{1}{2}}r_\la \to +\infty, \,\,\text{ as } \la \to +\infty.
		\]
		Next, from Proposition \ref{lemmapoho} and \eqref{poho2}, we get
		\begin{equation*}
			2A \frac{\alpha+\gamma-2\beta}{\alpha\gamma-\beta^2} \la^{\frac{3}{2}}r_{\la} +  \frac{3A}{\alpha\gamma-\beta^2} [(\gamma-\beta)P'(r_\la)+(\alpha-\beta)Q'(r_\la)] \la^{\frac{1}{2}}r_{\la}^2 
			= o\big(\la^{\frac{1}{2}}r_{\la}^2 \big) + o\big(\la^{\frac{3}{2}}r_{\la} \big),
		\end{equation*}
	namely,
	\begin{equation}\label{2-13}
		\alpha+\gamma-2\beta +  \frac{3}{2} \Big[(\gamma-\beta) P'(r_\la)+ (\alpha-\beta) Q'(r_\la) \Big] \frac{r_\la}{\la} 
		= o\big(\frac{r_\la}{\la} \big) + o(1).
	\end{equation}
		If we suppose that $r_{\la}$ is bounded, then \eqref{2-13} gives us that
		\[
		1 = o(1),
		\]
		which is impossible. Hence it follows that $\lim\limits_{\la \to +\infty}r_{\la} = +\infty$.
		
		Now, if $\frac{r_\la}{\la} $ is finite, then \eqref{2-13} is equivalent to
\begin{equation*}\label{2-14}
			1 +  \frac{3}{2} \Big(\frac{\gamma-\beta}{\alpha+\gamma-2\beta}  P'(r_\la)+ \frac{\alpha-\beta}{\alpha+\gamma-2\beta} Q'(r_\la) \Big) \frac{r_\la}{\la} 
			= o(1).
		\end{equation*}
	Recalling the definition of $M_\la(r)$ given in \eqref{Mdingyi} 
	\[
	M_\la(r):=r\Big[1+\frac{1}{\la}\Big(\frac{\gamma-\beta}{\alpha+\gamma-2\beta} P(r) + \frac{\alpha-\beta}{\alpha+\gamma-2\beta} Q(r) \Big)\Big]^{\frac{3}{2}},
	\]
	we have
	\begin{equation*}
		\begin{split}
			 M_{\la}'(r_{\la})  
			={}& 
			\bigg(1+\frac{1}{\la}\Big(\frac{\gamma-\beta}{\alpha+\gamma-2\beta} P(r_\la) + \frac{\alpha-\beta}{\alpha+\gamma-2\beta} Q(r_\la) \Big)\bigg)^{\frac{1}{2}} \\
			&\cdot\bigg(
			1 +  \frac{3}{2} \frac{r_\la}{\la} \Big(\frac{\gamma-\beta}{\alpha+\gamma-2\beta}  P'(r_\la)+ \frac{\alpha-\beta}{\alpha+\gamma-2\beta} Q'(r_\la) \Big)  \\
			&\ \ \ \ +\frac{1}{\la}\Big(\frac{\gamma-\beta}{\alpha+\gamma-2\beta} P(r_\la) + \frac{\alpha-\beta}{\alpha+\gamma-2\beta} Q(r_\la) \Big)
			\bigg)
			\to 0, \quad \text{as } \la \to +\infty.
		\end{split}
	\end{equation*}
		
		On the other hand, if $\frac{r_\la}{\la} \to +\infty$ as $\la \to +\infty$, then  \eqref{2-13} is equivalent to
		\begin{equation*}
			(\gamma-\beta) P'(r_\la)+ (\alpha-\beta) Q'(r_\la) =o(1).
		\end{equation*}
		This completes the proof of Theorem \ref{diyigedingli}.
	\end{proof}
	
	\section{Existence}\label{cz1}
	In this section, we aim to prove the existence of blowing-up solutions of problem \eqref{eqmain}, which has the form
	\begin{equation*}
		(u_\la(r),v_\la(r))=(U_{r_{\la},\la}(r),V_{r_{\la},\la}(r))+(\omega_{1,\la}(r), \omega_{2,\la}(r)),
	\end{equation*}
	where $$U_{r_{\la},\la}(r)= \sqrt{\la +P(r_\la)} U(\sqrt{\lambda+P(r_\la)} (r - r_{\la})),$$
	$$V_{r_{\la},\la}(r)= \sqrt{\la +Q(r_\la)} V(\sqrt{\lambda+Q(r_\la)} (r - r_{\la}))$$ and $\|(\omega_{1,\la}(r), \omega_{2,\la}(r))\|_\la=o(\la^{\frac{3}{4}}r_{\la}^{\frac{1}{2}})$.

 
    We introduce the Hilbert space $H_r^1(\r^2)$
    	\[
    		H_{r}^{1}(\mathbb{R}^{2}) := \Big\{u\in H^{1}(\mathbb{R}^{2}):u(x) = u(y) \text{ if } |x| = |y|\Big\}.
    	\]
	For any $r_{\la}\in \mathbb{R}^{+}$, the space $E_{r_{\la},\la}$ is defined by
	\begin{equation*}
		E_{r_{\la},\la} = \left\{  (u,v)  \in H_{r}^{1}(\mathbb{R}^{2})\times H_{r}^{1}(\mathbb{R}^{2}) : 
	    \left\langle (u, v), (U_{r_{\la},\la}',V_{r_{\la},\la}') \right\rangle_{\la}= 0\right\}.
	\end{equation*}
	Now, for any $r_\la$ large enough,  we consider the following problem
	\begin{equation}
    \label{uv}
		L_{\lambda}(\omega_{1},\omega_{2}) = l_{\lambda} + R_{\lambda}(\omega_{1},\omega_{2}), \ \text{ for any } (\omega_{1},\omega_{2})  \in H_{r}^{1}(\mathbb{R}^{2})\times H_{r}^{1}(\mathbb{R}^{2}),
	\end{equation}
	where $L_{\lambda}$ is the bounded linear operator from $H_{r}^{1}(\mathbb{R}^{2}) \times H_{r}^{1}(\mathbb{R}^{2})$ to itself, defined by
	\begin{equation}\label{L}
		\begin{split}
			&\langle L_{\lambda}(\omega_{1},\omega_{2}),(\phi_{1},\phi_{2})\rangle_{\lambda} \\
			&=\int_{0}^{+\infty}r
			\Big[(\sum_{i=1}^{2}\omega_{i}'\phi_{i}') + (\lambda + P(r))\omega_{1}\phi_{1} +(\lambda + Q(r))\omega_{2}\phi_{2}  
			- 3\alpha U_{r_{\la},\la}^{2}\omega_{1}\phi_{1}
			- 3\gamma V_{r_{\la},\la}^{2}\omega_{2}\phi_{2}\Big] dr \\
			&\quad -\int_{0}^{+\infty}r 
			\Big( \beta V_{r_{\la},\la}^{2}\omega_{1}\phi_{1} + 2\beta U_{r_{\la},\la}V_{r_{\la},\la} \omega_{2}\phi_{1} \Big) dr 
			 -\int_{0}^{+\infty}r 
			\Big( \beta U_{r_{\la},\la}^{2}\omega_{2}\phi_{2} + 2\beta U_{r_{\la},\la}V_{r_{\la},\la} \omega_{1}\phi_{2} \Big) dr , 
		\end{split}
	\end{equation}
	$l_{\lambda}:=(l_{1,\lambda},l_{2,\lambda})$ is defined by
	\begin{align*}
		\langle l_\la,(\phi_{1},\phi_{2})\rangle_{\lambda} 
		= \int_{0}^{+\infty}r \Big[(P(r_\la)-P(r)) U_{r_{\la},\la}\phi_1 + (Q(r_\la)-Q(r)) V_{r_{\la},\la} \phi_2+ \frac{1}{r}U_{r_{\la},\la}' \phi_1+\frac{1}{r} V_{r_{\la},\la}' \phi_2 \Big] dr,
	\end{align*}
	and $R_{\lambda}(\omega_{1},\omega_{2}):=(R_{1,\lambda}(\omega_{1},\omega_{2}),R_{2,\lambda}(\omega_{1},\omega_{2}))$ is defined by
	\begin{align*}
		R_{1,\lambda}(\omega_{1},\omega_{2}) &= \alpha \Big(\omega_{1}^{3} + 3 U_{r_{\la},\la} \omega_{1}^{2}\Big) + \beta\Big(\omega_{1}\omega_{2}^{2} +  U_{r_{\la},\la} \omega_{2}^{2} + 2 V_{r_{\la},\la} \omega_{1}\omega_{2}\Big),\\
		R_{2,\lambda}(\omega_{1},\omega_{2}) &= \gamma \Big(\omega_{2}^{3} + 3 V_{r_{\la},\la} \omega_{2}^{2}\Big) + \beta\Big(\omega_{2}\omega_{1}^{2} +  V_{r_{\la},\la} \omega_{1}^{2} + 2 U_{r_{\la},\la} \omega_{1}\omega_{2}\Big).
	\end{align*}

	We have the following invertibility of the operator $L_{\la}$ on $E_{r_{\la},\la}$.
	
	\begin{proposition}\label{invertibility}There exist constants $\rho >0$ and $\la_0>0$, such that for any $\la >\la_0$, it holds that
		\begin{equation*}
			\|L_{\lambda}(\omega_{1},\omega_{2}) \|_\la \geq \rho \|(\omega_{1},\omega_{2})\|_\la,  \ \text{ for any } (\omega_{1},\omega_{2})  \in E_{r_{\la},\la}.
		\end{equation*}
	\end{proposition}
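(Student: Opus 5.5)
I argue by contradiction with a blow-up/rescaling argument, which is the standard route to invertibility in reduction methods. Suppose the claim fails. Then there are $\la_n\to+\infty$, points $r_n:=r_{\la_n}$ with $\la_n^{1/2}r_n\to+\infty$, and pairs $(\omega_{1,n},\omega_{2,n})\in E_{r_n,\la_n}$ such that
\[
\|L_{\la_n}(\omega_{1,n},\omega_{2,n})\|_{\la_n}=o(1)\|(\omega_{1,n},\omega_{2,n})\|_{\la_n}.
\]
The condition $\la_n^{1/2}r_n\to+\infty$ is natural in view of Lemma \ref{radialprop}; in the construction $r_\la$ is close to $y_\va\sim\la$. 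The natural normalization comes from the scaling of $U_{r_\la,\la}$, for which the norm is of order $\la^{3/4}r_\la^{1/2}$. So I normalize $\|(\omega_{1,n},\omega_{2,n})\|_{\la_n}^2=\la_n r_n$ and rescale:
\[
\tilde\omega_{i,n}(s):=\la_n^{-1/2}\,\omega_{i,n}\big(r_n+\la_n^{-1/2}s\big),\qquad s\in(-\la_n^{1/2}r_n,+\infty).
\]
With this scaling, $r\approx r_n$ on the region of interest, the weight $r$ in the measure factors out as $r_n$ (up to a factor $1+O(\la_n^{-1/2}|s|/r_n)$), and $\int(|\tilde\omega'|^2+\tilde\omega^2)\,ds$ stays bounded on compact sets. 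Extracting a subsequence, $\tilde\omega_{i,n}\rightharpoonup\omega_i$ in $H^1_{loc}(\r)$.

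\textbf{Step 1: the weak limit solves the limit linearized system and lies in the kernel.} Test $L_{\la_n}$ against $\phi_i(s)=\varphi_i\big(\la_n^{1/2}(r-r_n)\big)$ with $\varphi_i\in C_0^\infty(\r)$. Several terms drop out in the limit. The factors $(\la+P(r))/\la\to1$ and $(\la+Q(r))/\la\to1$ because $P,Q$ are bounded. The terms $P(r_n)$ inside $U_{r_n,\la_n}$ only perturb the scale by $O(\la_n^{-1})$, and $r/r_n\to1$ on compact $s$-sets. Using these, the limit $(\omega_1,\omega_2)$ satisfies the linearization of \eqref{eqlim} at $(U,V)$ in $H^1(\r)\times H^1(\r)$. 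By Lemma \ref{beta}, this forces
\[
(\omega_1,\omega_2)=c\,(\theta(\beta)w',\,w')
\]
up to the normalizing constants in \eqref{sol_lim}. Passing the orthogonality condition $\langle(\omega_{1,n},\omega_{2,n}),(U'_{r_n,\la_n},V'_{r_n,\la_n})\rangle_{\la_n}=0$ to the limit gives $\int (U''\omega_1+V''\omega_2)+(U'\omega_1+V'\omega_2)=0$ (after scaling), which means $(\omega_1,\omega_2)$ is orthogonal to $(U',V')$ in $H^1$.

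\textbf{The main obstacle.} The kernel direction $(\theta w',w')$ need not be parallel to $(U',V')$, since $\theta(\beta)$ is the constant from Lemma \ref{beta}. So I must check that $(U',V')$ is not orthogonal to that kernel element. Suppose $\theta\ne$ the ratio $\sqrt{(\gamma-\beta)/(\alpha-\beta)}$. Then the $H^1$ pairing of $(U',V')$ with $(\theta w',w')$ is a positive multiple of $\theta\sqrt{\gamma-\beta}+\sqrt{\alpha-\beta}$. My plan is to show directly that the kernel of the linearization at the synchronized solution is spanned by $(U',V')$ itself. Differentiating \eqref{eqlim} shows that $(U',V')$ always lies in the kernel, so Lemma \ref{beta} forces $(\theta w',w')\parallel(U',V')$. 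Hence $c=0$.

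\textbf{Step 2: upgrading weak convergence to strong convergence.} It remains to exclude loss of norm away from $s$ compact. I would pair $L_{\la_n}(\omega_n)$ with $\omega_n$ itself:
\[
o(\la_n r_n)=\|\omega_n\|_{\la_n}^2-\int_0^{+\infty} r\big(3\alpha U^2\omega_1^2+3\gamma V^2\omega_2^2+\beta V^2\omega_1^2+\beta U^2\omega_2^2+4\beta UV\omega_1\omega_2\big)dr,
\]
where $U,V$ stand for $U_{r_n,\la_n},V_{r_n,\la_n}$. By the exponential decay of $U$ and $V$, the integral splits into $|s|\le R$ and $|s|>R$. On $|s|\le R$ it is $o(\la_n r_n)$, by local compact embedding into $L^2_{loc}$ together with $\omega_i=0$. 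On $|s|>R$ it is at most $Ce^{-2R}\la_n r_n$. The weight $r$ is harmless here, because $r\le r_n+\la_n^{-1/2}|s|$ and the decay $e^{-|s|}$ absorbs the polynomial factor once $\la_n^{1/2}r_n\to\infty$. Letting $n\to\infty$ and then $R\to\infty$ gives $\la_n r_n=o(\la_n r_n)$, a contradiction, which proves the estimate.
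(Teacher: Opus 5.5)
Your argument follows the paper's route: contradiction, a blow-up at scale $\la_n^{-1/2}$ around $r_n$, identification of the weak limit with a kernel element via Lemma~\ref{beta}, elimination of that element by the orthogonality defining $E_{r_\la,\la}$, and finally pairing $L_{\la_n}\omega_n$ with $\omega_n$ and using the exponential decay of $U,V$. As written, however, your normalization is inconsistent and Step~2 fails. Since $\|U_{r_\la,\la}\|_\la\sim\la^{3/4}r_\la^{1/2}$, the matching choice is $\|\omega_n\|_{\la_n}^2=\la_n^{3/2}r_n$, not $\la_nr_n$. With your rescaling one has, up to a factor $1+O(\la_n^{-1})$,
\[
\|\omega_n\|_{\la_n}^2\approx\la_n^{3/2}r_n\int\Big(1+\frac{s}{\la_n^{1/2}r_n}\Big)\sum_{i=1}^{2}\big(|\tilde\omega_{i,n}'|^2+\tilde\omega_{i,n}^2\big)\,ds .
\]
So under $\|\omega_n\|_{\la_n}^2=\la_nr_n$ the functions $\tilde\omega_{i,n}$ have squared $H^1_{loc}$-norm $O(\la_n^{-1/2})$, and their weak limit is zero for trivial reasons. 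The part of $\int_0^{+\infty}rU_{r_n,\la_n}^2\omega_{1,n}^2\,dr$ over $|s|\le R$ equals $\la_n^{3/2}r_n\int_{|s|\le R}U^2\tilde\omega_{1,n}^2\,ds\,(1+o(1))$. This is only $O(\la_nr_n)$, the same order as $\|\omega_n\|_{\la_n}^2$, and the convergence $\tilde\omega_{i,n}\to0$ in $L^2_{loc}$ does not make it $o(\la_nr_n)$. Fix this by normalizing $\|\omega_n\|_{\la_n}^2=\la_n^{3/2}r_n$ and keeping your amplitude $\la_n^{-1/2}$. Then $\tilde\omega_{i,n}$ is bounded but not small in $H^1_{loc}$. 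Testing with $\varphi_i(\la_n^{1/2}(r-r_n))$, whose $\|\cdot\|_{\la_n}$-norm is of order $\la_n^{1/4}r_n^{1/2}$, and dividing by $\la_nr_n$ yields the limit system, and Steps~1 and~2 go through as you wrote them. The paper's own sketch is loose at the same point: it rescales $\omega_i(r/\sqrt{\la}+r_\la)$ under $\|\omega\|_\la=1$ with no amplitude factor.

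Your handling of the kernel is correct, and it supplies a step the paper passes over. Write $(U,V)=(aw,bw)$ with $a=\sqrt{(\gamma-\beta)/(\alpha\gamma-\beta^2)}$ and $b=\sqrt{(\alpha-\beta)/(\alpha\gamma-\beta^2)}$. Since $(U',V')=(aw',bw')$ solves the linearization of \eqref{eqlim}, the one-dimensionality in Lemma~\ref{beta} forces $\theta(\beta)=a/b$. The limiting orthogonality should read $\int(U''\omega_1'+V''\omega_2'+U'\omega_1+V'\omega_2)=0$; note the primes on $\omega_i$ in the first two terms. It then gives $c\,\|(U',V')\|_{H^1}^2=0$. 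The paper instead reduces the orthogonality to $\int rw^2w'(\omega_{\la,1}+\omega_{\la,2})\,dr=o(1)$, but the correct weights are $a$ and $b$, because $3\alpha U^2U'+\beta U'V^2+2\beta UVV'=3a\,w^2w'$ and symmetrically for the second component. It then asserts $d=0$, which requires exactly $a\theta(\beta)+b\neq0$, i.e.\ your observation.
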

	
	\begin{proof}
		We use a contradiction argument to prove it. Suppose that there exist $\la_{n}\to +\infty$, $r_{\la_n}\in \mathbb{R}^+$ and $\omega_n=(\omega_{1,n},\omega_{2,n})  \in E_{r_{n},\la_n}$ with $\|\omega_n\|_{\la_n}=1$ such that
		\begin{equation}\label{3-1}
			\|L_{\lambda_n}(\omega_{1,n},\omega_{2,n}) \|_{\la_n}
			= o_n(1).
		\end{equation}
		It is a bit standard to obtain a contradiction from \eqref{3-1}. So we just sketch the proof. For simplicity, we drop the subscript $n$. Then we have, for any $(\phi_1,\phi_2)\in E_{r_\lambda,\lambda}$,
		\begin{equation}\label{3-2}
			\begin{split}
				& o(1)\|(\phi_{1},\phi_{2})\|_\la \\
				& =\int_{0}^{+\infty}r
				\Big[(\sum_{i=1}^{2}\omega_{i}'\phi_{i}') + (\lambda + P(r))\omega_{1}\phi_{1} +(\lambda + Q(r))\omega_{2}\phi_{2}  
				- 3\alpha U_{r_{\la},\la}^{2}\omega_{1}\phi_{1}
				- 3\gamma V_{r_{\la},\la}^{2}\omega_{2}\phi_{2}\Big] dr \\
				&\quad -\int_{0}^{+\infty}r 
				\Big( \beta V_{r_{\la},\la}^{2}\omega_{1}\phi_{1} + 2\beta U_{r_{\la},\la}V_{r_{\la},\la} \omega_{2}\phi_{1} \Big) dr  
				-\int_{0}^{+\infty}r 
				\Big( \beta U_{r_{\la},\la}^{2}\omega_{2}\phi_{2} + 2\beta U_{r_{\la},\la}V_{r_{\la},\la} \omega_{1}\phi_{2} \Big) dr .
			\end{split}
		\end{equation}
		Let $(\phi_{1},\phi_{2}) = (\omega_{1},\omega_{2}) $ in \eqref{3-2}, we get
		\begin{equation}\label{3-3}
			\begin{split}
				 o(1)
				& =\int_{0}^{+\infty}r
				\Big[(\sum_{i=1}^{2}|\omega_{i}'|^2) + (\lambda + P(r))\omega_{1}^{2} +(\lambda + Q(r))\omega_{2}^{2}  
				- 3\alpha U_{r_{\la},\la}^{2}\omega_{1}^{2}
				- 3\gamma V_{r_{\la},\la}^{2}\omega_{2}^{2}\Big] dr \\
				&\quad -\int_{0}^{+\infty}r 
				\Big( \beta V_{r_{\la},\la}^{2}\omega_{1}^{2} + 2\beta U_{r_{\la},\la}V_{r_{\la},\la} \omega_{2}\omega_{1} \Big) dr 
				 -\int_{0}^{+\infty}r 
				\Big( \beta U_{r_{\la},\la}^{2}\omega_{2}^{2} + 2\beta U_{r_{\la},\la}V_{r_{\la},\la} \omega_{1}\omega_{2} \Big) dr,
			\end{split}
		\end{equation}
	namely,
		\begin{equation}\label{3-4}
			\begin{split}
				o(1)
				 ={}& 1- 3 \int_{0}^{+\infty}r
				\Big[
				 \alpha U_{r_{\la},\la}^{2}\omega_{1}^{2}
				+ \gamma V_{r_{\la},\la}^{2}\omega_{2}^{2}\Big] dr  
				-\int_{0}^{+\infty}r 
				\Big( \beta V_{r_{\la},\la}^{2}\omega_{1}^{2} + 2\beta U_{r_{\la},\la}V_{r_{\la},\la} \omega_{2}\omega_{1} \Big) dr \\
				& -\int_{0}^{+\infty}r 
				\Big( \beta U_{r_{\la},\la}^{2}\omega_{2}^{2} + 2\beta U_{r_{\la},\la}V_{r_{\la},\la} \omega_{1}\omega_{2} \Big) dr,
			\end{split}
		\end{equation}
		By the exponential decay of $U,V$, for fixed $R > 0$ large enough, we have
		\begin{equation*}
			U_{r_{\la},\la}^{a}, V_{r_{\la},\la}^{a}  = o_R(1) \la^{\frac{a}{2}}, \quad \text{in } (0, + \infty)\backslash [r_\la - R/\sqrt{\la},r_\la + R/\sqrt{\la}],
		\end{equation*}
	where $a>0$ is a constant. So we will obtain the contradiction, if we can prove
	\begin{equation}\label{3-5}
		\lambda \int_{r_\la - R/\sqrt{\la}}^{r_\la + R/\sqrt{\la}} r (\omega_{1}^{2}+\omega_{2}^{2})dr  = o(1) .
	\end{equation}
	
	To prove \eqref{3-5}, we define
		$\omega_{\la,i}(r) = \omega_{i}\Big(\frac{r}{\sqrt{\la}}+r_\la\Big).$
	Then by $\|\omega\|_\la=1$, we have
	\begin{equation*}
		\int_{0}^{+\infty} r (|\omega_{\lambda,i}'|^2 +|\omega_{\lambda,i}|^2)  dr \leq C, \quad i=1,2.
	\end{equation*}
	Thus, there exists $\bar{\omega}_{1}\in H_r^1(\r^2)$ and $\bar{\omega}_{2}\in H_r^1(\r^2)$ such that, as $\la\to +\infty$,
	\begin{equation*}
		\omega_{\la,i} \rightharpoonup \bar{\omega}_{i} \text{ weakly in } H_r^1(\r^2),
	\end{equation*}
	and
	\begin{equation*}
		\omega_{\la,i} \rightarrow \bar{\omega}_{i} \text{ strongly in } L^2_{r,loc}(\r^2).
	\end{equation*}
	It is easy to see that  \eqref{3-5} holds if $\bar{\omega}_{i}=0, i=1,2$.
	By \eqref{3-2}, it is standard to prove that $(\bar{\omega}_1,\bar{\omega}_2)$ satisfies the following system
	\begin{equation*}\label{3-6}
		\begin{cases}
			- \bar{\omega}_1'' +  \bar{\omega}_1 = 3\alpha U^2\bar{\omega}_1 + \beta V^2 \bar{\omega}_1  +2 \beta UV\bar{\omega}_2,&\text{in}\,\,\r,\\
			- \bar{\omega}_2'' +  \bar{\omega}_2= 3\gamma V^2\bar{\omega}_1 + \beta U^2 \bar{\omega}_1  +2 \beta UV\bar{\omega}_1,  &\text{in}\,\,\r.\\
		\end{cases}
	\end{equation*}
	Hence, the non-degeneracy of $(U,V)$ gives that
\[
(\bar{\omega}_1, \bar{\omega}_2 )= d \Big(\theta(\beta)\frac{\partial w}{\partial r}, \frac{\partial w}{\partial r}  \Big) .
\]

On the other hand, from  $(\omega_{1},\omega_{2})  \in E_{r_\la,\la}$,  we obtain
\begin{equation*}\label{2-1-24}\begin{split}
		0 = {}&  \big\langle (\omega_{1},\omega_{2}), (U_{r_{\la},\la}',V_{r_{\la},\la}') \big\rangle_{\la}  \\
		={}& \int_{0}^{+\infty} r \bigl( \omega_{1}' U_{r_{\la},\la}'' +  \omega_{2}' V_{r_{\la},\la}''
		 +(\la+P(r)) \omega_{1} U_{r_{\la},\la}'  +(\la+Q(x)) \omega_{2} V_{r_{\la},\la}'  \bigr) dr    \\
		={}& \int_{0}^{+\infty}r  \Bigg\{
		\Big[3\alpha U_{r_{\la},\la}^{2}U_{r_{\la},\la}' 
		+\beta U_{r_{\la},\la}' V_{r_{\la},\la}^{2} 
		+ 2\beta U_{r_{\la},\la} V_{r_{\la},\la} V_{r_{\la},\la}' \Big] \omega_{1} 
		+ \Big[3\gamma V_{r_{\la},\la}^2 V_{r_{\la},\la}' 
		+\beta V_{r_{\la},\la}'  U_{r_{\la},\la}^2 \\
		&\quad\qquad+ 2\beta U_{r_{\la},\la} V_{r_{\la},\la} U_{r_{\la},\la}'  \Big] \omega_{2}  
		+ (P(r)-P(r_\la)) \omega_{1} U_{r_{\la},\la}'
		+ (Q(r)-Q(r_\la)) \omega_{2} V_{r_{\la},\la}'  \Bigg\} dr \\
		={}& \lambda  \biggl(  \int_{0}^{+\infty}r  
		\Big[
        \Big(3\alpha U^{2}  U' +\beta U' V^2 +2\beta UVV' \Big) \omega_{\la,1}
		+\Big(3\gamma V^{2}  V' +\beta V' U^2 +2\beta UVU' \Big) \omega_{\la,2}
        \Big] dr  +o(1)\biggr).
	\end{split}
\end{equation*}
By the definition of $(U,V)$, we have 
\begin{equation*}\label{3-7}
	\int_{0}^{+\infty}r  w^{2}w' (\omega_{\la,1}  + \omega_{\la,2}  ) dr= o(1),
\end{equation*}
which implies that
\begin{equation*}\label{3-8}
	\int_{0}^{+\infty}r  w^{2}w' (\bar{\omega}_{1}  + \bar{\omega}_{2}  ) dr= 0.
\end{equation*}
Hence, $d = 0$. This shows that $\bar{\omega}_{i}=0$, $i=1,2$. 
The proof is completed.
	\end{proof}
	
%
	
	From now on, we always assume that $r_\la\in [\la^{1-\theta}, \la^{1+\theta}]$, where $\theta>0$ is a fixed small constant. 
	We give the following lemmas which give the estimates of $l_{\la}$ and $R_{\la}(\omega_{1},\omega_{2})$.
	
	\begin{lemma}\label{l_aaa}  There is a constant $C>0$, independent of $\la$, such that
		\begin{equation}\label{l}
			\|l_{\la}\|_{\la} =O \Bigl(\la^{-\frac{1}{4}+\frac{\theta}{2}}\Bigr) .
		\end{equation}
	\end{lemma}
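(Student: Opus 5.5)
The plan is to estimate $\|l_\la\|_\la$ by duality. By definition,
\[
\|l_\la\|_\la=\sup_{\|(\phi_1,\phi_2)\|_\la=1}\big|\langle l_\la,(\phi_1,\phi_2)\rangle_\la\big|,
\]
and $\langle l_\la,(\phi_1,\phi_2)\rangle_\la$ splits into two pieces, treated separately for each component:
\begin{itemize}
\item[(i)] the potential piece $\int_0^{+\infty} r\,(P(r_\la)-P(r))U_{r_\la,\la}\phi_1\,dr$, together with its $Q$ analogue;
\item[(ii)] the curvature piece $\int_0^{+\infty} U_{r_\la,\la}'\phi_1\,dr$, where the factor $r$ cancels the $\frac1r$, together with its $V$ analogue.
\end{itemize}
Both pieces rest on one elementary input. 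Since $\la+P(r)\ge \la/2$ for large $\la$ ($P$ is bounded), we have
\[
\int_0^{+\infty} r\phi_1^2\,dr\le C\la^{-1}\|\phi_1\|_{1,\la}^2 .
\]
So in both cases I apply the weighted Cauchy–Schwarz inequality with $\phi_1$ paired against $r^{1/2}\phi_1$, and then compute the resulting $U$-integral by the change of variables $s=\sqrt{\la+P(r_\la)}(r-r_\la)$, using the exponential decay of $U$.

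For (i), I use $|P(r)-P(r_\la)|\le \|P'\|_\infty|r-r_\la|$, which holds because $P'$ is bounded. Cauchy–Schwarz then gives
\[
\Big|\int_0^{+\infty} r(P(r_\la)-P(r))U_{r_\la,\la}\phi_1\,dr\Big|
\le C\Big(\int_0^{+\infty} r|r-r_\la|^2U_{r_\la,\la}^2\,dr\Big)^{1/2}\la^{-1/2}\|\phi_1\|_{1,\la}.
\]
After rescaling, the integral in brackets is $\la\cdot\la^{-3/2}\int(r_\la+s\la^{-1/2})s^2U^2(s)\,ds=O(r_\la\la^{-1/2})$. This yields the bound $O(r_\la^{1/2}\la^{-3/4})$, and with $r_\la\le\la^{1+\theta}$ it becomes $O(\la^{-1/4+\theta/2})$.

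For (ii), I write $U_{r_\la,\la}'\phi_1=(r^{-1/2}U_{r_\la,\la}')(r^{1/2}\phi_1)$. This gives the bound
\[
\Big(\int_0^{+\infty}\frac{|U_{r_\la,\la}'|^2}{r}\,dr\Big)^{1/2}\la^{-1/2}\|\phi_1\|_{1,\la}.
\]
Since $|U_{r_\la,\la}'|^2\sim\la^2|U'(s)|^2$, the integral is $O(\la^{3/2}r_\la^{-1})$. The resulting bound is $O(\la^{1/4}r_\la^{-1/2})$, which by $r_\la\ge\la^{1-\theta}$ is again $O(\la^{-1/4+\theta/2})$. The $V$-components are handled identically, with $Q$ and $V$ in place of $P$ and $U$.

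The only point needing care is the singular weight $1/r$ near $r=0$ in (ii); this is the step I expect to be the main obstacle, though it should be routine. I would split $(0,+\infty)$ into $[r_\la/2,2r_\la]$ and its complement. On the interval, $1/r\le 2/r_\la$ and the computation above applies directly. On the complement, $|U_{r_\la,\la}'|\le C\la e^{-c\sqrt\la r_\la}$, and $\sqrt\la r_\la\ge\la^{3/2-\theta}\to\infty$. If needed, I would instead estimate $\int_0^{r_\la/2}|U'_{r_\la,\la}\phi_1|\,dr$ using the one-dimensional bound $|\phi_1(r)|$ controlled via radial $H^1$ away from $0$, or simply near $0$ by $\int_0^{1}|\phi_1|\le C\|\phi_1\|_{H^1(\r^2)}$. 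Either way this region gives an exponentially small contribution, which is absorbed into $O(\la^{-1/4+\theta/2})$. The far region $r\ge 2r_\la$ is handled the same way, since there $1/r\le 1/r_\la$ and the exponential decay applies.

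Combining the estimates from (i) and (ii) for both components gives $\|l_\la\|_\la=O(\la^{-1/4+\theta/2})$, with a constant independent of $\la$.
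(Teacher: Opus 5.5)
Your proposal is correct and follows the paper's proof essentially step for step. Both use duality, the bound $\int_0^{+\infty} r\phi_1^2\,dr\le C\la^{-1}\|\phi_1\|_{1,\la}^2$, and weighted Cauchy--Schwarz against $r^{1/2}\phi_1$, with $|P(r)-P(r_\la)|\le C|r-r_\la|$ for the potential piece and $r^{-1/2}U'_{r_\la,\la}$ for the curvature piece, and then rescale to get $\la^{-3/4}r_\la^{1/2}+\la^{1/4}r_\la^{-1/2}=O(\la^{-1/4+\theta/2})$.

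Your extra care at $r=0$ is actually necessary, not optional. Since $U'_{r_\la,\la}(0)=(\la+P(r_\la))U'(-\sqrt{\la+P(r_\la)}\,r_\la)\neq 0$, the integral $\int_0^{+\infty}r^{-1}|U'_{r_\la,\la}|^2\,dr$ used in the paper diverges logarithmically at the origin. So, as you propose, the piece over $[0,1]$ must be bounded by $\sup_{[0,1]}|U'_{r_\la,\la}|\int_0^1|\phi_1|\,dr\le C\la e^{-c\sqrt{\la}\,r_\la}\|\phi_1\|_{1,\la}$. The estimate on $\int_0^1|\phi_1|\,dr$ follows from H\"older with $|x|^{-1}\in L^{4/3}(B_1)$ and $H^1(\mathbb{R}^2)\subset L^4(\mathbb{R}^2)$, and the weighted Cauchy--Schwarz is then applied only away from $0$.
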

	
	\begin{proof}
		For any $(\varphi,\psi) \in 
        H^1(\r^2)\times H^1(\r^2)$, we have
		\[
		\langle (l_{1,\la}, l_{2,\la}), (\varphi,\psi)\rangle_{\la} = \langle l_{1,\la}, \varphi\rangle_{1,\la} +  \langle l_{2,\la}, \psi \rangle_{2,\la}.
		\]
		By direct calculation, we have
		\begin{equation*}
		\begin{split}
			\langle l_{1,\la}, \varphi\rangle_{1,\la} ={}& \int_{0}^{+\infty} r \Big[(P(r_\la)-P(r))U_{r_\la, \la}  
			+ \frac{1}{r} U_{r_\la, \la}' \Big]  \varphi dr \\
			\leq{}&  \frac{C}{\la} \bigg[ \Big(\int_{0}^{+\infty} r \la |P(r_\la)-P(r)|^2 |U_{r_\la, \la}|^2  dr \Big)^{\frac{1}{2}}
			+  \Big(\int_{0}^{+\infty} \frac{1}{r} \la |U_{r_\la, \la}'|^2 dr \Big)^{\frac{1}{2}}  \bigg] 
			\cdot \|\varphi\|_{1,\la}  \\
			\leq{}&  C \la^{-\frac{1}{2}} \bigg[ \Big(\int_{0}^{+\infty} r  |r_\la-r|^2 |U_{r_\la, \la}|^2  dr \Big)^{\frac{1}{2}}
			+  \Big(\int_{0}^{+\infty} \frac{1}{r}  |U_{r_\la, \la}'|^2 dr \Big)^{\frac{1}{2}}  \bigg] 
			\cdot \|\varphi\|_{1,\la}.
		\end{split}
	\end{equation*}
		Since 
		\begin{equation*}
			\begin{split}
				\int_{0}^{+\infty} r  |r_\la-r|^2 |U_{r_\la, \la}|^2  dr  
				={}& (\la+P(r_\la))^{-\frac{1}{2}}\int_{-\sqrt{\la+P(r_\la)}r_\la}^{+\infty} (\frac{r}{\sqrt{\la+P(r_\la)}}+r_\la)  |r|^2 |U(r)|^2  dr \\
				\leq{}&C \la^{-\frac{1}{2}} r_\la \int_{-\infty}^{+\infty} r^2U^2(r) dr + O(\la^{-1}) \\
				\leq{}& C  \la^{-\frac{1}{2}} r_\la,
			\end{split}
		\end{equation*}
		and 
		\begin{equation*}
			\begin{split}
				\int_{0}^{+\infty} \frac{1}{r}  |U_{r_\la, \la}'|^2 dr 
				={}& \int_{r_\la-R}^{r_\la+R} \frac{1}{r}  |U_{r_\la, \la}'|^2 dr + O(e^{-\frac{1}{4}\sqrt{\la}R}) \\
				={}& \int_{r_\la-R}^{r_\la+R} \frac{1}{r_\la}  |U_{r_\la, \la}'|^2 dr + O(\la^{\frac{3}{2}}r_\la^{-2}) \\
				\leq{}& C \la^{\frac{3}{2}}r_\la^{-1} ,
			\end{split}
		\end{equation*}
	we therefore have 
	\begin{equation*}
		\begin{split}
			\langle l_{1,\la}, \varphi\rangle_{1,\la} 
			\leq  C \la^{-\frac{1}{2}} \bigg[ \la^{-\frac{1}{4}} r_\la^{\frac{1}{2}}
			+ \la^{\frac{3}{4}}r_\la^{-\frac{1}{2}}  \bigg] 
			\cdot \|\varphi\|_{1,\la} 
			\leq  C  \la^{-\frac{1}{4}+\frac{\theta}{2}}  \|\varphi\|_{1,\la}.  \\
		\end{split}
	\end{equation*}
	Similarly, we can obtain that 
		\begin{equation*}
		\begin{split}
			\langle l_{2,\la}, \psi \rangle_{2,\la} 
			\leq  C  \la^{-\frac{1}{4}+\frac{\theta}{2}}  \|\psi\|_{2,\la}.  \\
		\end{split}
	\end{equation*}
Then we conclude \eqref{l}.
	\end{proof}
	
\begin{lemma}\label{R_aaa} There is a constant $C>0$, independent of $\la$, such that
	\begin{equation*}\label{R}
		\|R_{\la}(\omega_1,\omega_2)\|_{\la} = O\Big(\la^{-1} \|(\omega_{1},\omega_{2})\|_{\la}^3 +\la^{-\frac{1}{2}}\|(\omega_{1},\omega_{2})\|_{\la}^2 
		\Big).
	\end{equation*}
Moreover, if $\|(\omega_{1},\omega_{2})\|_{\la}\leq C\|l_\la\|_\la$, then we have that
	\begin{equation*}
		\|R_{\la}((\omega_1,\omega_2))\|_{\la} = O(\la^{-1+\theta}). \end{equation*}
\end{lemma}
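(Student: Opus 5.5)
The estimate is a duality bound. The norm $\|R_\la(\omega_1,\omega_2)\|_\la$ is the supremum of $\int_0^{+\infty} r\,[R_{1,\la}\varphi+R_{2,\la}\psi]\,dr$ over $\|(\varphi,\psi)\|_\la\le 1$. So I would bound each monomial of $R_{1,\la}$ and $R_{2,\la}$ against a test function and keep track of the powers of $\la$ through the scaling of the norm.

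\textbf{Step 1: a weighted Sobolev inequality.} The weight $\la$ in $\|\cdot\|_{1,\la}$ gives, for radial $f$,
\[
\int_0^{+\infty} r f^2\,dr\le \la^{-1}\|f\|_{1,\la}^2 .
\]
The two-dimensional Gagliardo--Nirenberg inequality $\|f\|_{L^4(\r^2)}^4\le C\|f\|_{L^2}^2\|\nabla f\|_{L^2}^2$, written in radial form, then gives
\[
\int_0^{+\infty} r f^4\,dr\le C\la^{-1}\|f\|_{1,\la}^4 .
\]

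\textbf{Step 2: the quadratic terms.} These are $U_{r_\la,\la}\omega_1^2$, $V_{r_\la,\la}\omega_1\omega_2$, and the analogous terms. Since $|U_{r_\la,\la}|,|V_{r_\la,\la}|\le C\sqrt\la$, H\"older gives
\[
\int r\,U_{r_\la,\la}\omega_1^2\varphi\,dr\le C\sqrt\la\,\Big(\int r\omega_1^4\Big)^{1/2}\Big(\int r\varphi^2\Big)^{1/2}\le C\sqrt\la\cdot\la^{-1/2}\|\omega\|_\la^2\cdot\la^{-1/2}\|\varphi\|_{1,\la}.
\]
This is $C\la^{-1/2}\|\omega\|_\la^2\|\varphi\|_{1,\la}$.

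\textbf{Step 3: the cubic terms.} These are $\omega_1^3$, $\omega_1\omega_2^2$, and the analogous terms. H\"older with exponents $(4/3,4)$ gives
\[
\int r\,|\omega_1|^3|\varphi|\,dr\le \Big(\int r\omega_1^4\Big)^{3/4}\Big(\int r\varphi^4\Big)^{1/4}\le C\la^{-3/4}\|\omega\|_\la^3\,\la^{-1/4}\|\varphi\|_\la .
\]
This is $C\la^{-1}\|\omega\|_\la^3\|\varphi\|_\la$. The mixed products are handled the same way, using $|ab|\le a^2+b^2$ where needed.

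Summing over all monomials and both components gives the first assertion.

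\textbf{Step 4: the second assertion.} Under $\|(\omega_1,\omega_2)\|_\la\le C\|l_\la\|_\la$, Lemma \ref{l_aaa} gives $\|\omega\|_\la=O(\la^{-1/4+\theta/2})$. Substituting this into the first assertion:
\[
\la^{-1/2}\|\omega\|_\la^2=O(\la^{-1+\theta}),\qquad \la^{-1}\|\omega\|_\la^3=O(\la^{-7/4+3\theta/2}),
\]
and the second is smaller. Hence $\|R_\la\|_\la=O(\la^{-1+\theta})$.

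\textbf{Main obstacle.} The only delicate point is Step 1. One has to make sure the Gagliardo--Nirenberg constant in the $r\,dr$ measure is independent of $\la$ and $r_\la$. This holds because the radial integrals equal $(2\pi)^{-1}$ times the integrals over $\r^2$, and the inequality on $\r^2$ has a universal constant. Because $P,Q$ are bounded, the weights $\la+P$ and $\la+Q$ are comparable to $\la$ for large $\la$, so this bound then feeds directly into the scaling above.
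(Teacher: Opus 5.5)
Your proposal is correct and follows essentially the same route as the paper. You pair $R_\lambda$ against test functions, use $|U_{r_\lambda,\lambda}|,|V_{r_\lambda,\lambda}|\le C\sqrt{\lambda}$, and combine H\"older with a $\lambda$-scaled embedding. The only difference is cosmetic: you use the two-dimensional Gagliardo--Nirenberg $L^4$ bound throughout, whereas the paper rescales $y=\sqrt{\lambda}x$ to get $\|u\|_{L^q(\mathbb{R}^2)}\le C\lambda^{-1/q}\|u\|_{i,\lambda}$ and pairs $L^6$ with $L^2$ for the cubic terms. Your Step 4 also spells out the substitution behind the second assertion, which the paper leaves implicit.
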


\begin{proof}
	For any $(\varphi,\psi) \in 
    H^1(\r^2)\times H^1(\r^2)$, we have 
	\[
	\langle R_{\la}(\omega_1,\omega_2), (\varphi,\psi) \rangle_\la
	=\langle R_{1,\la}(\omega_1,\omega_2),\varphi \rangle_{1,\la}
	+\langle R_{2,\la}(\omega_1,\omega_2), \psi \rangle_{2,\la}.
	\]
	By Sobolev embedding, it is easy to compute that, for any $q\geq2,\ i=1,2$,
	\begin{equation}\label{Holder}
		\begin{split}
			\|u\|_{L^{q}(\mathbb{R}^{2})} = {}& \la^{-\frac{1}{q}}\Bigl( \int_{\mathbb{R}^{2}} \big|u(\frac{y}{\sqrt{\la}})\big|^{q}dy \Bigr)^{\frac{1}{q}} \\
			\leq {}& C  \la^{-\frac{1}{q}}\Bigl(   \int_{\mathbb{R}^{2}} \Big[ \big|\nabla_{y}( u(\frac{y}{\sqrt{\la}})) \big|^{2} + u^{2}(\frac{y}{\sqrt{\la}})\Big] dy \Bigr)^{\frac{1}{2}}\\
			\leq {}& C\la^{-\frac{1}{q}} \|u\|_{i,\la}.
		\end{split}
	\end{equation}
	Since 
	\[
	R_{1,\lambda}(\omega_{1},\omega_{2}) = \alpha \Big(\omega_{1}^{3} + 3 U_{r_{\la},\la} \omega_{1}^{2}\Big) + \beta\Big(\omega_{1}\omega_{2}^{2} +  U_{r_{\la},\la} \omega_{2}^{2} + 2 V_{r_{\la},\la} \omega_{1}\omega_{2}\Big),
	\]
 by H\"older inequality, we have
	\[ 
	\begin{split}
		&\langle R_{1,\la}(\omega_1,\omega_2), \varphi \rangle_{1,\la} \\
		={}& \int_{0}^{+\infty} r \bigg[
		\alpha \Big(\omega_{1}^{3} + 3 U_{r_{\la},\la} \omega_{1}^{2}\Big) + \beta\Big(\omega_{1}\omega_{2}^{2} +  U_{r_{\la},\la} \omega_{2}^{2} + 2 V_{r_{\la},\la} \omega_{1}\omega_{2}\Big)
		\bigg]\varphi dr \\
		={}& \frac{1}{2\pi} \int_{\mathbb{R}^{2}}  \bigg[
		\alpha \Big(\omega_{1}^{3} + 3 U_{r_{\la},\la} \omega_{1}^{2}\Big) + \beta\Big(\omega_{1}\omega_{2}^{2} +  U_{r_{\la},\la} \omega_{2}^{2} + 2 V_{r_{\la},\la} \omega_{1}\omega_{2}\Big)
		\bigg]\varphi dx \\
		\leq{}& C \Big(\int_{\mathbb{R}^{2}} \omega_{1}^6 dx\Big)^{\frac{1}{2}} \Big(\int_{\mathbb{R}^{2}} \varphi^2 dx\Big)^{\frac{1}{2}}
		+ C \la^{\frac{1}{2}} \Big(\int_{\mathbb{R}^{2}} \omega_{1}^4 dx\Big)^{\frac{1}{2}} \Big(\int_{\mathbb{R}^{2}} \varphi^2 dx\Big)^{\frac{1}{2}} \\
		&+ C \Big(\int_{\mathbb{R}^{2}} \omega_{1}^6 dx\Big)^{\frac{1}{6}} \Big(\int_{\mathbb{R}^{2}} \omega_{2}^6 dx\Big)^{\frac{1}{3}} \Big(\int_{\mathbb{R}^{2}} \varphi^2 dx\Big)^{\frac{1}{2}} 
		+ C \la^{\frac{1}{2}} \Big(\int_{\mathbb{R}^{2}} \omega_{2}^4 dx\Big)^{\frac{1}{2}} \Big(\int_{\mathbb{R}^{2}} \varphi^2 dx\Big)^{\frac{1}{2}} \\
		& + C \la^{\frac{1}{2}} \Big(\int_{\mathbb{R}^{2}} \omega_{1}^3 dx\Big)^{\frac{1}{3}} \Big(\int_{\mathbb{R}^{2}} \omega_{2}^3 dx\Big)^{\frac{1}{3}} \Big(\int_{\mathbb{R}^{2}} \varphi^3 dx\Big)^{\frac{1}{3}} \\
		\leq {}& C\Big(\la^{-1} \|\omega_{1}\|_{1,\la}^3 +\la^{-\frac{1}{2}}\|\omega_{1}\|_{1,\la}^2 
		+ \la^{-1} \|\omega_{1}\|_{1,\la} \|\omega_{2}\|_{2,\la}^2
		+ \la^{-\frac{1}{2}}\|\omega_{2}\|_{2,\la}^2 
		+ \la^{-\frac{1}{2}}\|\omega_{1}\|_{1,\la} \|\omega_{2}\|_{2,\la} \Big)\|\varphi\|_{1,\la}  \\
		\leq {}& C\Big(\la^{-1} \|(\omega_{1},\omega_{2})\|_{\la}^3 +\la^{-\frac{1}{2}}\|(\omega_{1},\omega_{2})\|_{\la}^2 
		 \Big)\|\varphi\|_{1,\la}
	\end{split}
	\]
	Similarly, we have 
	\[ 
	\begin{split}
		\langle R_{2,\la}(\omega_1,\omega_2), \psi \rangle_{2,\la}
		\leq C\Big(\la^{-1} \|(\omega_{1},\omega_{2})\|_{\la}^3 +\la^{-\frac{1}{2}}\|(\omega_{1},\omega_{2})\|_{\la}^2 
		\Big)\|\psi\|_{2,\la}.
	\end{split}
	\]
	Thus, we complete the proof of Lemma \ref{R_aaa}.
\end{proof}

In order to apply the contraction mapping theorem, for $\theta$ small enough, we define the set
\[E:=\left\{(\omega_1,\omega_2):(\omega_1,\omega_2)\in E_{r_\lambda,\lambda}, \|(\omega_1,\omega_2)\|_\lambda\leq\lambda^{-\frac{1}{4}+\theta}\right\}.\]
From Proposition \ref{invertibility}, for any $(\omega_1,\omega_2)\in E_{r_\lambda,\lambda} $, we take
\[B(\omega_1,\omega_2):=L_\lambda^{-1}(l_\lambda+R_\lambda(\omega_1,\omega_2)).\]
\begin{proposition}\label{yasuo}
   For $\lambda$ large enough, there exist some small constant $\theta>0$ and  $(\omega_1,\omega_2)\in E_{r_\lambda,\lambda}$ satisfying \eqref{uv} and $\|(\omega_1,\omega_2)\|_{\la} =O \Bigl(\la^{-\frac{1}{4}+\frac{\theta}{2}}\Bigr)$.
\end{proposition}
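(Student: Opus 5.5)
We will obtain $(\omega_1,\omega_2)$ as a fixed point of the map $B$ restricted to the set $E$, using the contraction mapping theorem. To be precise, $B$ should be understood as $L_\la^{-1}$ composed with the orthogonal projection $\Pi$ of $H_r^1(\r^2)\times H_r^1(\r^2)$ onto $E_{r_\la,\la}$. By Proposition \ref{invertibility}, $\Pi L_\la$ is injective on $E_{r_\la,\la}$ with $\|(\Pi L_\la)^{-1}\|\le \rho^{-1}$. Since $\Pi L_\la$ is self-adjoint on $E_{r_\la,\la}$ (the bilinear form \eqref{L} is symmetric) and Fredholm (identity minus a compact perturbation after localization), injectivity gives invertibility. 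Hence $B(\omega)=(\Pi L_\la)^{-1}\Pi(l_\la+R_\la(\omega))$ is well defined, and
\[
\|B(\omega)\|_\la\le \rho^{-1}\big(\|l_\la\|_\la+\|R_\la(\omega)\|_\la\big).
\]
The equation \eqref{uv} is then solved modulo the one-dimensional direction spanned by $(U_{r_\la,\la}',V_{r_\la,\la}')$; that direction is handled later by choosing $r_\la$ via the Pohozaev identity.

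\textbf{Step 1: $B$ maps $E$ into $E$.} Let $\omega\in E$, so $\|\omega\|_\la\le\la^{-\frac14+\theta}$. Lemma \ref{R_aaa} gives
\[
\|R_\la(\omega)\|_\la\le C\big(\la^{-1}\la^{-\frac34+3\theta}+\la^{-\frac12}\la^{-\frac12+2\theta}\big)\le C\la^{-1+2\theta}.
\]
For $\theta$ small this is much smaller than $\la^{-\frac14+\frac\theta2}$. Lemma \ref{l_aaa} gives $\|l_\la\|_\la\le C\la^{-\frac14+\frac\theta2}$. Therefore
\[
\|B\omega\|_\la\le C\la^{-\frac14+\frac\theta2}\le \la^{-\frac14+\theta}
\]
for $\la$ large, so $B(E)\subset E$.

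\textbf{Step 2: $B$ is a contraction on $E$.} Since $l_\la$ is independent of $\omega$, it suffices to bound $\|R_\la(\omega)-R_\la(\tilde\omega)\|_\la$. Every term of $R_{i,\la}$ is either a cubic monomial in $\omega$ or $U_{r_\la,\la}$ or $V_{r_\la,\la}$ times a quadratic monomial. Differences factor as $(\omega-\tilde\omega)$ times quadratic expressions, or $U_{r_\la,\la}$ times linear expressions, in $\omega,\tilde\omega$. We repeat the Hölder/Sobolev estimate \eqref{Holder} from the proof of Lemma \ref{R_aaa}, using $\|U_{r_\la,\la}\|_\infty\le C\la^{1/2}$ and $\|u\|_{L^q}\le C\la^{-1/q}\|u\|_{i,\la}$. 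This yields
\[
\|R_\la(\omega)-R_\la(\tilde\omega)\|_\la\le C\Big(\la^{-1}\big(\|\omega\|_\la^2+\|\tilde\omega\|_\la^2\big)+\la^{-\frac12}\big(\|\omega\|_\la+\|\tilde\omega\|_\la\big)\Big)\|\omega-\tilde\omega\|_\la\le C\la^{-\frac34+\theta}\|\omega-\tilde\omega\|_\la.
\]
Consequently $\|B\omega-B\tilde\omega\|_\la\le \tfrac12\|\omega-\tilde\omega\|_\la$ for $\la$ large.

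\textbf{Step 3: conclusion.} The set $E$ is closed in $E_{r_\la,\la}$, so Banach's fixed point theorem gives a unique $\omega\in E$ with $\omega=B\omega$. The a priori bound from Step 1 then improves to $\|\omega\|_\la\le C\la^{-\frac14+\frac\theta2}$, which is the claimed estimate. All constants are uniform for $r_\la\in[\la^{1-\theta},\la^{1+\theta}]$, because Lemmas \ref{l_aaa} and \ref{R_aaa} and Proposition \ref{invertibility} are uniform in this range. In particular, the resulting $\omega$ depends continuously on $r_\la$, which will be needed for the subsequent one-dimensional reduction.

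\textbf{Main obstacle.} The delicate point is that $L_\la$ is only invertible on $E_{r_\la,\la}$, not on the whole space. We therefore need surjectivity of $\Pi L_\la$, and not only the lower bound in Proposition \ref{invertibility}. Our first approach is the self-adjointness/Fredholm argument above. If that proves awkward, we would instead apply the Lax–Milgram/closed-range argument: a bounded-below self-adjoint operator has closed range and trivial cokernel, hence is onto.
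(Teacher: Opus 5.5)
Your proposal is correct and follows the paper's route: a contraction argument for $B$ on the ball $E$, driven by Lemma \ref{l_aaa}, Lemma \ref{R_aaa} and Proposition \ref{invertibility}, with the sharper bound $O(\la^{-\frac14+\frac\theta2})$ read off from $\omega=B\omega$. You are more careful than the paper in two places: the paper writes $L_\la^{-1}$ without projecting onto $E_{r_\la,\la}$ and asserts the contraction factor $\frac12$ without proof, whereas you supply the projection and self-adjointness argument and the explicit Lipschitz bound $C\la^{-\frac34+\theta}$ for $R_\la$ on $E$. One small correction: the estimate $\|(\Pi L_\la)^{-1}\|\le\rho^{-1}$ does not follow from the literal statement of Proposition \ref{invertibility}, since $\|\Pi L_\la\omega\|_\la\le\|L_\la\omega\|_\la$ goes the wrong way; it comes from that proposition's proof, which only tests against $\phi\in E_{r_\la,\la}$.
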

\begin{proof}
    For any $(\omega_1,\omega_2)\in E$, combining Lemma \ref{l_aaa} and Lemma \ref{R_aaa}, we infer
    \begin{equation}
    \begin{split}
    \label{zishen}
\|B(\omega_1,\omega_2)\|_\lambda\leq {}& C\|l_\lambda\|_\lambda+C\|R_\lambda(\omega_1,\omega_2)\|_\lambda \\
\leq{}&C\lambda^{-\frac{1}{4}+\frac{\theta}{2}}+C(\lambda^{-1}\lambda^{-\frac{3}{4}+3\theta}+\lambda^{-\frac{1}{2}}\lambda^{-\frac{1}{2}+2\theta})\\
={}&C\lambda^{-\frac{1}{4}+\frac{\theta}{2}}+C(\lambda^{-\frac{7}{4}+3\theta}+\lambda^{-1+2\theta})\\
\leq{}&\lambda^{-\frac{1}{4}+\theta}.
    \end{split}    
    \end{equation}
On the other hand, for $(\omega_1,\omega_2), (\tilde{\omega_1},\tilde{\omega_2})\in E$, 
\[\|B(\omega_1,\omega_2)-B(\tilde{\omega_1},\tilde{\omega_2})\|_\lambda\leq C\|R_\lambda(\omega_1,\omega_2)-R_\lambda(\tilde{\omega_1},\tilde{\omega_2})\|_\lambda\leq\frac{1}{2}\|(\omega_1,\omega_2)-(\tilde{\omega_1},\tilde{\omega_2})\|_\lambda.\]
Thus, $B$ is a contraction mapping from $E$ to $E$ and there exists $(\omega_1,\omega_2)\in E$ such that $(\omega_1,\omega_2)=B(\omega_1,\omega_2)$. By \eqref{zishen}, we show
\[\|(\omega_1,\omega_2)\|_\lambda=\|B(\omega_1,\omega_2)\|_\lambda=O(\lambda^{-\frac{1}{4}+\frac{\theta}{2}}).\]
This completes the proof of Proposition \ref{yasuo}.
\end{proof}

As for $(\omega_1,\omega_2)$ obtained from Proposition \ref{yasuo}, we suppose that
\[u_\lambda(r):=U_{r_\lambda,\lambda}(r)+\omega_1(r),\]
\[v_\lambda(r):=V_{r_\lambda,\lambda}(r)+\omega_2(r).\]
Then, for some constants $b_{\lambda,1}$ and $b_{\lambda,2}$, $u_\lambda(r)$ and $v_\lambda(r)$ satisfy the following equations:
\begin{align}
-u''_\lambda(r) - \frac{1}{r}u'_\lambda(r) + \bigl(\lambda + P(r)\bigr)u_\lambda(r) - \alpha u_\lambda(r)^3 - \beta u _\lambda(r)v_\lambda(r)^2 &= b_{\lambda,1} U'_{r\lambda,\lambda}(r), \label{eq:system1} \\
-v''_\lambda(r) - \frac{1}{r}v'_\lambda(r) + \bigl(\lambda + Q(r)\bigr)v _\lambda(r)- \gamma v_\lambda(r)^3 - \beta u_\lambda(r)^2 v_\lambda(r) &= b_{\lambda,2} V'_{r\lambda,\lambda}(r). \label{eq:system2}
\end{align}

We will choose the parameter $r_\lambda$ appropriately to ensure that $b_{\lambda,1} = b_{\lambda,2} = 0$. 
\begin{proposition}
 If $r_\la$ satisfies the identity
\begin{align}
\begin{split}\label{=0}
&\int_0^{+\infty} \left( -u_\lambda'' - \frac{1}{r}u_\lambda' + \bigl(\lambda + P(r)\bigr)u_\lambda - \alpha u_\lambda^3 - \beta u_\lambda v_\lambda^2 \right) r^2 u_\lambda' \, dr \\
+&\int_0^{+\infty} \left( -v_\lambda'' - \frac{1}{r}v_\lambda' + \bigl(\lambda + Q(r)\bigr)v_\lambda - \gamma v_\lambda^3 - \beta u_\lambda^2 v_\lambda \right) r^2 v_\lambda' \, dr= 0,
\end{split}
\end{align}
then $b_{\lambda,1} = b_{\lambda,2} = 0$.   
\end{proposition}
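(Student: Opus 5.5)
The plan is to substitute \eqref{eq:system1}--\eqref{eq:system2} into the left-hand side of \eqref{=0}. This turns \eqref{=0} into
\[
b_{\lambda,1}\int_0^{+\infty} r^2 U'_{r_\lambda,\lambda}u'_\lambda\,dr+b_{\lambda,2}\int_0^{+\infty} r^2 V'_{r_\lambda,\lambda}v'_\lambda\,dr=0 .
\]
This is a single scalar relation, so before using it I must reduce the two multipliers to one unknown.

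\textbf{Reducing to one multiplier.} The multipliers come from the reduction in Proposition \ref{yasuo}. There we only solved \eqref{uv} in $E_{r_\lambda,\lambda}$, which is the orthogonal complement of the \emph{one} vector $(U'_{r_\lambda,\lambda},V'_{r_\lambda,\lambda})$ for $\langle\cdot,\cdot\rangle_\lambda$. Hence the residual of the full system, viewed as an element of the dual space, is a multiple $b_\lambda$ of the functional $\langle (U'_{r_\lambda,\lambda},V'_{r_\lambda,\lambda}),\cdot\rangle_\lambda$. So $(b_{\lambda,1},b_{\lambda,2})$ is really governed by one scalar $b_\lambda$. In the normalisation of \eqref{eq:system1}--\eqref{eq:system2} I would take $b_{\lambda,1}=b_{\lambda,2}=b_\lambda$, either up to the Riesz identification or after redefining the right-hand sides through the operator $-\partial_r^2-\frac1r\partial_r+\lambda+P$, and likewise with $Q$. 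Only the fact that a single scalar multiplies a fixed pair matters below. I expect making this identification precise to be the main obstacle. If the kernel directions are written as $\big(-(U')''-\frac1r (U')'+(\lambda+P)U',\ \cdot\big)$ instead of $(U',V')$, the testing computation below changes only by terms involving $P(r)-P(r_\lambda)$ and $\frac1r$, which are lower order.

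\textbf{Nonvanishing of the coefficient.} The relation is now $b_\lambda\,\Gamma_\lambda=0$ with
\[
\Gamma_\lambda:=\int_0^{+\infty} r^2\big(U'_{r_\lambda,\lambda}u'_\lambda+V'_{r_\lambda,\lambda}v'_\lambda\big)dr .
\]
I would show $\Gamma_\lambda\neq0$. Write $u_\lambda=U_{r_\lambda,\lambda}+\omega_1$ and $v_\lambda=V_{r_\lambda,\lambda}+\omega_2$.

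First, the main term. Rescale exactly as in \eqref{rU'2}, but with weight $r^2$ in place of $r$. Since $U'$ decays exponentially and $r_\lambda\sqrt\lambda\to\infty$, this gives
\[
\int_0^{+\infty} r^2\big(|U'_{r_\lambda,\lambda}|^2+|V'_{r_\lambda,\lambda}|^2\big)dr=\frac{\alpha+\gamma-2\beta}{\alpha\gamma-\beta^2}\,A\,\lambda^{3/2}r_\lambda^2\,(1+o(1)),
\]
which is positive.

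Second, the cross terms. By Cauchy--Schwarz, and because $r\le r_\lambda+|r-r_\lambda|$ on the support scale of $U'_{r_\lambda,\lambda}$,
\[
\Big|\int_0^{+\infty} r^2 U'_{r_\lambda,\lambda}\omega_1'\,dr\Big|\le C r_\lambda^{1/2}\Big(\int_0^{+\infty} r^3|U'_{r_\lambda,\lambda}|^2dr\Big)^{1/2}\|\omega_1\|_{1,\lambda}\le C\lambda^{3/4}r_\lambda^{2}\,\|\omega_1\|_{1,\lambda}.
\]
Proposition \ref{yasuo} gives $\|\omega\|_\lambda=O(\lambda^{-\frac14+\frac\theta2})$, so this is $o(\lambda^{3/2}r_\lambda^2)$. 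The $V$ term is handled the same way.

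Hence $\Gamma_\lambda=c\,\lambda^{3/2}r_\lambda^2(1+o(1))$ with $c>0$, so $\Gamma_\lambda\neq0$ for $\lambda$ large. Therefore $b_\lambda=0$, which gives $b_{\lambda,1}=b_{\lambda,2}=0$.
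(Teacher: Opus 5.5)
You follow the paper's outline: substitute \eqref{eq:system1}--\eqref{eq:system2} into \eqref{=0} and show that the resulting coefficient has a definite sign at leading order. You depart from it at the decisive point, and your version is the sound one. The paper keeps two independent constants and runs a three-case analysis of $b_{\lambda,1}I_1+b_{\lambda,2}I_2=0$ with $I_1,I_2>0$. Its Case~3 ($b_{\lambda,1}b_{\lambda,2}\neq0$) is not a contradiction, because one scalar relation only forces $b_{\lambda,1}/b_{\lambda,2}=-I_2/I_1$. You observe that $E_{r_\lambda,\lambda}$ carries a single orthogonality condition. Reading the reduced problem as $\Pi(L_\lambda\omega-l_\lambda-R_\lambda(\omega))=0$, with $\Pi$ the $\langle\cdot,\cdot\rangle_\lambda$-projection onto $E_{r_\lambda,\lambda}$, the residual then lies in a one-dimensional space and there is only one multiplier. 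That is exactly what closes the argument, and it matches the count: one kernel direction, one parameter $r_\lambda$, one identity.

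What is wrong as written is your claim that passing from $(U'_{r_\lambda,\lambda},V'_{r_\lambda,\lambda})$ to its $\langle\cdot,\cdot\rangle_\lambda$-Riesz representative only adds lower-order terms. With $E_{r_\lambda,\lambda}$ as defined, the equations are
\[
-u_\lambda''-\tfrac1r u_\lambda'+(\lambda+P)u_\lambda-\alpha u_\lambda^3-\beta u_\lambda v_\lambda^2=b_\lambda\,\mathcal{L}_1U'_{r_\lambda,\lambda},\qquad \mathcal{L}_1:=-\partial_r^2-\tfrac1r\partial_r+\lambda+P(r),
\]
and likewise for $v_\lambda$, with $\mathcal{L}_2$ built from $Q$ and the same $b_\lambda$. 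Differentiating the profile equation and using $\alpha\frac{\gamma-\beta}{\alpha\gamma-\beta^2}+\beta\frac{\alpha-\beta}{\alpha\gamma-\beta^2}=1$ gives $-U'''+U'=3w^2U'$. Hence
\[
\mathcal{L}_1U'_{r_\lambda,\lambda}=3(\lambda+P(r_\lambda))\,w^2\big(\sqrt{\lambda+P(r_\lambda)}(r-r_\lambda)\big)\,U'_{r_\lambda,\lambda}+\big(P(r)-P(r_\lambda)\big)U'_{r_\lambda,\lambda}-\tfrac1r U''_{r_\lambda,\lambda}.
\]
So the leading part is $U'_{r_\lambda,\lambda}$ times a nonconstant weight of size $\lambda$. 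The main term of the coefficient becomes $3\varepsilon\lambda^{5/2}r_\lambda^2\int_{\mathbb{R}}w^2|w'|^2\,(1+o(1))$ rather than $\varepsilon A\lambda^{3/2}r_\lambda^2$. It is still positive, so your conclusion stands once this is checked.

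Handle the cross term $\int_0^{+\infty}r^2\omega_1'\,\mathcal{L}_1U'_{r_\lambda,\lambda}\,dr$ with the pointwise bound $|\mathcal{L}_1U'_{r_\lambda,\lambda}|\le C(\lambda^2+\lambda^{3/2}r^{-1})e^{-c\sqrt\lambda|r-r_\lambda|}$ and Cauchy--Schwarz. Do not integrate by parts, since $\omega_1''$ is not controlled. This gives $O(\lambda^{7/4}r_\lambda^{3/2}\|\omega\|_\lambda)=o(\lambda^{5/2}r_\lambda^2)$.

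Alternatively, impose the orthogonality in $L^2(r\,dr)$, i.e.\ $\int_0^{+\infty}r(\omega_1U'_{r_\lambda,\lambda}+\omega_2V'_{r_\lambda,\lambda})\,dr=0$. The invertibility argument adapts directly, the multiplier direction is then literally $(U'_{r_\lambda,\lambda},V'_{r_\lambda,\lambda})$ with one constant, and your computation of $\Gamma_\lambda$ applies verbatim.
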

\begin{proof}
    Considering \eqref{eq:system1}, \eqref{eq:system2} and \eqref{=0}, we conclude that 
    \begin{equation}
        \label{+=0}
b_{\lambda,1} \int_0^{+\infty} r^2 u_\lambda' U'_{r\lambda,\lambda} \, dr + b_{\lambda,2} \int_0^{+\infty} r^2 v_\lambda' V'_{r\lambda,\lambda} \, dr = 0.
\end{equation}

Indeed, we expand the integral as follows
\begin{align*}
\int_0^{+\infty} r^2 u_\lambda' U'_{r_\lambda,\lambda} \, dr &= \int_0^{+\infty} r^2 \left( U'_{r_\lambda,\lambda} + \omega_1' \right) U'_{r_\lambda,\lambda} \, dr \\
&= \int_0^{+\infty} r^2 U'^2_{r_\lambda,\lambda} \, dr + \int_0^{+\infty} r^2 \omega_1' U'_{r_\lambda,\lambda} \, dr.
\end{align*}
For the first term, from \eqref{rU'2}, we have the estimate
\[
\int_0^{+\infty} r^2 U'^2_{r_\lambda,\lambda} \, dr = A\frac{\gamma-\beta}{\alpha\gamma-\beta^2}r_\lambda^2\lambda^\frac{3}{2} + o\left( r_\lambda^2\lambda^\frac{3}{2} \right).
\]
For the second term, 
\begin{align*}
\int_0^{+\infty} r^2 \omega_1' U'_{r_\lambda,\lambda} \, dr= O\left(r_ \lambda \|w_1\|_{1,\lambda} \|U_{r_\lambda,\lambda}\|_{1,\lambda} \right) = O\left(r_\la^{\frac{3}{2}} \lambda^{\frac{1}{2} + \frac{\theta}{2}} \right),
\end{align*}
which implies
\begin{equation}
    \label{1xiang}
\int_0^{+\infty} r^2 u_\la' U'_{r_\lambda,\lambda} \, dr = A \frac{\gamma - \beta}{\alpha \gamma- \beta^2} r_\la^2\lambda^\frac{3}{2} + o\left(r_\la^2 \lambda^\frac{3}{2} \right) + O\left(r_\la^{\frac{3}{2}} \lambda^{\frac{1}{2} + \frac{\theta}{2}} \right).
\end{equation}
By the same argument, we obtain
\begin{equation}
    \label{2xiang}
\int_0^{+\infty} r^2 v_\la' V'_{r_\lambda,\lambda} \, dr = A \frac{\alpha - \beta}{\alpha\gamma - \beta^2} r_\la^2\lambda^\frac{3}{2} + o\left(r_\la^2 \lambda^\frac{3}{2} \right) + O\left(r_\la^{\frac{3}{2}} \lambda^{\frac{1}{2} + \frac{\theta}{2}} \right).
\end{equation}

Now, suppose that \(b_{\lambda,1}\) and \(b_{\lambda,2}\) are not both zero. We consider three cases.

Case 1: \(b_{\lambda,1} = 0\) and \(b_{\lambda,2} \neq 0\).
    From \eqref{+=0}, we have
    \[
    \int_0^{+\infty} r^2 v_\la' V'_{r_\lambda,\lambda} \, dr = 0,
    \]
    which is a contradiction since \eqref{2xiang}.
    
    Similarly, from \eqref{1xiang}-\eqref{2xiang}, we can derive that Case 2: \(b_{\lambda,1} \neq 0\), \(b_{\lambda,2} = 0\) and Case 3: \(b_{\lambda,1} \neq 0\), \(b_{\lambda,2} \neq 0\) are also contradictions.

Therefore, we can conclude that \(b_{\lambda,1} = b_{\lambda,2} = 0\).
\end{proof}

Next, we give some more precise estimates on $(u_{\la},v_{\la})$.
	
	\begin{lemma}\label{poho2estimate}
		Let $r_{\la}\in [\la^{1-\theta},\la^{1 + \theta}] $ and $\|\omega_{\la}\|_{\la} =O \Bigl(\la^{-\frac{1}{4}+\frac{\theta}{2}}\Bigr) $,
		where $\theta >0$ is a small fixed constant. Then it follows that
		\begin{equation}\label{2-30}
			\int_{0}^{+\infty} r \big[(\la+P(r))u_\la^2  + (\la+Q(r))v_\la^2 \big] dr 
			= 3\frac{\alpha+\gamma-2\beta}{\alpha\gamma-\beta^2}A \la^{\frac{3}{2}}r_{\la} + O\big(\la r_{\la} \big),
		\end{equation}
		\begin{equation}\label{2-31}
			\begin{split}
				\int_{0}^{+\infty} r (\alpha u_\la^4 + \gamma v_\la^4
				+2\beta  r u_\la^2 v_\la^2) dr 
				= 4\frac{\alpha+\gamma-2\beta}{\alpha\gamma-\beta^2}A \la^{\frac{3}{2}}r_{\la} + O\Big(\la^{1+\theta} \Big),
			\end{split}
		\end{equation}
		and
		\begin{equation}\label{2-32}
			\int_{0}^{+\infty} r^2 [P'(r)u_\la^2+Q'(r)v_\la^2] dr
			= \frac{3A}{\alpha\gamma-\beta^2} [(\gamma-\beta)P'(r_\la)+(\alpha-\beta)Q'(r_\la)] \la^{\frac{1}{2}}r_{\la}^2 + O\big(r_{\la}^2 \big).
		\end{equation}
	\end{lemma}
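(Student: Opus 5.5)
We will prove the three expansions by redoing the computations of Proposition \ref{identityprop1jisuan} and Proposition \ref{lemmapoho}, now tracking explicit error sizes under $r_\la\in[\la^{1-\theta},\la^{1+\theta}]$ and $\|\omega_\la\|_\la=O(\la^{-\frac14+\frac\theta2})$. In each integral we insert $u_\la=U_{r_\la,\la}+\omega_1$, $v_\la=V_{r_\la,\la}+\omega_2$ and separate three contributions: the main term built from $(U_{r_\la,\la},V_{r_\la,\la})$, the cross terms linear in $\omega$, and the quadratic-or-higher terms in $\omega$.

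\emph{Main terms.} Substitute $s=\sqrt{\la+P(r_\la)}(r-r_\la)$ and write $r=r_\la+s/\sqrt{\la+P(r_\la)}$. The $r_\la$ part produces the leading coefficients through Lemma \ref{pohozaev}, i.e.\ $\int w^2=3A$ and $\int w^4=4A$. By oddness of $sU^2(s)$ and $sU^4(s)$, the $s$ part is either zero or exponentially small, the latter coming from the truncation at $s=-\sqrt\la r_\la$. The expansions $(\la+P(r_\la))^{3/2}=\la^{3/2}+O(\la^{1/2})$ and $(\la+P(r))-(\la+P(r_\la))=O(|r-r_\la|)$ give corrections of order $\la^{1/2}r_\la$ and $r_\la$ respectively. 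For \eqref{2-32}, Taylor expanding gives $P'(r)=P'(r_\la)+o(1)$. The bounded-$P'$ hypothesis alone only yields $o(\la^{1/2}r_\la^2)$, and the stated $O(r_\la^2)$ requires $P'$ to be Lipschitz; I would either assume this or use the computation in \eqref{2-18}, which implicitly uses it.

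\emph{Linear terms.} For \eqref{2-30}, the cross term is $2\int r\la U_{r_\la,\la}\omega_1$. Here Cauchy--Schwarz gives only $O(\la^{3/4}r_\la^{1/2}\|\omega\|_\la)=O(\la^{1/2+\theta/2}r_\la^{1/2})$, which is $O(\la r_\la)$ since $r_\la\ge\la^{1-\theta}$. For \eqref{2-31}, the linear term $4\int r U^3_{r_\la,\la}\omega_1$ is bounded the same way by $\la^{3/4}r_\la^{1/2}\la^{-1/4+\theta/2}\le\la^{1+\theta}$ once $r_\la\le\la^{1+\theta}$ is used. If needed, the weak tail is sharpened by testing the equation for $\omega$ against $(U_{r_\la,\la},V_{r_\la,\la})$: by \eqref{eqmain_approxi}, $\int r(U^3\omega_1+\dots)$ equals $\langle(U,V),\omega\rangle_\la$ up to small errors. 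For \eqref{2-32}, the bound \eqref{2-19} gives $\la^{-1/4}r_\la^{3/2}\la^{-1/4+\theta/2}$, which is $O(r_\la^2)$ because $r_\la\ge\la^{1-\theta}$ absorbs $\la^{-1/2+\theta/2}r_\la^{-1/2}$.

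\emph{Quadratic terms.} These are handled by \eqref{2-20}, the Sobolev estimate \eqref{Holder}, and $\|\omega\|_\la^2=O(\la^{-1/2+\theta})$. They are all far smaller than the stated errors.

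The main obstacle is the bookkeeping of exponents. Each claimed remainder, $O(\la r_\la)$, $O(\la^{1+\theta})$ and $O(r_\la^2)$, must dominate every error term over the whole window $r_\la\in[\la^{1-\theta},\la^{1+\theta}]$. This is where $\theta$ small is used, together with the symmetry cancellations in the main terms.
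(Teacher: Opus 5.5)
\textbf{The gap is in \eqref{2-31}.} Your decomposition matches the paper's: leading terms by scaling plus Lemma \ref{pohozaev}, linear terms by Cauchy--Schwarz, higher-order terms by \eqref{Holder}. With it, \eqref{2-30} and \eqref{2-32} go through. You are also right that the $O(r_\lambda^2)$ in \eqref{2-32} needs $P',Q'$ Lipschitz, which \eqref{2-18} uses silently. However, your closing claim that every remainder is dominated on the whole window is false for \eqref{2-31}. It fails at the correction you yourself wrote down. Exactly, $\int_0^{+\infty} rU_{r_\lambda,\lambda}^4\,dr=(\lambda+P(r_\lambda))^{3/2}r_\lambda\int_{\mathbb R}U^4+(\text{exp.\ small})$. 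Expand the cross term $\int r U_{r_\lambda,\lambda}^2V_{r_\lambda,\lambda}^2$ the same way, and use $\alpha U^2+\beta V^2=\gamma V^2+\beta U^2=w^2$. Then the $(U,V)$-part of the left side of \eqref{2-31} equals
\[
4\varepsilon A\lambda^{3/2}r_\lambda+\frac{6A}{\alpha\gamma-\beta^2}\big[(\gamma-\beta)P(r_\lambda)+(\alpha-\beta)Q(r_\lambda)\big]\lambda^{1/2}r_\lambda+O\big(\lambda^{-1/2}r_\lambda\big).
\]
The middle term vanishes only if $(\gamma-\beta)P(r_\lambda)+(\alpha-\beta)Q(r_\lambda)=0$. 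Otherwise it has size $\lambda^{1/2}r_\lambda\ge\lambda^{3/2-\theta}$, which is not $O(\lambda^{1+\theta})$ once $\theta<\tfrac14$. So smallness of $\theta$ works against you here. As stated, \eqref{2-31} cannot be proved by any bookkeeping. The paper's proof makes the same slip in \eqref{2-33}, where $(1+P(r_\lambda)/\lambda)^{3/2}$ is replaced by $1$ inside an $O(\lambda^{1+\theta})$ remainder, so following it more closely will not repair this.

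\textbf{What is true and suffices.} \eqref{2-31} holds with remainder $O(\lambda^{1/2}r_\lambda)$, or with the factors $(\lambda+P(r_\lambda))^{3/2}$ and $(\lambda+Q(r_\lambda))^{3/2}$ kept in the leading term. That is all that is used afterwards. After dividing the balance \eqref{2-289} by $\lambda^{3/2}r_\lambda$, this term contributes only $O(\lambda^{-1})$. That is the order of the $\frac1\lambda(\cdots P+\cdots Q)$ term inside $M_\lambda'$, and it is smaller than the other remainders. You should state this corrected remainder and actually check it against the window, rather than asserting domination.

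\textbf{A smaller point, shared with the paper.} In \eqref{2-20}, the piece $\frac1\lambda\int r(r-r_\lambda)P'\lambda\omega_1^2\,dr$ is not controlled by $\|\omega_1\|_{1,\lambda}^2$ alone, because of the unbounded weight. You need exponential decay of the reduced $\omega$ away from $r_\lambda$, not just the norm bound. The decay \eqref{decay_w} is stated only for genuine solutions.
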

	
	\begin{proof}
		As for the term $\int_{0}^{+\infty}r u_{\la}^{4}d r$, direct computations give that
		\begin{align*}
			\int_{0}^{+\infty}r u_{\la}^{4} d r 
			=& \int_{0}^{+\infty}r U_{r_{\la},\la}^{4} d r + 4\int_{0}^{+\infty}r U_{r_{\la},\la}^{3} \omega_{1,\la}d r \\
			& +6\int_{0}^{+\infty}r U_{r_{\la},\la}^{2} \omega_{1,\la}^{2} d r + 4\int_{0}^{+\infty}r U_{r_{\la},\la}\omega_{1,\la}^{3}d r + \int_{0}^{+\infty}r \omega_{1,\la}^{4}d r .
		\end{align*}
		Since $U_{r_{\la},\la} \leq C\la^{\frac{1}{2}}$, then
		\[
		\int_{0}^{+\infty}r U_{r_{\la},\la}^{3} \omega_{1,\la} d r 
		= O\Big(\| U_{r_{\la},\la}\|_{1,\la}\cdot \| \omega_{1,\la} \|_{1,\la}\Big) 
		= O\Big(\la^{\frac{3}{4}}r_\la^{\frac{1}{2}} \| \omega_{1,\la} \|_{1,\la}\Big)
		= O\Big(\la^{1+\theta}\Big) ,
		\]
		\[
		\int_{0}^{+\infty}r U_{r_{\la},\la}^{2} \omega_{1,\la}^{2}d r = O\Big(\| \omega_{1,\la}\|_{1,\la}^{2}\Big)
		= O\Big(\la^{-\frac{1}{2}+\theta}\Big) .
		\]
		The H\"older inequality \eqref{Holder} gives that
		\[
		\int_{0}^{+\infty}r U_{r_{\la},\la} \omega_{1,\la}^{3}d r 
		\leq C \la^{\frac{1}{2}} 	\int_{0}^{+\infty}r  \omega_{1,\la}^{3}d r 
		= O\Big(  \la^{-\frac{1}{2}} \| \omega_{1,\la}\|_{1,\la}^{3}\Big)
		= O\Big(\la^{-\frac{5}{4}+\frac{3\theta}{2}}\Big) ,
		\]
		and 
		\[
		\int_{0}^{+\infty}r \omega_{1,\la}^{4} d r 
		=  O\Big( \la^{-1} \| \omega_{1,\la}\|_{1,\la}^{4}\Big)
		= O\Big(\la^{-2+2\theta}\Big).
		\]
		So we have
		\begin{equation}\label{2-33}
			\begin{split}
				\int_{0}^{+\infty}r u_{\la}^{4} d r 
				={}& \int_{0}^{+\infty}r U_{r_{\la},\la}^{4} d r + O\Big(\la^{1+\theta}\Big) \\
				= {}& r_\la \int_{0}^{+\infty} U_{r_{\la},\la}^{4} d r + O\Big(\la^{\frac{1}{2}}\Big) + O\Big(\la^{1+\theta}\Big) \\
				={}& \la^{\frac{3}{2}} \Big(1+\frac{P(r_\la)}{\la}\Big)^{\frac{3}{2}} r_\la \int_{-\sqrt{\la+P(r_\la)}r_\la}^{+\infty} U^4 dr + O\Big(\la^{1+\theta}\Big) \\
				={}& \Big(\frac{\gamma-\beta}{\alpha\gamma-\beta^2}\Big)^2 \la^{\frac{3}{2}} r_\la \int_{-\infty}^{+\infty} w^4 dr + O\Big(\la^{1+\theta}\Big) \\
				={}& 4 \Big(\frac{\gamma-\beta}{\alpha\gamma-\beta^2}\Big)^2 A \la^{\frac{3}{2}} r_\la  + O\Big(\la^{1+\theta}\Big),
			\end{split}
		\end{equation}
	where we used \eqref{pohozaev_single}.
		At the meantime, we can obtain that 
		\begin{equation}\label{2-34}
			\begin{split}
				\int_{0}^{+\infty}r v_{\la}^{4} d r 
				= 4 \Big(\frac{\alpha-\beta}{\alpha\gamma-\beta^2}\Big)^2 A \la^{\frac{3}{2}} r_\la  + O\Big(\la^{1+\theta}\Big),
			\end{split}
		\end{equation}
		and 
		\begin{equation}\label{2-35}
			\begin{split}
				\int_{0}^{+\infty}r u_{\la}^{2} v_{\la}^{2} d r 
				= 4 \frac{(\alpha-\beta)(\gamma-\beta)}{(\alpha\gamma-\beta^2)^2} A \la^{\frac{3}{2}} r_\la  + O\Big(\la^{1+\theta}\Big),
			\end{split}
		\end{equation}
		Hence \eqref{2-31} can be yielded by \eqref{2-33}, \eqref{2-34} and \eqref{2-35}.

		Similarly, we have
		\begin{equation}\label{2-36}
			\begin{split}
				\int_{0}^{+\infty} r \big[(\la+P(r))u_\la^2  + (\la+Q(r))v_\la^2 \big] dr  
				={}& \la r_\la \int_{0}^{+\infty}  \big( U_{r_{\la},\la}^2  + V_{r_{\la},\la}^2 \big) dr + O\big(\la r_{\la} \big) \\ 
				={}& \la^{\frac{3}{2}} r_\la \int_{-\infty}^{+\infty}  \big( U^2  + V^2 \big) dr + O\big(\la r_{\la} \big)  \\ 
				={}& 3\frac{\alpha+\gamma-2\beta}{\alpha\gamma-\beta^2}A \la^{\frac{3}{2}}r_{\la} + O\big(\la r_{\la} \big),
			\end{split}
		\end{equation}
		which yields \eqref{2-30}. Additionally, \eqref{2-32} holds by \eqref{2-11} and $\|\omega_{\la}\|_{\la} =O \Bigl(\la^{-\frac{1}{4}+\frac{\theta}{2}}\Bigr) $.
	\end{proof}

	\begin{proposition}
		If $r_{\la}\in [\la^{1-\theta},\la^{1 + \theta}]$, then \eqref{=0} is equivalent to
		\begin{equation}\label{M}
			M_{\la}'(r_{\la}) = O\big(\la^{- \frac{1}{2}}\big). 
		\end{equation}
	\end{proposition}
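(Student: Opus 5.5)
Since \eqref{=0} is just the sum of the two equations multiplied by $r^2u_\la'$ and $r^2v_\la'$ and integrated, I will redo the integrations by parts of Proposition \ref{poho} verbatim. None of those steps used that $(u_\la,v_\la)$ solves \eqref{eqmain_radial}; they used only integration by parts and the decay of $u_\la,v_\la$ from Lemma \ref{jieyoujie}. The left-hand side of \eqref{=0} is therefore identically equal to
\[
-\int_{0}^{+\infty} r\big[|u_\la'|^2+|v_\la'|^2\big]dr+\int_{0}^{+\infty} r\big[(\la+P)u_\la^2+(\la+Q)v_\la^2\big]dr+\frac12\int_{0}^{+\infty} r^2\big[P'u_\la^2+Q'v_\la^2\big]dr-\frac12\int_{0}^{+\infty} r\big(\alpha u_\la^4+\gamma v_\la^4+2\beta u_\la^2v_\la^2\big)dr .
\]
I use this form rather than \eqref{poho2}, because \eqref{identity1} is not available: $(u_\la,v_\la)$ solves only \eqref{eq:system1}--\eqref{eq:system2}. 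Then \eqref{=0} is equivalent to this combination vanishing.

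Next I plug in the expansions. Lemma \ref{poho2estimate} gives the second, third and fourth terms. For the gradient term I need an estimate with the same error quality as \eqref{2-30}. I will obtain it as in \eqref{2-7} and \eqref{rU'2}: the cross term is bounded by $\la^{\frac34}r_\la^{\frac12}\|\omega_\la\|_\la=O(\la^{\frac12+\frac\theta2}r_\la^{\frac12})$. The main part is $\frac{\gamma-\beta}{\alpha\gamma-\beta^2}A(\la+P(r_\la))^{\frac32}r_\la$ plus the analogous $V$ term, plus an error $O(\la)$.

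I must keep the factors $(1+P(r_\la)/\la)^{3/2}$ rather than expanding them. With $\tilde P:=\frac{\gamma-\beta}{\alpha+\gamma-2\beta}P+\frac{\alpha-\beta}{\alpha+\gamma-2\beta}Q$, the combination $(\gamma-\beta)(\la+P)^{3/2}+(\alpha-\beta)(\la+Q)^{3/2}$ equals $(\alpha+\gamma-2\beta)\la^{3/2}(1+\tilde P/\la)^{3/2}+O(\la^{-1/2})$. This is where $M_\la$ enters.

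Collecting terms, the identity becomes $-1+3+\frac12\cdot\frac{3}{2}\cdot\ldots-2$ in units of $\va A\la^{3/2}r_\la$. The gradient, mass and quartic contributions give $(-1+3-2)$. This cancellation is the crux, and I must therefore track the $O(1/\la)$ corrections in the mass term $\int r(\la+P)u^2$ exactly, as $(\la+P(r_\la))^{1/2}\cdot(\ldots)$, rather than with error $O(\la r_\la)$. Writing everything at the scale $\la^{3/2}r_\la(1+\tilde P/\la)^{1/2}$, the surviving leading terms should read
\[
\va A\la^{\frac12}\Big[\la\big(1+\tfrac{\tilde P(r_\la)}{\la}\big)+\tfrac32 r_\la\tilde P'(r_\la)\Big]\big(1+\tfrac{\tilde P}{\la}\big)^{\frac12}\cdot c=\text{const}\cdot\va A\la^{\frac32}M_\la'(r_\la).
\]
Here I use the computation of $M_\la'$ from the proof of Theorem \ref{diyigedingli} together with \eqref{2-32}.

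Dividing by $\la^{3/2}$, the errors are $O(r_\la^2)/\la^{3/2}$, $O(\la^{1+\theta})/\la^{3/2}$ and the gradient cross term. For $r_\la\in[\la^{1-\theta},\la^{1+\theta}]$ I need all of them to be $O(\la^{-1/2})$ after dividing also by the natural size. This is the main obstacle: the cross terms linear in $\omega$ are only $O(\la^{\frac12+\frac\theta2}r_\la^{1/2})$.

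First I would try to kill the linear cross terms exactly. Collect all terms linear in $\omega$; they form $\langle(\text{equation for }U_{r_\la,\la},V_{r_\la,\la}),\,r^2\omega'\rangle$-type expressions. Using \eqref{eqmain_approxi}, these reduce to $\int r^2(P-P(r_\la))U\omega'$ and $\int r\,U'\omega'$ terms, which are smaller by the factor $|r-r_\la|\lesssim\la^{-1/2}$. Quadratic terms contribute $\|\omega\|_\la^2 r_\la=O(\la^{-1/2+\theta}r_\la)$. Normalising by $\la^{3/2}r_\la$ and using $r_\la\sim\la$ then leaves a remainder of order $\la^{-1/2}$ up to the $\theta$-losses, which I would absorb into the statement (as the paper does) to conclude \eqref{=0}$\iff M_\la'(r_\la)=O(\la^{-1/2})$.
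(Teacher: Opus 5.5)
\textbf{The starting identity is wrong, and the error sits exactly at the step you call the crux.} When integrating by parts you kept
\[
-\int_0^{+\infty}r^2u_\la''u_\la'\,dr=\int_0^{+\infty}r|u_\la'|^2dr,
\]
but dropped the curvature term
\[
-\int_0^{+\infty}r^2\cdot\tfrac1r u_\la'\cdot u_\la'\,dr=-\int_0^{+\infty}r|u_\la'|^2dr,
\]
which cancels it. The multiplier $r^2u'$ corresponds to $x\cdot\nabla u$, and in $\mathbb{R}^2$ the gradient term of the Pohozaev identity has coefficient $\frac{N-2}{2}=0$. Without using any equation, the left-hand side of \eqref{=0} equals
\[
-\int_0^{+\infty}r\big[(\la+P)u_\la^2+(\la+Q)v_\la^2\big]dr-\frac12\int_0^{+\infty}r^2\big[P'u_\la^2+Q'v_\la^2\big]dr+\frac12\int_0^{+\infty}r\big(\alpha u_\la^4+\gamma v_\la^4+2\beta u_\la^2v_\la^2\big)dr .
\]
So \eqref{=0} is equivalent to \eqref{2-289}, and this is what the paper uses. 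You are right that \eqref{poho2} cannot be used, since \eqref{identity1} is not available for \eqref{eq:system1}--\eqref{eq:system2}.

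Your combination differs from the correct one by $-\int_0^{+\infty}r(|u_\la'|^2+|v_\la'|^2)dr\approx-\va A\la^{3/2}r_\la\neq0$, so the equivalence you start from is false. The spurious term produces your cancellation $-1+3-2=0$, which is just the one-dimensional Pohozaev identity \eqref{2-6} for $(U,V)$. After it, every term of order $\va A\la^{3/2}r_\la$ disappears, and only the $P',Q'$ term survives at the next order. That would lead to $(\gamma-\beta)P'(r_\la)+(\alpha-\beta)Q'(r_\la)\approx 0$, not to $M_\la'(r_\la)\approx0$. Your display of the ``surviving leading terms'' contains $\la(1+\tilde P/\la)$, which is of exactly the order you had just cancelled. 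So that display is asserted to match the target rather than derived.

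With the correct identity, none of your extra machinery is needed: no gradient estimate, no exact tracking of $(1+P(r_\la)/\la)^{3/2}$, and no elimination of linear cross terms. Inserting \eqref{2-30}, \eqref{2-31} and \eqref{2-32} into \eqref{2-289} gives
\[
2\va A\la^{\frac32}r_\la+3\va A\tilde P'(r_\la)\la^{\frac12}r_\la^2=O\big(\la r_\la+\la^{1+\theta}+r_\la^2\big).
\]
Equivalently, $1+\frac32\tilde P'(r_\la)\frac{r_\la}{\la}=O(\la^{-\frac12})$, up to a $\theta$-loss for $r_\la\in[\la^{1-\theta},\la^{1+\theta}]$. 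Since
\[
M_\la'(r)=\big(1+\tilde P/\la\big)^{1/2}\Big[1+\tilde P/\la+\tfrac32 r\tilde P'/\la\Big]
\]
and $\tilde P/\la=O(\la^{-1})$, this is the same as $M_\la'(r_\la)=O(\la^{-\frac12})$. In this correct version the constant $1$ comes from $2\cdot 3-4$ (mass against quartic), not from any cancellation.
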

	
	\begin{proof}
		At first, we know \eqref{2-27} and \eqref{2-271} mean the solvability of problem \eqref{eqmain}. From the proof of Proposition \ref{poho}, we see \eqref{=0} is equivalent to \eqref{2-289}.  It is easy to check that, from \eqref{2-30}, \eqref{2-31} and \eqref{2-32}, it follows that
		\begin{equation*}
			\begin{split}
				\eqref{2-289} \Longleftrightarrow {}&
				1 +  \frac{3}{2} \Big(\frac{\gamma-\beta}{\alpha+\gamma-2\beta}  P'(r_\la)+ \frac{\alpha-\beta}{\alpha+\gamma-2\beta} Q'(r_\la) \Big) \frac{r_\la}{\la}  = O\big(\la^{- \frac{1}{2}}\big) \\
				\Longleftrightarrow {}& M_{\la}'(r_{\la}) = O\big(\la^{- \frac{1}{2}}\big). 
			\end{split}
		\end{equation*}
		Thus we can deduce that \eqref{=0} is equivalent to \eqref{M}.
	\end{proof}
	
\section{Normalized solutions}\label{cz2}

\begin{proof}[\textbf{Proof of Theorem \ref{diergedingli}.}] 
By \eqref{M1}, we observe that \eqref{M} implies $|r_\la-y_\la|=O(\la^{-\frac{1}{2}}).$ Therefore, there exists
\begin{equation}\label{rla}
    r_\la\in[y_\la-\la^{- \frac{1}{2}+\theta}, y_\la+ \la^{- \frac{1}{2}+\theta} ]
\end{equation} 
solving \eqref{=0}. The existence of solutions $(u_\la,v_\la)$ to system \eqref{eqmain_without} are established for $\la\geq\la_0$, where $\la_0$ is large enough. 

We now consider the $L^2$-constraint condition. Recall $\varepsilon= \frac{\alpha + \gamma - 2\beta}{\alpha\gamma - \beta^2}$. By \eqref{pohozaev_single} and \eqref{L2improve}, $\int_{\r^2}(u^2+v^2)dx = 1$ is equivalent to 
\begin{align*}
\frac{1}{2\pi\varepsilon} &= 3A  \lambda^{\frac{1}{2}} r_\lambda + o(\lambda^{\frac{1}{2}} r_\lambda).
\end{align*}

For any $\varepsilon\leq \va_0:=\frac{1}{8\pi A\la_0^{\frac{3}{2}+\theta}}$, we define the function \( F(\lambda) \) by
\[
F(\lambda) := 3A  \lambda^{\frac{1}{2}} r_\lambda - \frac{1}{2\pi\varepsilon} + o(\lambda^{\frac{1}{2}} r_\lambda) .
\]
 It is easy to check the following inequalities for \( F(\lambda) \),
\begin{align*}
F(\lambda) &\leq 3A  \lambda^{\frac{3}{2} + \theta}  - \frac{1}{2\pi\varepsilon} 
+ o\left(  \lambda^{\frac{3}{2}+\theta} \right), \\
F(\lambda) &\geq 3A \lambda^{\frac{3}{2} - \theta}- \frac{1}{2\pi\varepsilon}
+ o\left( \lambda^{\frac{3}{2}-\theta} \right) .
\end{align*}
To ensure \( F(\lambda_1) < 0 \), we choose
\[
\lambda_1 = \left( 8A  \pi \varepsilon\right)^{-\frac{1}{\frac{3}{2} + \theta}}.
\]
Then we have
\[
F(\la_1)\leq \la_1^{\frac{3}{2}+\theta}\left[3A-\frac{1}{2\pi\varepsilon\la_1^{\frac{3}{2}+\theta}}+o(1)\right]=\la_1^{\frac{3}{2}+\theta}\left(-A+o(1)\right)<0.
\]
By the mean time, we define
\[
\lambda_2 = \left( 4A  \pi \varepsilon \right)^{-\frac{1}{\frac{3}{2} - \theta}},
\]
which satisfies \( F(\lambda_2) > 0 \). It is clear that \(\la_0\leq \lambda_1 < \lambda_2 \) since $\theta>0$ is small.
 By the intermediate value theorem, there exists \( \lambda_\va \in (\lambda_1, \lambda_2) \) such that \( F(\lambda_\va) = 0 \). 
 
 Obviously, $\la_\va\to +\infty$ as $\va\to 0$. So if the function $M_\va$ satisfies \eqref{M1} and \eqref{M2}, then we will find a concentrated vector solution $(u_{\la_\va},v_{\la_\va})$ to problem $\eqref{eqmain}$ for any $\va\in(0,\va_0]$.
 For simplicity of notations, we rewrite $(u_{\la_\va},v_{\la_\va})$  as $(u_\va,v_\va)$ (see \eqref{tongshi1}).
 According to Proposition \ref{yasuo} and \eqref{rla}, we have
 \[
 \|\omega_\va\|_{\la_\va} = O\Big( \la_{\va}^{-\frac{1}{4}+\frac{\theta}{2}}\Big)
 \text{ and } 
|r_\va-y_\va|=O\Big(\la_{\va}^{-\frac{1}{2}+\theta} \Big). 
\]
Thus, the proof of Theorem \ref{diergedingli} is complete.
 \end{proof}
\medskip
\noindent\textbf{Acknowledgments}:  
 The authors are very grateful to Professor Qing Guo from Minzu University of China for the helpful discussion with her. This paper was supported by NSFC grants (No. 12526647).
 
%
%

\end{document}